\mathchardef\mhyphen="2D
\theoremstyle{plain}
\newtheorem{thm}{Theorem}[subsection]
\newtheorem{prop}[thm]{Proposition}
\newtheorem{lem}[thm]{Lemma}
\newtheorem{cor}[thm]{Corollary}
\theoremstyle{definition}
\newtheorem{dfn}[thm]{Definition}
\newtheorem*{question*}{Question}
\theoremstyle{remark}
\newtheorem{rem}[thm]{Remark}
\tikzset{
	labls/.style={anchor=south, rotate=90, inner sep=.5mm}
}
\tikzset{
	labln/.style={anchor=south, rotate=270, inner sep=.5mm}
}
\DeclareMathOperator{\Aut}{Aut}
\DeclareMathAlphabet{\mathcalligra}{T1}{calligra}{m}{n}
\newcommand{\Q}{\mathbb{Q}}
\newcommand{\C}{\mathbb{C}}
\newcommand{\R}{\mathbb{R}}
\newcommand{\Z}{\mathbb{Z}}
\newcommand{\Oo}{\mathcal{O}}
\newcommand{\Lc}{\mathcal{L}}
\newcommand{\ad}{\mathrm{ad}}
\newcommand{\X}{\mathcal{X}} 
\newcommand{\Xo}{\mathcal{X}^\circ}
\newcommand{\cu}{\Sigma} 
\newcommand{\cuo}{\Sigma^\circ} 
\newcommand{\Bb}{\mathbf{B}} 
\newcommand{\Bbo}{\mathbf{B}^\circ} 
\newcommand{\discr}{\mathrm{discr}} 
\newcommand{\F}{\mathcal{F}}
\newcommand{\Xt}{\tilde{\mathcal{X}}}
\newcommand{\pib}{\bm{\pi}} 
\newcommand{\Hig}{\mathbf{Higgs}}
\newcommand{\btcu}{\bm{\tilde{\Sigma}}} 
\newcommand{\tcu}{\tilde{\Sigma}} 
\newcommand{\bLambda}{\bm{\Lambda}} 
\newcommand{\bsigma}{\bm{\sigma}} 
\newcommand{\Tc}{\mathcal{T}} 
\newcommand{\Tcb}{\overline{\mathcal{T}}} 
\newcommand{\Tco}{\mathcal{T}^\circ} 
\newcommand{\gfr}{\mathfrak{g}}
\newcommand{\tfr}{\mathfrak{t}} 
\newcommand{\Ub}{\bm{U}} 
\newcommand{\Ubt}{\bm{\tilde{U}}}
\newcommand{\Hit}{\bm{h}}
\newcommand{\VH}{\mathsf{V}}
\newcommand{\Db}{\bm{D}}
\newcommand{\Cc}{\mathbf{C}} 
\newcommand{\ADE}{\mathrm{ADE}}
\newcommand{\BCFG}{\mathrm{BCFG}}
\newcommand{\btSigma}{\bm{\tilde{\Sigma}}}
\newcommand{\tSigma}{\tilde{\Sigma}}
\newcommand{\Sc}{\mathcal{S}}
\newcommand{\tf}{\mathrm{tf}}
\begin{document}
\title{Hitchin and Calabi--Yau integrable systems via variations of Hodge structures}
\author{Florian Beck}
\date{\today}
\address{FB Mathematik, Universit\"at Hamburg, Bundesstrasse 55, 20146 Hamburg, Germany}
\email{florian.beck@uni-hamburg.de}

\maketitle
\begin{abstract}
Since its discovery by Hitchin in 1987, $G$-Hitchin systems for a reducitve complex Lie group $G$ have extensively been studied. 
For example, the generic fibers are nowadays well-understood. 
In this paper, we show that the smooth parts of $G$-Hitchin systems for a simple adjoint complex Lie group $G$ are isomorphic to non-compact Calabi--Yau integrable systems extending results by Diaconescu--Donagi--Pantev.
Moreover, we explain how Langlands duality for Hitchin systems is related to Poincar\'{e}--Verdier duality of the corresponding families of quasi-projective Calabi--Yau threefolds. 
Even though the statement is holomorphic-symplectic, our proof is Hodge-theoretic. 
It is based on polarizable variations of Hodge structures that admit so-called abstract Seiberg--Witten differentials. 
These ensure that the associated Jacobian fibration is an algebraic integrable system.
%\\
%Hitchin systems of semisimple complex Lie groups $G$ and compact Riemann surface $\cu$ of genus
%
%A complex integrable system determines a family of complex tori over a Zariski-open and dense subset in its base. 
%This family in turn yields an integral variation of Hodge structures of weight $\pm 1$. 
%In this paper, we study the converse of this procedure. 
%Starting from an integral variation of Hodge structures of weight $\pm 1$, we give a criterion for when its associated family of complex tori carries a Lagrangian structure, i.e. for when it can be given the structure of an integrable system. 
%This sheaf-theoretic approach to (the smooth parts of) complex integrable systems enables us to apply powerful tools from Hodge and sheaf theory to study complex integrable systems. 
%We exemplify the usefulness of this viewpoint by proving that the degree zero component of the Hitchin system for any simple adjoint or simply-connected complex Lie group $G$ is isomorphic to a non-compact Calabi--Yau integrable system over a Zariski-open and dense subset in the corresponding Hitchin base.  
%In particular, we recover previously known results for the case where $G$ has a Dynkin diagram of type ADE and extend them to the remaining Dynkin types $\mathrm{B}_k$, $\mathrm{C}_k$, $\mathrm{F}_4$ and $\mathrm{G}_2$. 
\end{abstract}

\section{Introduction}
%\subsection{the present paper}
An \emph{algebraic integrable system} is defined as a proper morphism $\pi:(M,\omega)\to B$ between a holomorphic symplectic manifold $(M,\omega)$ and a complex manifold $B$ with the following property: 
There exists a Zariski-open and dense subset $B^\circ\subset B$ such that the restriction 
\begin{equation*}
\pi^\circ:=\pi_{|M^\circ}:M^\circ \to B^\circ, \quad M^\circ:=\pi^{-1}(B^\circ)
\end{equation*}
admits a relative polarization and has connected Lagragian fibers. 
The complex version of the Arnold--Liouville theorem implies that these fibers are torsors for abelian varieties. 
%ALGEBRAIC INT SYS
%
%For simplicity, we restrict to the smooth locus and henceforth assume $B^\circ= B$. 

Two very important and prominent example classes are Hitchin systems and Calabi--Yau integrable systems. 
The total space of the former is the moduli space $\Hig(\cu,G)$ of semistable $G$-Higgs bundles on a compact Riemann surface $\cu$ of genus $\geq 2$ of degree $0$ where $G$ is any simple (more generally reductive) complex Lie group. 
The Hitchin map\footnote{Here $d_j=e_j+1$ for the exponents $e_j$ of the Lie algebra of $G$.}
\begin{equation*}
\Hit_G:\Hig(\cu, G)\to \Bb(\cu,G)\cong \bigoplus_{j=1}^r H^0(\cu, K_\cu^{\otimes d_j})
\end{equation*}
which is a global version of the adjoint quotient\footnote{Here $\gfr=\mathrm{Lie}(G)$ is the Lie algebra of $G$ and $\tfr\subset \gfr$ a Cartan subalgebra with Weyl group $W$.} $\gfr\to \tfr/W$, endows $\Hig(\cu, G)$ with the structure of an algebraic integrable system. 
It has been and still is extensively studied, see \cite{Hit1}, \cite{Hit2}, \cite{Faltings}, \cite{Donagi} for the pioneering articles .
One cornerstone of $G$-Hitchin systems is the determination of the generic fibers up to isomorphism, ultimately settled by \cite{DG} and based on the earlier works \cite{Donagi2}, \cite{Sco1}. 
The isomorphism classes of the generic fibers depend sublty on the type of $G$, whereas the isogeny classes only depends on the Lie algebra of $G$. 

Another cornerstone is Langlands duality for $G$-Hitchin systems (\cite{HauselThaddeus},\cite{DP}).
It implies an ismorphism of the Hitchin bases $\Bb(\cu, G)\cong \Bb(\cu, ^L G )$ for the Langlands dual group $^L G$ of $G$ together with an isomorphism 
\begin{equation}\label{eq:langlandsintro}
\Hig^\circ(\cu, G)^\vee \cong \Hig^\circ(\cu, ^L G)
\end{equation}
of smooth algebraic integrable systems over a Zariski-open and dense $\Bbo(\cu,G)\subset \Bb(\cu,G)$.
Here the superindex $^\vee$ stands for the dual torus fibration. 

The origin of Calabi--Yau integrable systems is in contrast very different: 
Their total spaces are the total spaces of the intermediate Jacobian fibrations $J^2(\X) \to B$ (up to a base change) of smooth families $\X\to B$ of compact or quasi-projective Calabi--Yau threefolds satisfying certain conditions. 
The fibers are by definition Griffiths' intermediate Jacobians
\begin{equation*}
J^2(X_b)=H^3(X_b,\C)/\left(F^2H^3(X_b,\C)+H^3(X_b,\Z)\right), \quad b\in B
\end{equation*}
of the members $X_b$ of the family. 

There is a discrepancy between compact and non-compact Calabi--Yau threefolds: 
First of all, $J^2(X)$ is \emph{not} an abelian variety if $X$ is a non-rigid, compact Calabi--Yau threefolds\footnote{Here a compact Calabi--Yau threefold is by definition a complex threefold with trivializable canonical bundle and $H^{1,0}(X)=0$.} but is a non-degenerate complex torus of index $1$. 
Moreover, $J^2(\X)\to B$ carries the structure of a complex integrable system (up to a base change) if the family $\X\to B$ is a \emph{complete} family of compact Calabi--Yau threefolds, see \cite[Section 2]{DM1}. 

On the other hand, the intermediate Jacobians $J^2(X)$ of quasi-projective Calabi--Yau threefolds\footnote{A quasi-projective Calabi--Yau threefold is a smooth quasi-projective threefold with trivializable canonical bundle.} $X$ are more `flexible': 
Depending on the mixed Hodge structure on $H^3(X,\Z)$, $J^2(X)$ might not even be a generalized complex torus or it might be an abelian variety, cf. \cite[Section 2.b.]{Carlson}. 
However, even if all $J^2(X_b)$ are abelian varieties for a non-trivial family $\X\to B$ of smooth quasi-projective Calabi--Yau threefolds, there are no uniform conditions on $\X\to B$ which guarantee the existence of the structure of an algebraic integrable system on $J^2(\X)\to B$.
One reason is that the deformation theory of quasi-projective Calabi--Yau threefolds is in general more complicated than in the compact case. 

The major goal of this article and its companion \cite{Beck2}, is to deepen the understanding of the relationship between $G$-Hitchin systems and Calabi--Yau integrable systems. 
Such a relationship has first been observed in \cite{DDD} and \cite{DDP} if $G$ is a simple adjoint complex Lie group of type $\mathrm{A}_1$ and $\mathrm{ADE}$ respectively. 
Our first main result extends this relationship to \emph{all} simple adjoint complex Lie groups:

\begin{thm}[= Theorem \ref{thm:main}]\label{Thm1}
Let $\Delta$ be any irreducible Dynkin diagram and $G=G_{ad}(\Delta)$ the corresponding simple adjoint complex Lie group. 
Further let $(\Delta_h,\Cc)$ be the unique pair consisting of an $\ADE$-Dynkin diagram such that $\Delta=\Delta_{h,\Cc}$ for a subgroup $\Cc\subset \Aut(\Delta_h)$. 
Then there exists a family $\bm{\pi}\colon\X\to \Bb(\cu,G)$ of quasi-projective Gorenstein threefolds \emph{endowed with a $\Cc$-action and $\Cc$-trivial canonical classes} satisfying the following:
Over a Zariski-open and dense subset $\Bb^\circ\subset \Bb(\cu,G)$ there is an isomorphism 
\begin{equation}\label{IsoIS1}
\begin{tikzcd}
J^2_{\Cc}(\X^\circ)\arrow[r, "\cong"] & \Hig^\circ(\cu, G)
\end{tikzcd}
\end{equation}
of algebraic integrable systems over $\Bbo$. 
Here $J^2_{\Cc}(\X^\circ)\subset J^2(\X^\circ)$ is determined by the $\Cc$-invariants in cohomology. 
\end{thm}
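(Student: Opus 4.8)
The plan is to exhibit both sides of \eqref{IsoIS1} as the family of complex tori $\J(\VH)$ attached to an integral variation of Hodge structures $\VH$ of weight $1$ over $\Bbo$, and then to match the two variations together with their abstract Seiberg-Witten differentials; Proposition \ref{CorAbstractSWdifferential} then upgrades this to an isomorphism of integrable systems. Concretely one has to (i) construct the family $\pi\colon\X\to\Bb(\cu,G)$ with its $\Cc$-action and verify the asserted properties, (ii) identify over a suitable $\Bbo$ the two weight-$1$ $\Z$-VHS $\VH_{\mathrm{Hit}}$ and $\VH_{\mathrm{CY}}^{\Cc}$ so that $\J(\VH_{\mathrm{Hit}})\cong\Hig_1^\circ(\cu,G)$ and $\J(\VH_{\mathrm{CY}}^{\Cc})\cong\mathcal{J}^2_{\Cc}(\X^\circ/\Bbo)$, and (iii) produce an isomorphism $\VH_{\mathrm{CY}}^{\Cc}\cong\VH_{\mathrm{Hit}}$ compatible with abstract Seiberg-Witten differentials.

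For (i), write $G_h$ for the complex Lie group with Dynkin diagram $\Delta_h$. Starting from the semiuniversal deformation of the $\Delta_h$-surface singularity, whose base is $\hfr_h/W_h$ with $\hfr_h$ a Cartan subalgebra and $W_h$ the Weyl group of $\Delta_h$, one twists by $K_\cu$ with the weights dictated by the $\C^*$-action on the singularity and pulls back along the tautological section over the Hitchin base $\Bb(\cu,G_h)=\bigoplus_i H^0(\cu,K_\cu^{d_i})$; this produces for each $b$ a quasi-projective threefold $X_b$ fibered over $\cu$ by deformed $\Delta_h$-surfaces, Gorenstein and with trivial canonical class. The diagram automorphism group $\Cc\subset\Aut(\Delta_h)$ acts on the $\Delta_h$-singularity and compatibly on its deformation, and by the folding identities $\hfr_h^{\Cc}=\hfr$, $W_h^{\Cc}=W$ the $\Cc$-fixed locus of $\Bb(\cu,G_h)$ is canonically $\Bb(\cu,G)$; restricting the threefold family there gives $\pi\colon\X\to\Bb(\cu,G)$ with its fiberwise $\Cc$-action. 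The genuinely new geometric point, beyond the $\ADE$ case of DDP, is to verify that $\Cc$ acts trivially on the dualizing sheaf of $X_b$, i.e. preserves the relative holomorphic volume form --- which is precisely what ``$\Cc$-trivial canonical classes'' records --- and this must be checked for the foldings $\mathrm{A}_{2k-1}\!\to\!\mathrm{C}_k$, $\mathrm{D}_{k+1}\!\to\!\mathrm{B}_k$, $\mathrm{D}_4\!\to\!\mathrm{G}_2$ and $\mathrm{E}_6\!\to\!\mathrm{F}_4$.

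For (ii), let $\Bbo\subset\Bb(\cu,G)$ be the complement of the discriminant together with the loci where the constructions degenerate. On the Hitchin side this is standard: $\VH_{\mathrm{Hit}}:=R^1\Hit_*\Z_{|\Bbo}$ is a weight-$1$ $\Z$-VHS with non-degenerate polarization for which the tautological $1$-form on $\Hig\subset T^*\Bun$ provides an abstract Seiberg-Witten differential, and $\J(\VH_{\mathrm{Hit}})\cong\Hig_1^\circ(\cu,G)$. On the Calabi-Yau side, since $X_b$ is quasi-projective one uses Carlson's intermediate Jacobian; the part of $H^3(X_b)$ relevant for it is of pure weight $3$, and after a Tate twist one obtains a weight-$1$ $\Z$-VHS $\VH_{\mathrm{CY}}$ with $\Cc$-action whose invariant part $\VH_{\mathrm{CY}}^{\Cc}$ satisfies $\J(\VH_{\mathrm{CY}}^{\Cc})=\mathcal{J}^2_{\Cc}(\X^\circ/\Bbo)$; the relative holomorphic volume form, combined with the Donagi--Markman residue/Yukawa construction (applied here directly, the auxiliary $\C^*$-base change already being built into the $K_\cu$-twist), supplies an abstract Seiberg-Witten differential, and $\Cc$-triviality of the canonical class is exactly what makes it $\Cc$-invariant.

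Step (iii) is the technical heart and the place where I expect the main difficulty. The Leray spectral sequence for $X_b\to\cu$ degenerates and identifies the relevant part of $H^3(X_b)$ with $H^1$ of $\cu$ in the local system of vanishing cohomology of the $\Delta_h$-surface fibers; the classical identification of that vanishing lattice with the root lattice of $\Delta_h$ --- with matching monodromy and $W_h$-action --- together with the cameral-cover description of the $G_h$-Hitchin fibers, yields an isomorphism between $\VH_{\mathrm{CY}}$ and the $G_h$-Hitchin VHS in the $\ADE$ case; this is essentially the theorem of DDP, reproved here in the present Hodge-theoretic language. Passing to $\Cc$-invariants and using $\hfr_h^{\Cc}=\hfr$, $W_h^{\Cc}=W$ together with the $\Cc$-equivariance of the vanishing-lattice identification then gives the non-simply-laced cases. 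The delicate points are: keeping track of the lattice --- rather than merely rational --- structure and of the distinction between $\tfr$ and $\tfr^{*}$ (equivalently between $G$ and its Langlands dual), which is where the restriction to the neutral component $\Hig_1$ and to adjoint $G$ enters; and checking that the two abstract Seiberg-Witten differentials, each pinned down by an explicit compatibility with the tautological section over $\Bbo$, are interchanged by the isomorphism. Once $\VH_{\mathrm{CY}}^{\Cc}\cong\VH_{\mathrm{Hit}}$ with matching differentials is established, assembling the fiberwise statements into an isomorphism of families over $\Bbo$ and invoking Proposition \ref{CorAbstractSWdifferential} gives \eqref{IsoIS1}.
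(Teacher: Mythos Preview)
Your overall strategy matches the paper's: construct the family, identify the two weight-$1$ $\Z$-VHS, match abstract Seiberg-Witten differentials, and invoke Proposition~\ref{CorAbstractSWdifferential}. The paper follows exactly this route (Theorem~\ref{ThmCY3s} for (i), Theorem~\ref{ThmVHS} for (iii), Corollary~\ref{BCFGglobalIso} for the assembly). However, you underestimate where the real difficulty in (iii) lies, and there are a couple of smaller inaccuracies.

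\textbf{The genuine gap.} You describe step (iii) as: degenerate the Leray spectral sequence for $X_b\to\cu$, identify $H^3(X_b)$ with $H^1(\cu,\text{vanishing lattice})$, match with the cameral description, then ``assemble the fiberwise statements into an isomorphism of families over $\Bbo$''. The last clause hides the core problem. Fiberwise Hodge-structure isomorphisms do \emph{not} automatically assemble into an isomorphism of VHS, because the Hodge filtration is a holomorphic datum. The paper's entire Section~\ref{SectProof} is devoted to this: the ordinary Leray spectral sequence for $\pib^\circ=\pib_2^\circ\circ\pib_1^1$ lives only in abelian sheaves, and cannot be lifted na\"ively to VHS because $R^2\pib_{1*}^1\Z$ is \emph{not} a local system (the surface fibers over the branch divisor are singular). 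The paper resolves this by passing to M.~Saito's mixed Hodge modules, where the \emph{perverse} Leray spectral sequence can be run; one must then check that perverse and ordinary $E_2$-pages agree up to shifts (Lemma~\ref{LemSimpleDecompThm} plus the intermediate-extension computation in Theorem~\ref{SaitoIntermediate}), that the spectral sequence degenerates in $\mathrm{MHM}$, and that the resulting isomorphism is compatible with $\Z$-structures via the $p_+$-perversity. Your ``delicate points'' (lattices, $\tfr$ vs.\ $\tfr^*$) are real but secondary; the Hodge-module machinery is what makes the argument go.

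\textbf{Smaller points.} Your folding assignments are swapped: with Slodowy's convention (invariants in the root space), $\mathrm{A}_{2k+1}\to\mathrm{B}_{k+1}$ and $\mathrm{D}_{k+1}\to\mathrm{C}_k$, not the reverse (see table~(\ref{FoldingDynkin})); the other convention is Langlands dual and would land you in the simply-connected case of Section~\ref{bLanglandsDual}. Also, the construction in (i) twists by a \emph{spin bundle} $L$ with $L^2=K_\cu$, not $K_\cu$ itself, because the $\C^*$-action on the Slodowy slice has doubled weights; and the paper uses the intrinsic approach (Slodowy slice in $\gfr(\Delta)$ with its $\Cc\cong AS(\Delta)$-action) rather than restricting the $G_h$-family to $\Bb(\cu,G_h)^\Cc$, though these agree (Remark~\ref{remOnAS2}).
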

The procedure to go from a pair $(\Delta_h, \Cc)$ of an irreducible $\ADE$-Dynkin diagram $\Delta_h$ and a non-trivial subgroup $\Cc\subset \Aut(\Delta_h)$ to the irreducible Dynkin diagram $\Delta=\Delta_{h,\Cc}$ is known as \emph{folding} in Lie theory (\cite[Chapter 10.3.]{Springer}, \cite[Chapter 6]{Slo}).
The subscript stands for taking $\Cc$-coinvariants in the root system corresponding to $\Delta_h$, see Appendix \ref{SectFolding} for details. 
The dual process of taking $\Cc$-invariants is depicted as follows explaining the name folding: 
\begin{figure}[h]%\label{FIgure1}
\scalebox{0.85}{
\begin{tikzpicture}
\coordinate (A) at (0,0) {}; 
\coordinate (B) at (1,0) {}; 
\coordinate (C) at (2,0) {}; 
\coordinate (D) at (3,0) {}; 
\coordinate (E) at (4,0) {}; 
\coordinate (F) at (5,0) {}; 
\coordinate (G) at (6,0) {};
\coordinate (H) at (7,0) {};
\coordinate (I) at (8,0) {}; 
\coordinate (J) at (9,0) {};
\coordinate (A5) at (-1.5,.25) {}; 
\coordinate (Cc) at (-1.5,-.25) {}; 
\coordinate (B4) at (10.5,0) {}; 
\coordinate (Bhalf) at (1.5, -0.9) {}; 
\coordinate (Hhalf) at (1.5,-1.45) {}; 

\node [fill=black, circle, inner sep=0pt, minimum size=5pt] at (A) {}; 
\node [fill=black, circle, inner sep=0pt, minimum size=5pt] at (B) {};
\node [fill=black, circle, inner sep=0pt, minimum size=5pt] at (C) {};
\node [fill=black, circle, inner sep=0pt, minimum size=5pt] at (D) {};
\node [fill=black, circle, inner sep=0pt, minimum size=5pt] at (E) {}; 
\node [fill=black, circle, inner sep=0pt, minimum size=5pt] at (H) {}; 
\node [fill=black, circle, inner sep=0pt, minimum size=5pt] at (I) {}; 
\node [fill=black, circle, inner sep=0pt, minimum size=5pt] at (J) {}; 
%\node [fill=black, circle, inner sep=0pt, minimum size=5pt] at (I) {}; 
\node at (A5) {$\Delta_h=\mathrm{A}_5$}; 
\node at (B4) {$\Delta=\mathrm{C}_3$}; 
\node at (Cc) {$\Cc=\Z/2\Z$}; 

\draw (A) -- (B);
\draw (B) -- (C); 
\draw (C) -- (D); 
\draw (C) -- (E); 
\draw [<->, shorten <= 0.15cm, shorten >=0.15cm] (A) to [in=90, out=90]  (E);
\draw [<->, shorten <= 0.15cm, shorten >=0.15cm] (B) to [in=90, out=90] (D);
%\draw [gen, <->, , thick, shorten <= 0.35cm, shorten >= 0.35cm] (A) to[in=-35, out=35, distance=.65cm] (C);
\draw (H) -- (I);
%\draw () -- (H); 
\draw[<->] (F) to (G); 

\draw(8,-0.04) -- (9,-0.04);
\draw(8,0.04) -- (9,0.04);
%(8,0.04) --++ (9,0.04);                   
%(8,0) --++ (10,0);

\draw
(8.6,0) --++  (135:.2) 
(8.6,0)  --++  (-135:.2);

\end{tikzpicture}}
\caption{Folding of $\Delta_h=\mathrm{A}_5$ to $\Delta_{h}^\Cc=\Delta=\mathrm{B}_3$.}
\label{Figure1}
\end{figure}
\\
We emphasize that our approach works for all irreducible Dykin diagrams at the same time and employs Slodowy slices (\cite{Slo}).  
In \cite{Beck2} we show that taking $\Cc$-invariants is the same as working with the global orbifold stacks $[\X/\Cc]\to \Bb$, see Remark \ref{rem:orbifolds}. 

Another goal of this article is to explain deeper aspects of $G$-Hitchin systems through Calabi--Yau integrable systems. 
We achieve this by recovering the Langlands duality statement (\ref{eq:langlandsintro}) as Poincar\'{e}--Verdier duality for the corresponding family $\Xo\to \Bbo(\cu,G)$. 
More precisely, we construct the algebraic integrable system
\begin{equation*} 
J_2^\Cc(\Xo)\to \Bbo(\cu,G)
\end{equation*} 
which is defined by taking $\Cc$-coinvariants in the homology/compactly supported intermediate Jacobian fibration $J_2(\Xo)\to \Bbo(\cu,G)$.
If $G\neq SO(2n+1,\C)$, i.e. the Dynkin diagram of the simple adjoint complex Lie group is not $\mathrm{B}_n$, then we prove in Section 6.2 that Poincar\'{e}--Verdier duality induces the isomorphism 
\begin{equation}\label{eq:isoPV}
J_2^\Cc(\Xo)\cong J^2_\Cc(\Xo)^\vee
\end{equation}
of algebraic integrable systems. 
This is equivalent to the Langlands duality isomorphism (\ref{eq:langlandsintro}), in particular $J_2^\Cc(\Xo)$ is isomorphic to $\Hig^\circ(\cu, ^L G)$ over $\Bbo$ if $^L G\neq Sp(n,\C)$. 
 
The exceptional case $G=SO(2n+1,\C)$, hence $^L G=Sp(n,\C)$, is not too surprising: 
The determination of the generic fibers of $Sp(n,\C)$-Hitchin systems is more subtle than for the other cases, see \cite[Remark 4.2]{DG}, \cite[Section 3]{DP}. 
However, we show that $J_2^\Cc(X_b)$, $b\in \Bbo(\cu, Sp(n,\C))$, is isomorphic to an abelian variety $P^\circ(b)$ which is isogenous to $P(b)\cong \Hit_{Sp(n,\C)}^{-1}(b)$. 
It plays a crucial role in determining the precise isomorphism class of $P(b)$. 
Hence we give a geometric meaning to the algebraic integrable system with generic fibers $P^\circ(b)$ if $^LG=Sp(n,\C)$. 
Note that as a corollary, we understand how $G$-Hitchin systems for any simple \emph{simply-connected} complex Lie group is related to non-compact (homology) Calabi--Yau integrable systems. 

Even though our main results are holomorphic-symplectic statements, our proofs are mainly Hodge-theoretic. 
To motivate this, observe that the smooth part $\pi^\circ:M^\circ\to B^\circ$ of an algebraic integrable system is a family of abelian varieties if it admits a section. 
Such a family is uniquely determined by the associated polarizable integral variation of Hodge structures ($\Z$-VHS) $\VH(\pi^\circ)$ of weight $1$ on $B^\circ$.  
We prove in Proposition \ref{CorAbstractSWdifferential} a partial converse of this procedure: 
If a polarizable $\Z$-VHS $\VH$ of weight $1$ with Gau\ss-Manin connection $\nabla$ admits a global section $\lambda\in H^0(B^\circ, \VH)$ such that 
\begin{equation*}
TB^\circ \to F^1\VH, \quad v\mapsto \nabla_v \lambda, 
\end{equation*}
is an isomorphism, then the family $J(\VH)=\VH_{\Oo}/(F^1+\VH_{\Z})\to B^\circ$ of abelian varieties has the structure of an algebraic integrable system. 
A section $\lambda$ with this property is called an \emph{abstract Seiberg--Witten differential}, see Section \ref{SectIntSys} for an explanation of this terminology. 
%We give a converse for VHS of weight $1$ with a special property.  
%\begin{prop} 
%\todo{abstract SW-differential}
%\end{prop}
%some words on the terminology: 
From there we proceed as follows:
\begin{enumerate}[label=\roman*)]
\item \label{i}
Determine the $\Z$-VHS $\VH_G$ of weight $1$ on $\Bbo(\cu, G)$ induced by $\Hit_G^\circ:\Hig^\circ(\cu, G) \to \Bbo$ for a simple adjoint or simply-connected complex complex Lie group $G$.
Moreover, we show that it admits an abstract Seiberg--Witten differential $\bm{\lambda}$ which determines $\Hit_G^\circ$ as an algebraic integrable system.  
This is achieved in Section 3.6.

\item \label{ii}
Construct an abstract Seiberg--Witten differential $\rho$ for the $\Z$-VHS $\VH_\Cc^{CY}$ with underlying $\Z$-local system $(R^3\bm{\pi}^\circ\Z)^\Cc_{\tf}$. 
Here $\bm{\pi}^\circ:\Xo\to \Bbo(\cu,G)$ is a family as in Theorem \ref{Thm1}. 
In particular $J_\Cc^2(\Xo)=J(\VH_{\Cc}^{CY})\to \Bbo(\cu,G)$ is an algebraic integrable system, see Section \ref{SectNcCY3s}).

The proofs of the properties of $\X\to\Bb(\cu,G)$ are contained in \cite{Beck2}.
They are not needed to understand this article. 
%Besides loc. cit. explains the relation of the present work to the families $[\X/\Cc]\to \Bb(\cu, G)$ of global orbifold stacks. 
%Determine the variation of (mixed) Hodge structures (VMHS) on $\Bbo$ induced by a family $\X\to \Bb$ of quasi-projective Gorenstein CY3s with $\Cc$-trivial canonical class as in Theorem \ref{Thm1}. 
\item \label{iii}
Construct an isomorphism $\Psi:\VH_G\to \VH^{CY}$ of $\Z$-VHS and show that it satisfies $\Psi(\bm{\lambda})=\rho$. 
In particular, it implies the isomorphism (\ref{IsoIS1}).
This is achieved in Section \ref{SectIsoHit} by using the Leray spectral sequence and a careful study how cohomology interacts with $\Cc$-invariants. 
\end{enumerate}
Let us comment on the not yet mentioned sections: 
In Section 2 we fix the notation of variations of Hodge of weight $1$ and introduce abstract Seiberg--Witten differentials. 
The first parts of Section 3 review generic Hitchin fibers and their Hodge structures focusing on the case of simple adjoint and simply-connected complex Lie groups. 
In these cases some of the general arguments simplify so that we give an almost self-contained account.  
Section 3.5 shows that the corresponding Hodge structures are determined by Zucker's Hodge structures (\cite{Zucker}).
This is crucial and eventually guarantees that the isomorphism $\Psi$ in \ref{iii} respects Hodge filtrations. 
The final Section 6 explains the relation between Poincar\'{e}--Verdier and Langlands duality.

\subsection*{Acknowledgements}
It is a pleasure to thank my advisors Katrin Wendland and Emanuel Scheidegger for suggesting this topic to me and all their support. 
Special thanks go to Ron Donagi and Tony Pantev.
Finally, I thank the anonymous referee for very helpful comments. 
This work was supported by the DFG Graduiertenkolleg 1821 "Cohomological Methods in Geometry" and through the DFG Emmy-Noether grant on "Building blocks of physical theories from the geometry of quantization and BPS states", number AL 1407/2-1.

\section{Abstract Seiberg--Witten differentials}\label{SectIntSys}
We fix our notation for variations of Hodge structures by recalling the correspondence between families of abelian varieties and polarizable integral variations of Hodge structures ($\Z$-VHS) of weight $1$. 
A $\Z$-VHS of weight $1$ is denoted by $\VH=(\VH_{\Z},F^\bullet \VH_{\Oo})$ where $\VH_{\Oo}=\VH_{\Z}\otimes_{\Z}\Oo_B$ and $F^\bullet$ is the Hodge filtration. 
The dual of $\VH$ is defined by 
\begin{equation*}
\VH^\vee:=\mathrm{Hom}_{\mathrm{VHS}}(\VH,\Z_B)(-1).
\end{equation*}
The Tate twist $(-1)$ guarantees that $\VH^\vee$ is a $\Z$-VHS of weight $1$.

To any polarizable $\Z$-VHS $\VH$ of weight $1$ we associate the family 
\begin{equation*}
J(\VH):=\mathrm{tot}\left(\VH_{\Oo}/(F^1\VH_{\Oo}+\VH_{\Z})\right)\to B
\end{equation*}
of polarizable abelian varieties, the (intermediate) Jacobians of $\VH$, over $B$. 
Here $\mathrm{tot}$ stands for the total space of the corresponding fiber bundle. 

Conversely, if $\pi:\mathcal{A}\to B$ is any family of polarizable abelian varieties, then polarizable $\Z$-VHS $\VH(\pi)$ of weight $1$ with $\VH(\pi)_{\Z}=R^1\pi_*\Z$ satisfies $J(\VH(\pi))\cong \mathcal{A}$ over $B$. 
The dual family $\pi^\vee:\mathcal{A}^\vee\to B$ is determined by $\VH(\pi)^\vee$.
Both $\pi$ and $\pi^\vee$ are fiberwise isogenous to each other. 
On the level of $\Z$-VHS of weight $1$ this corresponds to the following: 
Two polarizable $\Z$-VHS $\VH, \VH'$ of weight $1$ isogenous to each other, $\VH \sim \VH'$, if $J(\VH)$ is fiberwise isogenous to $J(\VH')$ or equivalently $\VH\otimes \Q\cong \VH'\otimes \Q$ as polarizable $\Q$-VHS of weight $1$. 

Of particular interest for us are families of abelian varieties which come from algebraic integrable systems. 
\begin{dfn}\label{DefnIntSys}
	Let $(M,\omega)$ be a holomorphic symplectic manifold and $B$ a complex manifold.
	An algebraic integrable system is a proper holomorphic map $\pi:(M,\omega)\to B$ with the following property: There is a Zariski-open dense subset $B^\circ\subset B$ such that the restriction 
	\begin{equation*}
	\pi^\circ:=\pi_{M^\circ}:M^\circ \to B^\circ,\quad M^\circ=\pi^{-1}(B^\circ),
	\end{equation*}
	has connected Lagrangian fibers and admits a relative polarization. 
\end{dfn}
It follows as in the $C^\infty$-case (the Arnold-Liouville theorem, see \cite[Chapter IV]{GS}) that the fibers of $\pi$ are torsors for abelian varieties. 
More precisely, $\pi^\circ$ is a torsor for the family $J(\VH(\pi^\circ))$ of polarizable abelian varieties over $B^\circ$.  
In particular, if $\pi^\circ$ admits a section, then $M^\circ \cong J(\VH(\pi^\circ))$ over $B^\circ$. 

The holomorphic sympelctic form $\omega$ further induces an isomorphism 
\begin{equation}\label{eq:iota}
\iota: \VH_{\Oo}/F^1\to T^*B.
\end{equation}
Indeed, $\omega$ induces the isomorphism $\ker d\pi^\circ\cong (\pi^\circ)^*T^*B^\circ,$ $v\mapsto \omega(v,-)$. 
The adjunction formula yields $\pi_*^\circ(\pi^\circ)^*T^*B^\circ\cong T^*B$ because the fibers of $\pi^\circ$ are connected. 
Moreover, $\pi_*^\circ \ker d\pi^\circ\cong \VH_{\Oo}/F^1$ by the previous discussion.
These isomorphisms combine to give (\ref{eq:iota}). 

We now address the converse and give a sufficient condition on a polarizable $\Z$-VHS $\VH$ of weight $1$ such that $J(\VH)\to B$ carries the structure of an algebraic integrable system.  
\begin{prop}\label{CorAbstractSWdifferential}
	Let $\VH$ be a $\Z$-VHS of weight $1$ over $B$, $Q$ a polarization on $\VH_\R=\VH_{\Z}\otimes_\Z \R$ and $\nabla$ the Gau\ss -Manin connection on $\VH_{\Oo}$.
	Assume there is a global section $\lambda\in \Gamma(B,\VH_{\Oo})$ such that 
	\begin{equation}\label{IsoF1VHS}
	\phi_\lambda:TB\to F^1\VH, \quad X\mapsto \nabla_X\lambda,
	\end{equation}
	is an isomorphism. 
	Further let $\iota:\VH_{\Oo}/F^1\to T^*B$ be the isomorphism induced by (\ref{IsoF1VHS}) and the polarization $Q$.
	Then the family  
	$
	J(\VH) \to B
	$
	of abelian varieties carries a unique Lagrangian structure\footnote{A Lagrangian structure on a map $\pi:M\to B$ is a holomorphic symplectic structure $\omega$ on $M$ such the smooth part of the fibers of $\pi$ are Lagrangian with respect to $\omega$.} $\omega_\lambda$ which makes the zero section Lagrangian and induces $\iota$. 
	It is independent of $Q$ up to symplectomorphisms. 
	Moreover, the same results hold true for $J(\VH')\to B$ where $\VH'$ is any VHS in the isogeny class of $\VH$, in particular for $\VH'=\VH^\vee$.  
\end{prop}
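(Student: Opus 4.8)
The plan is to produce the symplectic form by hand, as the pullback along $\iota$ of the canonical symplectic form on a cotangent bundle, and then to check that it descends to $\mathcal{J}(\VH)$. Put $n=\dim_\C B$. Since $Q$ is non-degenerate and, by the Hodge--Riemann bilinear relations, $\F^1$ is isotropic for $Q$ and of rank $n=\tfrac12\operatorname{rk}\VH$, the polarization induces a perfect pairing $\F^1\otimes(\VH_\Oo/\F^1)\to\Oo_B$; together with $\phi_\lambda\colon TB\xrightarrow{\ \sim\ }\F^1$ this is precisely the isomorphism $\iota\colon\mathcal{V}:=\VH_\Oo/\F^1\xrightarrow{\ \sim\ }T^*B$ of the statement. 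Now $\mathcal{V}$ is a rank-$n$ holomorphic vector bundle carrying $\VH_\Z$ as a fibrewise lattice, $\mathcal{J}(\VH)=\mathcal{V}/\VH_\Z$, and the quotient $q\colon\mathcal{V}\to\mathcal{J}(\VH)$ is a local biholomorphism. Viewing $\iota$ as a biholomorphism of total spaces $\mathcal{V}\xrightarrow{\ \sim\ }T^*B$, I would define the candidate Lagrangian structure to be the descent to $\mathcal{J}(\VH)$ of $\widetilde{\omega}:=\iota^{*}\omega_{\mathrm{can}}=d\theta$, where $\omega_{\mathrm{can}}=d\theta_{\mathrm{can}}$ is the Liouville form on $T^{*}B$ and $\theta=\iota^{*}\theta_{\mathrm{can}}$ is the semibasic $1$-form on $\mathcal{V}$ obtained by feeding the tautological section of $\mathcal{V}$ through $\iota$ --- the incarnation in the present abstract setting of the Seiberg--Witten / Donagi--Markman $1$-form attached to $\lambda$.

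The crux is that $\widetilde{\omega}$ is invariant under translation by $\VH_\Z$, hence descends. Since $\VH_\Z$ is locally generated by flat sections of $\VH_\C$, and since translation in $T^{*}B$ by a $1$-form $\alpha$ changes $\omega_{\mathrm{can}}$ by the pullback of $d\alpha$, it suffices to show that for each local flat section $\gamma$ of $\VH_\Z$ the $1$-form $\alpha_\gamma:=\iota(\bar\gamma)$ is closed; unwinding the definition of $\iota$ (using $\gamma$ itself as a lift of $\bar\gamma$), $\alpha_\gamma(X)=Q(\nabla_X\lambda,\gamma)$. Using that the Gau\ss--Manin connection $\nabla$ is flat, that $Q$ is $\nabla$-horizontal (it is a morphism of VHS) and that $\nabla\gamma=0$, one computes
\[
d\alpha_\gamma(X,Y)=Q\bigl(\nabla_X\nabla_Y\lambda-\nabla_Y\nabla_X\lambda-\nabla_{[X,Y]}\lambda,\ \gamma\bigr)=Q\bigl(R^{\nabla}(X,Y)\lambda,\ \gamma\bigr)=0 .
\]
Thus $\widetilde{\omega}$ descends to a closed $2$-form $\omega$ on $\mathcal{J}(\VH)$. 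Short as it is, this is the one place where the hypotheses --- the abstract Seiberg--Witten property of $\lambda$, and the existence of a flat polarization --- are genuinely used, and I expect it to be the main (if minor) obstacle.

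The remaining properties are then formal. Nondegeneracy of $\omega$ passes from $\omega_{\mathrm{can}}$ through the biholomorphism $\iota$ and the local biholomorphism $q$, so $\omega$ is symplectic; $\iota$ carries the zero section of $\mathcal{J}(\VH)$ to the ($\omega_{\mathrm{can}}$-Lagrangian) zero section of $T^{*}B$ and the fibres of $\pi$ to the (Lagrangian) fibres of $T^{*}B\to B$, so $\pi$ is a Lagrangian torus fibration, and a polarizable one since its fibres are abelian varieties (Proposition \ref{PropAVF}); hence $(\mathcal{J}(\VH),\omega)\to B$ is an algebraic integrable system in the sense of Definition \ref{DefnIntSys}. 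That $\omega$ induces $\iota$ is immediate from the construction. For uniqueness, let $\omega'$ be another Lagrangian structure making the zero section Lagrangian and inducing $\iota$; as in the $C^{\infty}$ Arnold--Liouville theorem (\cite{GS}), a proper Lagrangian submersion carries a fibrewise-translation-invariant symplectic form, so the isomorphism $T_{\mathcal{J}(\VH)/B}\xrightarrow{\ \sim\ }\pi^{*}T^{*}B$ induced by $\omega'$ is pulled back from $B$ and, by the inducing hypothesis, coincides with the one induced by $\omega$. Hence $\omega-\omega'$ annihilates every vertical vector, so it is the pullback of a closed $2$-form on $B$, which must vanish since the zero section is Lagrangian for both; thus $\omega'=\omega$.

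Finally, for $\VH'$ in the isogeny class of $\VH$ the filtered bundle $(\VH_\Oo,\F^{\bullet})$ and the section $\lambda$ are unchanged while $Q$ transports to a rational polarization, so the whole construction applies verbatim to $\mathcal{J}(\VH')$; in particular to $\VH^{\vee}(-1)$, whose associated family is the dual family $\widehat{\mathcal{J}(\VH)}$ by Proposition \ref{PropAVF}. For the independence of $Q$: if $Q'$ is a second polarization, then $\psi:=Q^{-1}Q'$ is a $Q$-self-adjoint automorphism of $\VH\otimes\Q$ preserving $\F^{1}$, and a short computation gives $\iota_{Q'}=\iota_{Q}\circ\bar\psi$, where $\bar\psi$ is the automorphism of $\mathcal{V}$ induced by $\psi$; consequently the isogeny $[\psi]$ of $\mathcal{J}(\VH)$ induced by $\psi$ satisfies $[\psi]^{*}\omega_{Q}=\omega_{Q'}$, which yields the asserted independence of $Q$ up to symplectomorphism (an honest symplectomorphism when $[\psi]$ is an isomorphism, and a finite symplectic covering in general).
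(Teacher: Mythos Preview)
Your proof is correct and follows essentially the same strategy as the paper: transport the canonical symplectic form from $T^*B$ via $\iota$, and check that it descends modulo the lattice by verifying that flat sections of $\VH_\Z$ map to closed $1$-forms on $B$. The paper establishes closedness by exhibiting a local primitive (writing $\alpha_\gamma = d\langle\gamma_0,f\rangle$ in a flat trivialization of $\nabla$), whereas you compute $d\alpha_\gamma$ directly and invoke the vanishing of the curvature of $\nabla$ together with $\nabla Q=0$ and $\nabla\gamma=0$; these are the same argument in different clothing. You also supply an explicit uniqueness argument (via translation invariance and the Lagrangian zero section) and a more careful treatment of the independence of $Q$, correctly observing that the comparison map $[\psi]$ is in general only an isogeny --- points the paper states but does not spell out.
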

\begin{proof}
	We begin by recalling how $\iota: \VH_{\Oo}/F^1\to T^*B$ is constructed.
	To this end, observe that the polarization $Q$ induces an isomorphism 
	\begin{equation*}
	\phi_Q:\VH_{\Oo}/F^1\to (F^1)^*.
	\end{equation*}
	Then $\iota$ is the composition $\iota=\phi_\lambda^\vee\circ\phi_Q$.
	These isomorphisms further induce isomorphisms (denoted by the same symbols)\footnote{If $Q$ is not defined over $\Z$, then $\phi_Q(\VH_\Z)$ is \emph{not} contained in $\VH^\vee_\Z=Hom(\VH_\Z,\Z)$. In any case, $\phi_Q(\VH_\Z)$ is a local system of lattices.}
	\begin{equation*}
	\begin{tikzcd}
	J(\VH)\ar[r, "\phi_Q"] & (F^1)^*/\phi_Q(\VH_\Z) \ar[r, "\phi_\lambda^\vee"] & T^*B/\Gamma,\quad \Gamma:= \phi_\lambda^*(\phi_Q(\VH_\Z)).
	\end{tikzcd}
	\end{equation*}
	If we show that $\Gamma\subset T^*B$ is Lagrangian, then the canonical symplectic structure $\eta$ on $T^*B$ descends to a symplectic structure $\hat{\eta}$ on $T^*B/\Gamma$. 
	The induced symplectic structure on $J(\VH)$ will satisfy all the claimed properties, in particular the zero section will be Lagrangian. 
	\\
	\\
	To show that $\Gamma\subset T^*B$ is Lagrangian, we have to prove that the image of $\VH_{\Z}$ in $T^*B$ under $\iota$ consists of closed (local) $1$-forms.
	If $\gamma$ is a local section of $\phi_Q(\VH_\Z)\subset (F^1)^*$, then its image is the local $1$-from 
	\begin{equation*}
	\phi_\lambda^\vee(\gamma)(X)=\langle \gamma, \nabla_X \lambda \rangle, \quad X\in TB,
	\end{equation*} 
	where the brackets are the duality pairing between $(F^1)^*$ and $F^1$. 
	Since $\VH^\vee_\Z$ is a local system and $\nabla$ is flat, we can represent $\gamma$ around $b\in U\subset B$ as some fixed element $\gamma_0\in \phi_\lambda^\vee(\VH_{\Z})_b$, $\nabla$ as $d$ and $\lambda$ as a map $f:U\to \VH_b$. 
	In particular, $v=\nabla_X \lambda\in \VH$ is represented by $df(X)$ where $X\in TU$. 
	It then follows that $g:U\to \C$, $g(b)=\langle \gamma_0,f(b)\rangle$, satisfies 
	\begin{align*}
	dg(X)=&\frac{d}{dt}_{|t=0} g(\alpha(t)) \\
	=&\langle \gamma_0, df(X)\rangle \\
	=&\phi_{\lambda}^\vee(\gamma)(v).
	\end{align*}
	Here $\alpha$ is a curve representing the tangent vector $X$.    
	Hence $\phi_\lambda^\vee(\gamma)$ is locally exact and therefore closed. \\ 
	Now let $Q'$ be another polarization. 
	Then the previous construction can be performed for $Q'$ as well and we denote by $\omega$ and $\omega'$ the corresponding Lagrangian structures. 
	Morever, it follows that there is an automorphism $\psi: T^*B\to T^*B$ such that $\psi(\Gamma)=\Gamma'=\phi_\lambda^\vee\circ \phi_{Q'}(\VH_\Z)$. 
	It induces a symplectomorphism $\psi:(T^*B/\Gamma, \hat{\eta})\to (T^*B/\Gamma',\hat{\eta}')$.
	Since $\omega$ and $\omega'$ on $J(\VH)$ are pull backs of $\hat{\eta}$ and $\hat{\eta}'$ respectively, it follows that $\omega$ and $\omega'$ are symplectomorphic to each other. \\
	The last statement is immediate because if $\VH\sim \VH'$ are isogenous, then $\VH'$ admits a section $\lambda\in \Gamma(B,\VH_{\Oo}')$ with the same properties as well. 
\end{proof}

\begin{dfn}
	A section $\lambda\in \Gamma(B,\VH_{\Oo})$, such that 
	\begin{equation*}
	TB\to F^1\VH, \quad X\mapsto \nabla_X \lambda, 
	\end{equation*}
	is an isomorphism as above, will be called an \emph{abstract Seiberg--Witten differential}. 
\end{dfn}

\begin{rem}
\begin{enumerate}[label=\alph*)]
\item
The previous definition is motivated by Seiberg--Witten differentials of Seiberg--Witten integrable systems, cf. \cite{DonagiSW}.
\item 
If $(\VH,\lambda)$ is a polarizable $\Z$-VHS of weight $1$, then it is shown in \cite[Section 2.2.4]{Beck-thesis} that $J(\VH)\to B$ satisfies the cubic condition of Donagi--Markman (\cite{DM1}).
This gives an alternative proof of Proposition \ref{CorAbstractSWdifferential}.
\item 
It is not difficult to generalize Proposition \ref{CorAbstractSWdifferential} to $\Z$-VHS $\VH$ of weight $1$ which admit a polarization with index $k>0$, i.e. the associated Hodge metric has index $k$. 
In this case, $(J(\VH),\omega_\lambda)\to B$ is a complex integrable system of index $k$. 
For example, compact Calabi--Yau integrable systems (\cite{DM1}) are examples of complex integrable systems of index $1$. 
We refer the reader to \cite[Section 2]{Beck-notes} for details where it is further shown that compact Calabi--Yau integrable systems admit abstract Seiberg--Witten differentials. 
\end{enumerate}
\end{rem}

\section{VHS of Hitchin systems}\label{SectHitSys}
In this section, we determine the $\Z$-VHS of weight $1$ corresponding to the smooth part of $G$-Hitchin systems.
Moreover, we determine the Lagrangian structure in terms of an abstract Seiberg--Witten differential. 
One observation is that the Hodge filtration is given by Zucker's  (\cite{Zucker}), see Lemma \ref{ApplicationZuckersVHS} which is crucial in Section \ref{ss:isoais}.
In Section \ref{ss:hitchinadjsc}, we specialize to simple adjoint complex Lie groups which are either of \emph{adjoint type} ($G=G_{ad}$) or \emph{simply connected} ($G=G_{sc}$).
These are the relevant cases for us. 
Even though these cases could be extracted from \cite[Section 3]{DP}, we give essentially self-contained proofs for the convenience of the reader. 

\subsection{Hitchin systems}
We briefly recall the construction of $G$-Hitchin systems for any simple complex Lie group $G$ and compact Riemann surface $\cu$ of genus $g(\cu)\geq 2$. 
For a more extensive treatment, see \cite{Hit1}, \cite{Hit2}, \cite{Faltings}, \cite{Donagi}, \cite{DG}, \cite{DP}.

A $G$-Higgs bundle on $(P,\varphi)$ on $\cu$ is a $G$-bundle $P$ on $\cu$ together with a section $\varphi\in H^0(\cu,\mathrm{ad}(P)\otimes K_\cu)$ where $K_\cu$ is the canonical bundle and $\mathrm{ad}(P)$ is the holomorphic vector bundle associated with the adjoint representation of the Lie algebra $\gfr$ of $G$.
The moduli space $\Hig(\cu,G)$ of semistable $G$-Higgs bundles \emph{of degree $0$} on $\cu$ is a quasi-projective variety of dimension $2\dim(G)(g(\cu)-1)$. 
Its smooth locus $\Hig(\cu,G)^{sm}$ carries a holomorphic symplectic form $\omega_H$. 

To construct the $G$-Hitchin system, fix a maximal torus $T\subset G$ with corresponding Cartan subalgebra $\tfr\subset \gfr=\mathrm{Lie}(G)$ of rank $r$, root system $R$ and Weyl group $W$.
If the canonical bundle $K_\cu$ of $\cu$ is considered as a $\C^*$-bundle, define the associated bundles
\begin{align*}
&u:\Ub:=K_\cu\times_{\C^*}\tfr/W\to \cu, \\
&\tilde{u}:\Ubt:=K_{\cu}\times_{\C^*}\tfr\to \cu.
\end{align*}
The adjoint quotient $\chi:\gfr\to \tfr/W$ globalizes to give a well-defined morphism
\begin{equation*}
\Hit=\Hit_G: \Hig(\cu,G)\to H^0(\cu,\Ub),\quad [P,\varphi]\mapsto \chi(\varphi).
\end{equation*}
We abuse notation and denote by $\Hit$ the restriction of $\Hit$ to the smooth locus $\Hig(\cu,G)^{sm}$.
Then the $G$-Hitchin system is given by 
\begin{equation*}
\Hit:(\Hig(\cu,G)^{sm},\omega_H)\to \Bb(\cu,G):=H^0(\cu, \Ub)
\end{equation*}
which is an algebraic integrable system in the sense of Definition \ref{DefnIntSys}. 

To investigate the $\Z$-VHS determined by $\Hit$ over an open dense subset of $\Bb=\Bb(\cu,G)$, we introduce cameral curves. 
Since the $W$-action on $\tfr$ commutes with the natural $\C^*$-action, $\Ubt$ inherits a $W$-action. 
% observe that each root $\alpha$ of $\tfr$ yields a section 
%\begin{equation*}
%\bm{\alpha}:\Ubt\to \tilde{u}^*K_\cu
%\end{equation*}
%and hence a reflection $s_{\bm{\alpha}}:\Ubt\to \Ubt$ with fixed point set $\Ubt_{\alpha}=K_\cu\times_{\C^*}\tfr_{\alpha}$.  
The quotient map $q:\tfr\to \tfr/W$ glues to the morphism $\bm{q}:\Ubt\to \Ub$. 
%Finally, the element $\prod_{\alpha\in R} \alpha\in \C[\tfr]^W$ induces the section
%\begin{equation}\label{SectionSbr}
%s_{\bm{br}}:\Ub \to u^*K_{|R|}.
%\end{equation}
The universal cameral curve is defined by the cartesian square
\begin{equation}\label{UniversalCameralCurve}
\begin{tikzcd}
\btcu\arrow[r] \arrow[d, "\bm{p}_1"] \arrow[dd, bend right=50, "\bm{p}"'] & \Ubt \arrow[d, "\bm{q}"] \\
\cu\times \Bb \arrow[r, "ev"] \arrow[d, "\bm{p}_2=pr"] & \Ub \\ 
\Bb.
\end{tikzcd}
\end{equation}
By construction $\btcu$ inherits a $W$-action. 
The pullback $\tcu_b:=i_b^*\btcu$ via the inclusion $i_b:\cu\to \{b\}\times \cu$ is the cameral curve $\tcu_b\hookrightarrow \Ubt$ corresponding to $b\in \Bb$. 
We denote by
\begin{equation*}
p_b:\tcu_b\to \cu 
\end{equation*}
the induced projection. 
For generic $b\in \Bb$ these curves are non-singular and $p_b$ is a simply ramified $W$-Galois covering. 
More precisely, let 
\begin{equation}\label{DefBbo}
\Bbo:=\{b\in \Bb~|~ b\text{ transversal to }\discr(\bm{q})^{sm} \}
\end{equation}
where $\discr(\bm{q})^{sm}$ denotes the smooth locus of the discriminant $\discr(\bm{q})$ of $\bm{q}$. 
Then $\Bbo\subset \Bb$ is a non-empty Zariski-open subset and comprises the locus of smooth cameral curves with simple Galois ramification (\cite[Section 1]{Sco1}). 
Moreover, $\Bbo\subset \Bb$ is contained in the smooth locus of $\Hit:\Hig(\cu,G)\to \Bb$ and the restriction satisfies
\begin{equation*}
\Hig^\circ(\cu,G):=\Hit^{-1}(\Bbo)\subset \Hig(\cu, G)^{sm}.
\end{equation*}

\subsection{Stratifications}\label{SectStratifications}
In the following we work with $\Bbo$ rather than the full Hitchin base $\Bb$. 
It is therefore useful to restrict $\Ub$ to a smaller open subset $\Ub^1\subset \Ub$ through which the evaluation map $ev$ factorizes. 
Consider the open subsets 
\begin{align*}
&\tfr^1=\tfr-\bigcup_{\alpha \neq \beta} \tfr_{\alpha}\cap \tfr_{\beta}\subseteq \tfr,  \\
&\tfr^1/W:=q(\tfr^1)\subseteq \tfr/W.
\end{align*}
They are stratified by 
\begin{align*}
&\tfr^1=\tfr^\circ \cup D:=\tfr^\circ\cup \left(\bigcup_{\alpha\in R} \tfr_{\alpha}- \cup_{\beta\neq \gamma} \tfr_{\beta}\cap \tfr_{\gamma}     \right),\\  
&\tfr^1/W=\tfr^\circ/W\cup  (D/W), 
\end{align*}
where $\tfr^\circ\subset \tfr$ are the regular elements. 
Note that $D=\tfr^1-\tfr^\circ$ decomposes into $D=D^s\cup D^l$ where $D^s$ and $D^l$ corresponds to short and long roots respectively. 
%\begin{equation*}
%q:\tfr^1\to \tfr^1/W
%\end{equation*}
%is a stratified map and $\discr(q)^{sm}=D$. 
Of course, $\tfr^1$ and $\tfr^1/W$ together with their stratifications can be glued to give
\begin{align}
&\Ubt^1=\Ubt^\circ\cup \Db=\Ubt^\circ\cup \Db^s\cup \Db^l, \label{Ubt1} \\
&\Ub^1=\Ub^\circ\cup (\Db/W)=\Ub^\circ \cup (\Db^s/W) \cup (\Db^l/W) \label{Ub1}
\end{align}
and $\discr(\bm{q}_{|\Ubt_1})=\Db$. 
Hence $\Bbo$ are precisely the sections $b:\cu\to \Ub^1$ that intersect $\Db$ transversally. 
Note that every $b\in \Bbo$ necessarily intersects $\Db$ by the compactness of $\cu$. 

The restriction $\bm{p}^\circ:\btSigma^\circ \to \Bbo$ of the universal cameral curve to the preimage of $\Bbo$ factorizes as
$
\bm{p}=\bm{p}^\circ_2\circ \bm{p}^1_1.
$
Here $\bm{p}^\circ_2=pr:\cu\times\Bbo\to \Bbo$ is the projection and $\bm{p}_1^1:\btSigma^\circ\to \cu\times \Bbo$ is obtained by restricting the cartesian square in (\ref{UniversalCameralCurve}).

\subsection{Generic Hitchin fibers}
We next express the fiber $\Hit^{-1}(b)$, $b\in \Bbo$, in terms of the smooth cameral curves $\tcu_b$ following \cite{DG}, \cite{DP}. 
To do so, consider the sheaf 
\begin{equation*}
\Tcb(b):=\Tcb:=p_{b,*}^W(\bLambda\otimes \mathcal{O}_{\tcu_b}^*), 
\end{equation*}
on $\cu$ for $b\in \Bbo$. 
Here
$
\bLambda_G=\bLambda=\mathrm{Hom}(\C^*, T) 
$
is the cocharacer lattice of $G$ and
\begin{equation*}
p_{b,*}^W=(.)^W\circ p_{b,*}
\end{equation*}
is the equivariant direct image for the diagonal $W$-action. 
If $D^\alpha\subset \tcu_b$ is the ramification divisor corresponding to $\alpha \in R$, then we define the subsheaf
\begin{equation*}
\Tc(U):=\{ t\in \Tcb(U)~|~\alpha(t)_{|D^\alpha}=+1~\forall \alpha\in R \}
\end{equation*}
of $\Tcb$.
Here we consider each root $\alpha\in R$ as a homomorphism $\alpha:T\to \C^*$ and use the canonical identification $\bLambda\otimes \C^*= T$. 
Further let $\Tco\subset \Tc$ be the connected component of $\Tc$. 
By definition, these three sheaves satisfy 
\begin{equation*}
\Tco\subset \Tc\subset \Tcb.
\end{equation*} 

It is proven in \cite[Corollary 4.6]{DG} that $\Hit^{-1}(b)$, $b\in \Bbo$, is a torsor for $H^1(\cu, \Tc)$.
To determine the connected component $P$ of $H^1(\cu,\Tc)$ it is useful to consider the connected components $P^\circ$ and $\overline{P}$ of $H^1(\cu,\Tco)$ and $H^1(\cu,\Tcb)$ respectively.
If the point $b\in \Bbo$ and the structure group $G$ is relevant, we write $Q_G(b)$ for $Q=P^\circ,P, \overline{P}$.

\begin{prop}\label{PrymsComplexStr}
Let $b\in \Bbo$. 
Then $P^\circ(b)$, $P(b)$ and $\overline{P}(b)$ are abelian varieties which are all isogenous to each other. 
Any of these abelian varieties $Q$ is in turn isogenous to the abelian variety 
\begin{equation}\label{eq:A}
J(H^1(\tcu_b, \bLambda)^W). 
\end{equation}
In particular, the complex structure of $Q$ is determined by the Hodge filtration $F^\bullet H^1(\tcu, \tfr)^W$ under the canonical identification $\bLambda\otimes \C=\tfr$.
\end{prop}

\begin{proof}
	The first claim already appeared in \cite{DP} but we need to elaborate on their proof. 
	To simplify the presentation, we drop $b$ from the notation. 
	
	Consider the Grothendieck spectral sequence 
	\begin{equation*}
	 R^pa_* R^qp_*^W \F \Rightarrow R^{p+q}\tilde{a}_*^W \F
	\end{equation*}
	for the composition $a_*\circ p_*^W=\tilde{a}^W_*$ where $a:\cu\to pt$ and $\tilde{a}:\tcu\to pt$ are the constant maps. 
	Note that $\tilde{a}^W_*(\F)=(.)^W\circ \tilde{a}_*(\F)=H^0(\tcu, \F)^W$ for any $W$-sheaf $\F$ on $\tcu$. 
	The corresponding five-term exact sequence of this spectral sequence reads as 
	\begin{equation}\label{Seq5TermsAVs}
	\begin{tikzcd}
	0\ar[r] & H^1(\cu, p_*^W\F) \ar[r, "\gamma"] & H^1(\tcu, \F)^W \ar[r]  \arrow[d, phantom, ""{coordinate, name=Z}] & H^0(\cu, R^1p_*^W\F) 
	\arrow[dll, rounded corners, to path={ -- ([xshift=2ex]\tikztostart.east)
		|- (Z) [near end]\tikztonodes
		-| ([xshift=-2ex]\tikztotarget.west) -- (\tikztotarget)}] \\
	& H^2(\cu, p_*^W\F)\ar[r] & H^2(\tcu, \F)^W. 
	\end{tikzcd}
	\end{equation}
	Since $p$ is a finite map, $R^1p_*^W\F$ is isomorphic to $\mathcal{H}^1(W,p_*\F)$ (see \cite[Section 5]{Gro1}).
	The latter sheaf has stalks $H^1(W,(p_*\F)_x)$ which is finite because $H^k(W,M)$ is finite for $k\geq 1$ and any $W$-module $M$. 
	As $\mathcal{H}^1(W,p_*\F)$ is a local system on $\cuo=\cu-Br_b$, $H^0(\cu, R^1p_*^W\F)$ is finite. 
	It follows that (dropping $b$ from the notation)
	\begin{equation*}
	\overline{P}=H^1(\cu, p_*^W\F)^\circ, \quad \F=\bLambda\otimes \mathcal{O}_{\tcu},
	\end{equation*} 
	is an abelian variety. 
	Indeed, it is classical that the connected component of $H^1(\tcu, \bLambda\otimes \mathcal{O}^*_{\tcu})^W$ is the abelian variety $J$ of (\ref{eq:A}).
	Restricting $\gamma$ of (\ref{Seq5TermsAVs}) to the connected components shows that $\gamma^\circ:\overline{P}\to \mathcal{A}$ is injective with finite cokernel, i.e. is an isogeny. 
	In particular, $\overline{P}$ carries the structure of an abelian variety. 
	\\
	To prove the statement for $P^\circ$ and $P$, consider the exact sequences
	\begin{equation*}
	\begin{tikzcd}
	0 \ar[r] & \Tco \ar[r] & \Tc \ar[r] & \Tc/\Tco \ar[r] & 0,
	\end{tikzcd}
	\end{equation*}
	\begin{equation*}
	\begin{tikzcd}
	0 \ar[r] & \Tc \ar[r] & \Tcb \ar[r] & \Tcb/\Tc \ar[r] & 0.
	\end{tikzcd}
	\end{equation*}
	Note that the quotients are supported on the branch locus of $\tcu\to \cu$, i.e. they are (sums of) skyscrapers. 
	The corresponding long exact sequences show that each of the natural maps $H^1(\cu, \Tco)\to H^1(\cu, \Tc)\to H^1(\cu, \Tcb)$ is surjective with finite kernel. 
	Hence the restrictions $P^\circ \to P \to \overline{P}$ are isogenies and are therefore isogenous to $J$.
\end{proof}

\begin{cor}\label{cor:Hodge}
Let $b\in \Bbo$.
The finitely generated abelian groups 
\begin{equation}\label{eq:cocharPoPbar}
H^\circ_{G,\Z}=H^1(\cu, p_{b,*}^W \bLambda_G),\quad \overline{H}_{G,\Z}=H^1(\cu, p_{b,*}^W\bLambda_G^\vee)^\vee
\end{equation}
carry polarizable Hodge structures $H^\circ_G$ and $\overline{H}_G$ of weight $1$ such that there are canonical isomorphisms
$J(H^\circ)\cong P_G^\circ,$ $J(\overline{H})=\overline{P}_G$.
\end{cor}
\begin{proof}
As real tori, the abelian varieties $Q=P_G^\circ,$ $\overline{P}_G$ are determined by $\mathsf{cochar}(Q)\otimes S^1$. 
The cocharacter lattice of $Q$ has been deteremined in \cite[Claim 3.6]{DP} as in (\ref{eq:cocharPoPbar}). 
The complex structure is recovered by Proposition \ref{PrymsComplexStr} which identifies the universal covering of $Q$ with $H^1(\tcu_b, \tfr)^W$. 
In particular, the corresponding Hodge structure $H_Q=(H_{\Z},F^\bullet H_{\C})$ of weight $1$ with $J(H_Q)\cong Q$ is given by 
\begin{equation}
H_{\Z}=\mathsf{cochar}(Q),\quad F^\bullet H_{\C}=F^\bullet H^1(\tcu_b,\tfr)^W.
\end{equation}
\end{proof}

\subsection{Adjoint and simply-connected groups}\label{ss:hitchinadjsc}
Let $G_{ad}=G_{ad}(\Delta)$ and $G_{sc}=G_{sc}(\Delta)$ be the simple adjoint and simply-connected complex Lie group respectively with fixed Dynkin diagram $\Delta$. 
The considerations of the previous section simplify as we explain next. 

The cocharacter lattices $\bLambda_{ad}=\bLambda_{G_{ad}}$ and $\bLambda_{sc}=\bLambda_{G_{sc}}$ satisfy
\begin{equation}\label{eq:adjoint}
\mathsf{coroot}_\gfr=\bLambda_{sc} \subsetneq \bLambda_{ad}=\mathsf{coweight}_\gfr\subset \tfr^\vee
\end{equation}
Here $\mathsf{coroot}_{\gfr}=\langle R^\vee\rangle_{\Z}$ is the coroot lattice and $\mathsf{coweight}_{\gfr}$ is the coweight lattice of $\gfr$.  
The corresponding character lattices $\bLambda_{ad}^\vee$ and $\bLambda_{sc}^\vee$ satisfy
\begin{equation}\label{eq:simplyconnected}
\mathsf{root}_\gfr=\bLambda_{ad}^\vee \subsetneq \bLambda_{sc}^\vee=\mathsf{weight}_{\gfr}\subset \tfr
\end{equation}
We denote by $\langle \bullet , \bullet \rangle: \tfr \times \tfr^\vee\to \C$ the natural pairing.
%If we need to emphasize the Dynkin diagram, we write $\bLambda_{ad}(\Delta)$, $\bLambda_{sc}(\Delta)$ etc. 

Since the Hitchin base only depends on the common Lie algebra $\gfr$, we canonically identify $\Bb=\Bb(\cu,G_{ad})=\Bb(\cu,G_{sc})$ and similarly for $\Bbo$, cf. (\ref{DefBbo}).
Let $P^\circ_{ad},$ $P_{ad},$ $\overline{P}_{ad}$ and $P^\circ_{sc},$ $P_{sc}$, $\overline{P}_{sc}$ be the abelian varieties of Lemma \ref{PrymsComplexStr} at $b\in \Bbo$ for the structure groups $G_{ad}$ and $G_{sc}$ respectively. 

\begin{prop}\label{p:cochar}
	With the previous notation there are the following canonical isomorphisms
	\begin{equation}\label{eq:isoPoP}
	P^\circ_{ad}\cong P_{ad}, \quad P_{sc}\cong \overline{P}_{sc}
	\end{equation}
	of abelian varieties. 
	Additionally, there are the special cases
	\begin{equation}\label{eq:allIso}
	\begin{aligned}
	&P_{ad}^\circ= P_{ad}=\overline{P}_{ad}\quad &\mathrm{if } ~~  \Delta\neq \mathrm{B}_k,
	\\
	&P_{sc}^\circ=P_{sc}=\overline{P}_{sc}\quad &\mathrm{if } ~~ \Delta\neq \mathrm{C}_k. 
	\end{aligned}
	\end{equation}
\end{prop}	
\begin{rem}\label{rem:hodgep}
By Corollary \ref{cor:Hodge}, the $\Z$-Hodge structure $H_G$ of weight $1$ with $P_G\cong J(H_G)$ is $H^\circ_G$ or $\overline{H}_G$ in case $G$ is of adjoint type or simply-connected. 
\end{rem}

\begin{proof}
	We first prove (\ref{eq:isoPoP}).
	This follows if $H^1(\cu,\mathcal{T}_{ad})\cong H^1(\cu,\mathcal{T}_{ad}^\circ)$ and $H^1(\cu, \mathcal{T}_{sc})\cong H^1(\cu, \overline{\mathcal{T}}_{sc})$ respectively. 
	
	To this end, it is convenient to introduce the real versions $\mathcal{F}_{\R}$ of $\mathcal{F}=\Tco,$ $\Tc$ and $\overline{\Tc}$ (dropping $ad$ or $sc$ from the notation if unnecessary).
	These are defined by replacing $\mathcal{O}_{\tcu_b}^*$ by the constant sheaf for the circle group $S^1$ in their definition.
	Explicitly, $\Tco_{\R}=(p_{*}^W\bLambda)\otimes S^1$ and $\overline{\Tc}_{\R}=p_{*}^W( \bLambda\otimes S^1)$. 
	As in the complex case, we canonically identify $\bLambda\otimes S^1=T_{\R}$, the compact real torus inside the maximal torus $T$. 
	
	The relevance of the real versions $\mathcal{F}_{\R}$ comes from the fact that the natural morphism $\mathcal{F}_{\R}\hookrightarrow \mathcal{F}$ induces an isomorphism
	\begin{equation}\label{eq:h1realversion}
	H^1(\cu,\mathcal{F}_{\R})\cong H^1(\cu, \mathcal{F}),
	\end{equation}
	see \cite[Lemma 3.2]{DP}.
	Let $s\in \cu$ be a branch point of $p$ and $\alpha$ a root in the $W$-orbit corresponding to $s$. 
	Then the stalks of these sheaves are easily computed to be
	\begin{align*}
	\overline{\Tc}_{\R,s}&=\{ \lambda\otimes z ~|~ \alpha^\vee(z^{\langle \alpha, \lambda \rangle })= 1\in T_{\R} \}, 
	\\
	\Tc_{\R,s}&=\{ \lambda\otimes z ~|~ z^{\langle \alpha, \lambda \rangle }=1 \in S^1  \}, 
	\\ 
	\Tco_{\R,s}&=\{ \lambda\otimes z ~|~\langle \alpha, \lambda \rangle=0 \in \Z  \}.
	\end{align*}
	In the adjoint case, let $T_{\R}=\bLambda\otimes S^1 \ni \lambda\otimes z\neq 1$ with $z^{\langle \alpha, \lambda \rangle}=1$ and denote $k=\langle \alpha, \lambda \rangle\in \Z$. 
	By (\ref{eq:adjoint}), there exists $\lambda_0$ such that $\langle \alpha, \lambda_0\rangle =1$ and $\lambda=k \lambda_0$. 
	Since $z^k=1$ by assumption, we conclude a contradiction
	\begin{equation*}
	\lambda \otimes z= k \lambda_0 \otimes z =\lambda_0 \otimes z^k=1\in T_{\R}.
	\end{equation*}
	Consequently, $\Tco_{ad,\R}=\Tc_{ad,\R}$ so that $P^\circ_{ad}\cong P_{ad}$ as real tori. 
	But both are isogenous to $J(H^1(\tcu_b,\bLambda_{ad})^W)$ by Lemma \ref{PrymsComplexStr} so that $P^\circ_{ad}\cong P_{ad}$ as abelian varieties. 
	
	In the simply connected case, observe that each coroot $\alpha^\vee$, seen as a cocharacter $\alpha^\vee:\C^*\to T\subset G_{sc}$, must be injective: 
	By (\ref{eq:simplyconnected}) there exists a character $\mu\in \bLambda_{sc}^\vee$ such that $\mu\circ \alpha^\vee(t)=t$ for all $t\in \C^*$. 
	Hence $P_{sc}\cong \overline{P}_{sc}$ as real tori by (\ref{eq:h1realversion}). 
	As before, it follows that this is in fact an isomorphism of abelian varieties. 
	
	To prove the first equalities in (\ref{eq:allIso}), note that each coroot is injective, i.e. there exists a root $\beta$ such that $\langle \beta, \alpha^\vee \rangle=1$, if the Dynkin diagram $\Delta\neq \mathrm{B}_k$ so that $\Tc_{ad,\R}=\overline{\Tc}_{ad,\R}$.
	Interchanging roots with coroots, the same argument gives $\Tco_{sc,\R}=\Tc_{sc,\R}$ if $\Delta\neq \mathrm{C}_k$. 
\end{proof}
For the next corollary, let $^L G$ be the Langlands dual of a simple complex Lie group $G$. 
By definition, we have $\bLambda_{[^L G]}=\bLambda_G^\vee,$ $\bLambda_{[^L G]}^\vee=\bLambda_G$. 
As shown in \cite[Theorem A (1)]{DP}, there is an isomorphism $\mathsf{I}:\Bb(\cu, G) \to\Bb(\cu, ^L G)$. It is unique up to multiplication with $\C^*$ and satisfies $\mathsf{I}(\Bbo(\cu,G))=\Bbo(\cu, ^L G)$. 
Moreover, it lifts to an isomorphism of universal cameral curves. 
We therefore make no notational distinction between the Hitchin base and the cameral curves for $G$ and $^L G$ in the following. 

The next corollary is a special case of \cite[Theorem A (2)]{DP}. 
\begin{cor}\label{cor:langlandsadsc}
Let $G=G_{ad}$ or $G_{sc}$. 
%Let $P$ and $^L P$ be the corresponding abelian variety of Lemma \ref{PrymsComplexStr} at $b\in \Bbo(\cu, G)$ for the structure group $G$ and $^L G$ respectively. 
Then there is a canonical isomorphism 
\begin{equation*}
P_{^L G}\cong P_G^\vee.
\end{equation*}
\end{cor}
\begin{proof}
It is sufficient to consider the case $P=P_{ad}=P_{ad}^\circ$.
Let $G=G_{ad}$ be the simple adjoint complex Lie group with Dynkin diagram $\Delta$ so that $^L G=G_{sc}(^L \Delta)$. 
Since $\bLambda_G^\vee=\bLambda_{^L G}$, Corollary \ref{cor:Hodge} and Proposition \ref{p:cochar} imply 
\begin{align*}
\mathsf{cochar}(^L P_G)=H^1(\cu, p_{b,*}^W \bLambda_{^L G}^\vee)^\vee=  H^1(\cu, p_{b,*}^W\bLambda_G)^\vee=\mathsf{cochar}(P_G)^\vee.
\end{align*}
\end{proof}

%In the following let $P$ either be $P_{ad}$ or $P_{sc}$ and $\bLambda$ correspondingly $\bLambda_{ad}$ or $\bLambda_{sc}$.

\subsection{Relation to Zucker's Hodge structure}\label{ss:relzucker}
In Section \ref{SectIsoHit} we need an alternative construction of the Hodge structure on $H^1(\cu,p^W_{b,*}\bLambda)_{\mathrm{tf}}$, $b\in \Bbo$
This construction is based on Zucker's results \cite{Zucker}. 
The general setup is the following: 

Let $C$ be any compact Riemann surface and $j:C^\circ\hookrightarrow C$ the complement of finitely many points. 
If $\VH$ is a polarizable $\Z$-VHS of weight $m$ on $C^\circ$, then Zucker (\cite[Theorem 7.12]{Zucker}) constructed a  polarizable $\Z$-Hodge structure of weight $k+m$ on the sheaf cohomology groups $H^k(C,j_*\VH)_{\mathrm{tf}}$, $k=0,1,2$. 
It is compatible with Tate twists and functorial with respect to morphisms of Riemann surfaces and morphisms of $\Z$-VHS. 
%Let $\Sigma$ be a compact Riemann surface and $j:\Sigma^\circ=\Sigma-S\hookrightarrow \Sigma$ the complement of a finite subset $S\subset \Sigma$. 
%If $\VH$ is a polarizable $\Z$-VHS of weight $m$ over $\Sigma^\circ$, then Zucker has proven (\cite{Zucker}) that the sheaf cohomology group $H^k(\Sigma, j_*\VH)_{\tf}$ ($k=0,1,2$) carries a polarizable $\Z$-Hodge structure of weight $k+m$. 
%It is compatible with Tate twists, morphisms of VHS and is functorial with respect to pullbacks along morphisms between curves. 
%If $f:X\to \Sigma$ is a \emph{projective} morphism which is smooth over $\Sigma^\circ$, then the Leray spectral sequence is compatible with this Hodge structure. 
%Zucker further shows that the cohomology group $H^k_{(c)}(\Sigma^\circ, \VH)_{\tf}$ carries a functorial mixed Hodge structure. 
%They are compatible in the sense that the natural map $H^k_c(\cuo, \VH)_{\tf}\to H^k(\cuo, \VH)_{\tf}$ is a morphism of MHS. 
%In particular, the above Hodge structure on 
%\begin{equation}\label{LooijInduced}
%H^1(\cu, j_*\VH)=\mathrm{im}[H^1_c(\cuo, \VH)\to H^1(\cuo, \VH)]
%\end{equation}
%%(see \cite{Looij2}) 
%coincides with the induced one. 

The next lemma determines the Hodge structure if $\VH$ is of Tate type.

\begin{lem}\label{ApplicationZuckersVHS}
	Let $j:C^\circ \hookrightarrow C$ be as before and $\VH$ a polarized $\Z$-VHS of weight $m=2k$ and Tate type over $C^\circ$. 		
	Then there exists a commutative diagram 
	\begin{equation}\label{LemZuckersVHS}
	\begin{tikzcd}
	\hat{C}^\circ \arrow[r, hookrightarrow, "\hat{j}"]  \arrow[d, "f^\circ"'] & \hat{C} \arrow[d, "f"]  \\
 	C^\circ \arrow[r, hookrightarrow, "j"] & C
	\end{tikzcd}
	\end{equation}
	where $f$ is a branched Galois covering and $C^\circ$ the complement of its branch divisor. 
	Zucker's Hodge structure on $H^1(\Sigma, j_*\VH)_{\tf}$ is isogenous to $H^1(\hat{\Sigma}^\circ, \hat{j}_*\VH_0)^W=H^1(\hat{\Sigma},\VH_0)^W$ where $W$ is the covering group of $f^\circ$ and $\VH_0$ the typical stalk of $\VH_\Z$. 
	In particular, $H^1(\Sigma,j_*\VH)_{\tf}$ only has types $(k+1,k)$ and $(k,k+1)$. 
\end{lem}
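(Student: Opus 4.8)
The plan is to reduce the statement to the classical Hodge theory of a compact curve by passing to a finite covering that trivialises the monodromy of $\VH$. First I would note that, since $\VH$ is a \emph{polarized} $\Z$-VHS which is pure of weight $2k$ and of Tate type, every fibre is of pure Hodge type $(k,k)$, so the polarization $Q$ restricts to a positive definite symmetric form on $\VH_0\otimes\R$; hence $\Aut(\VH_0,Q)$ is finite and the monodromy representation $\rho\colon\pi_1(\Sigma^\circ)\to\Aut(\VH_0,Q)$ has finite image $W:=\mathrm{im}(\rho)$. Let $f^\circ\colon\hat\Sigma^\circ\to\Sigma^\circ$ be the connected Galois covering with deck group $W$ corresponding to $\ker\rho$, so that $f^{\circ*}\VH_\Z\cong\underline{\VH_0}$ is the constant sheaf (with its diagonal $W$-action) and $\VH_\Z=(f^\circ_*\underline{\VH_0})^W$. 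Completing $f^\circ$ to the normalization $f\colon\hat\Sigma\to\Sigma$ of $\Sigma$ in the function field $\C(\hat\Sigma^\circ)$ produces a branched covering of compact Riemann surfaces with $\hat S:=f^{-1}(S)$, an extended $W$-action satisfying $\hat\Sigma/W=\Sigma$, and the commutative diagram (\ref{LemZuckersVHS}).

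Next I would carry out the cohomological comparison, first rationally. Since $j\circ f^\circ=f\circ\hat j$ and $f$ is finite (hence $f_*$ is exact with $R^{>0}f_*=0$), one has $j_*f^\circ_*\underline{\VH_0}=f_*\hat j_*\underline{\VH_0}=f_*\underline{\VH_0}$, using that $\hat j_*$ of a constant sheaf across the finite set $\hat S$ is again constant; therefore $H^k(\Sigma,j_*f^\circ_*\underline{\VH_0})\cong H^k(\hat\Sigma,\underline{\VH_0})=H^k(\hat\Sigma,\Z)\otimes\VH_0$. Taking $W$-invariants is an exact, idempotent operation on $\Q$-sheaves, so it commutes with $j_*$ and with cohomology; together with $(f^\circ_*\underline{\VH_0})^W=\VH_\Z$ this gives, after $\otimes\Q$,
\begin{equation*}
H^1(\Sigma,j_*\VH)\otimes\Q\;\cong\;\bigl(H^1(\hat\Sigma,\Z)\otimes\VH_0\otimes\Q\bigr)^{W},
\end{equation*}
that is, $H^1(\Sigma,j_*\VH)_{\tf}$ is isogenous to $H^1(\hat\Sigma,\hat j_*\VH_0)^W=H^1(\hat\Sigma,\VH_0)^W$ (all groups in sight being torsion-free, and $\hat j_*\VH_0=\VH_0$ on $\hat\Sigma$).

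It remains to upgrade this isogeny to an isomorphism of $\Q$-Hodge structures, which is where the argument has its only real content; here I would invoke Theorem \ref{ZuckerThm} repeatedly. The $W$-equivariant splitting of the inclusion of VHS $\VH\hookrightarrow f^\circ_*\underline{\VH_0}$ on $\Sigma^\circ$, furnished by the averaging projector $\tfrac{1}{|W|}\sum_{w\in W}w$, consists of morphisms of $\Q$-VHS, so by functoriality it identifies $H^1(\Sigma,j_*\VH)_{\tf}$, as a Zucker Hodge structure, with the $W$-invariant part of $H^1(\Sigma,j_*f^\circ_*\underline{\VH_0})_{\tf}$. Compatibility of Zucker's construction with the Leray spectral sequence of the projective morphism $f$ (which degenerates, $f$ being finite) identifies the latter, as a Hodge structure, with $H^1(\hat\Sigma,\hat j_*\VH_0)_{\tf}$. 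Finally, for the \emph{constant} Tate-type VHS $\VH_0$ on $\hat\Sigma^\circ$ we have $\hat j_*\VH_0=\VH_0$, and by additivity together with compatibility with Tate twists, Zucker's Hodge structure on $H^1(\hat\Sigma,\VH_0)=H^1(\hat\Sigma,\Z)\otimes\VH_0$ is the tensor product of the classical weight-$1$ Hodge structure of the compact curve $\hat\Sigma$ with the pure $(k,k)$-structure $\VH_0$; this is pure of weight $2k+1$ with Hodge types $(k+1,k)$ and $(k,k+1)$, and these types descend to the $W$-invariant subspace, giving the claim. I expect the main obstacle to be precisely this last step: pinning down Zucker's Hodge structure in the constant-coefficient case and checking that the finite push-forward $f_*$ and the formation of $W$-invariants interact with Zucker's Hodge structures exactly as they do with the underlying $\Z$-lattices; by contrast the covering construction and the rational comparison are routine.
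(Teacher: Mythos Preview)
Your proposal is correct and follows essentially the same strategy as the paper: construct a finite Galois cover trivialising the monodromy (using that a polarized Tate-type VHS has finite monodromy), identify $\VH$ with $(f^\circ_*\underline{\VH_0})^W$, and then invoke Zucker's functoriality together with the Leray compatibility for the finite morphism to the base curve.

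The one noteworthy difference is in how you pass from the open to the compact curve. The paper works on $\Sigma^\circ$ and $\hat\Sigma^\circ$, builds a commutative square involving $H^1_c$ and $H^1$, and then uses the description $H^1(\Sigma,j_*\VH)=\mathrm{im}[H^1_c(\Sigma^\circ,\VH)\to H^1(\Sigma^\circ,\VH)]$ (equation~\eqref{LooijInduced}) to factor through the desired map. You instead stay on the compact curves throughout, using $j_* f^\circ_* = f_* \hat j_*$ together with the observation that $\hat j_*\underline{\VH_0}=\underline{\VH_0}$ (a constant sheaf extends trivially across a finite set), and then apply Zucker's Leray compatibility directly to the projective map $f:\hat\Sigma\to\Sigma$. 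Your route is a bit more streamlined and avoids the detour through compactly supported cohomology; the paper's route, on the other hand, makes the role of the image formula~\eqref{LooijInduced} explicit, which is convenient later when the same formula is used again. Both arguments rest on the same Hodge-theoretic inputs (Theorem~\ref{ZuckerThm}: functoriality, Tate twist compatibility, and Leray compatibility for projective morphisms), and your final identification of Zucker's Hodge structure in the constant-coefficient case with $H^1(\hat\Sigma,\Z)\otimes\VH_0$ is exactly right.
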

\begin{proof}
	Up to a Tate twist, the $\Z$-VHS $\VH$ consists of a local system $\VH_{\Z}$ of positive definite lattices so that we only write $\VH=\VH_{\Z}$. 
	This implies that its monodromy group $W$ has to be finite and we obtain an unbranched Galois covering $f^\circ: \hat{C}^\circ \to C^\circ$ with covering group $W$. 
	Since $f^\circ$ is locally given by $z\mapsto z^k$, it uniquely completes to a branched covering $f:\hat{C}\to C$ yielding (\ref{LemZuckersVHS}). \\
	By construction $(f^\circ)^*\VH\cong \VH_0$ so that $\VH\cong (f^\circ_*\VH_0)^W$. 
	The inclusion $i:(f^\circ_*\VH_0)^W\hookrightarrow f^\circ_*\VH_0$ is obviously a morphism of VHS.
	Moreover, the natural morphism 
	\begin{equation*}
	\phi:H^1(C^\circ, f^\circ_*\VH_0)_{\tf}\to H^1(\hat{C}^\circ, \VH_0)_{\tf},
	\end{equation*}
	 induced by the Leray spectral sequence, is a morphism of Hodge structures. 
	As $f^\circ$ is finite, $\phi$ is an isomorphism. 
	By the $W$-equivariance of $f^\circ$, these morphisms fit into the commutative diagram 
	\begin{equation}\label{ZuckerHodgeDiagram}
	\begin{tikzcd}
	H^1(C^\circ, f_*^\circ \VH_0) \ar[r, "\phi"] & H^1(\hat{C}^\circ, \VH_0) \\
	H^1(C^\circ, \VH) \ar[r, "\phi^W"] \ar[u, "i"] & H^1(\hat{C}^\circ, \VH_0)^W \ar[u, hookrightarrow] & & \VH=(f_*^\circ \VH_0)^W \\
	H^1_c(C^\circ, \VH) \ar[r, "\psi^W"] \ar[u] & H^1_c(\hat{C}^\circ, \VH_0)^W.\ar[u]
	\end{tikzcd}
	\end{equation}
	Here $\psi^W$ is the natural morphism $H^1_c(C^\circ, \VH)\to H^1_c(\hat{C}^\circ, \VH_0)^W$.
	Arguing as above, we see that it is compatible with mixed Hodge structures. 
	Thus (\ref{ZuckerHodgeDiagram}) is a commutative diagram of mixed Hodge structures. 
	Further $\psi^W$ and $\phi^W$ are isomorphisms over $\Q$ because we can then split off $(f^\circ_*\VH)^W$.
	But the lower square in (\ref{ZuckerHodgeDiagram}) factorizes over 
	\begin{equation*}
	H^1(C, j_*\VH)\to H^1(\hat{C}^\circ, j_*\VH_{0})^W=H^1(\hat{C}, \VH_{0})^W
	\end{equation*}
	which thus has to be an isogeny as well. 
\end{proof}
This lemma fits precisely in the previous situation: 
Let $j:\cuo\hookrightarrow \cu$ be the complement of the branch divisor of the cameral curve $p_b:\tcu_b\to \cu$ for $b\in \Bbo(\cu,G)$ and any simple complex Lie group $G$. 
Then $\VH=(p_b^\circ)^W_{*}\bLambda_G$ is a $\Z$-VHS of weight $0$ and Tate type. 
Moreover, the adjunction morphism 
\begin{equation*}
p_{b,*}^W \bLambda_G\to j_*j^*p_{b,*}^W\bLambda_G =j_*\VH
\end{equation*}
is an isomorphism. 
Hence $H^1(\cu,p_{b,*}^W\bLambda)_{\mathrm{tf}}$ is endowed with Zucker's Hodge structure of weight $1$. 
By Lemma \ref{ApplicationZuckersVHS} and Corollary \ref{cor:Hodge}, it coincides with the Hodge structure $H_G^\circ$ determined by $P_G^\circ(b)$. 
Replacing $\bLambda_G$ with $\bLambda_G^\vee$ and taking duals, we obtain the same statement for $\overline{H}_G$ and $\overline{P}_G(b)$.

Summarizing we obtain:
\begin{prop}\label{eq:relationToZucker}
	The polarizable $\Z$-Hodge structures $H^\circ_G$ and $\overline{H}_G$ of weight $1$ on $H^1(\cu,p_{b,*}^W\bLambda_G)_{\mathrm{tf}}$ and $H^1(\cu, p_{b,*}^W\bLambda_G^\vee)^\vee$ respectively coincide with Zucker's Hodge structures. 
	
	In particular, if $G$ is either a simple adjoint or simply-connected complex Lie group, then the Hodge structure $H_G$ with $J(H_G)\cong P_G$ is determined by Zucker's Hodge structure, cf. Remark \ref{rem:hodgep}. 
\end{prop}

\subsection{VHS and abstract Seiberg--Witten differential}
We next determine the holomorphic symplectic form $\omega_H$ on $\Hig^\circ(\cu, G)$ in terms of an abstract Seiberg--Witten differential. 

Let $\VH_G=\VH(\Hit^\circ_G)$ be the $\Z$-VHS determined by $\Hit^\circ:\Hig^\circ(\cu, G) \to \Bbo$. 
Lemma \ref{PrymsComplexStr} implies 
$
\VH_{G,\C}\cong (R^1\bm{p}_*\otimes \tfr)^W.
$
Hence the section
\begin{equation}\label{SWdifferential}
\bm{\lambda}:\Bbo\to F^1 \VH_{G,\Oo},\quad \bm{\lambda}(b)=\lambda_b \in H^0(\tcu_b,K_{\tcu}\otimes \tfr)^W
\end{equation}
is well-defined.

\begin{prop}\label{SWHitchin}
	The section $\bm{\lambda}\in H^0(\Bbo, \VH_G)$ is an abstract Seiberg--Witten differential. 
	Any Lagrangian section $s$ of $\Hig^\circ(\cu,G)$ induces an isomorphism 
	\begin{equation}\label{eq:isoabswhit}
	(\Hig^\circ(\cu,G),\omega_H)\cong (J(\VH_G),\omega_{\bm{\lambda}} )
	\end{equation} 
	as algebraic integrable systems over $\Bbo$ which sends $s$ to the zero section. 
\end{prop}
\begin{proof}
	It is proven in \cite[Proposition 8.2.]{HHP} that $\bm{\lambda}$ is an abstract Seiberg--Witten differential, i.e. 
	\begin{equation*}
	\phi_{\bm{\lambda}}:T\Bbo\to F^1\VH_G,\quad
	X\mapsto \nabla_X \bm{\lambda}, 
	\end{equation*}
	is an isomorphism. 
	Hence $J(\VH_G)\to \Bbo$ carries a Lagrangian structure $\omega_{\bm{\lambda}}$ by Proposition \ref{CorAbstractSWdifferential} where we use the natural polarization on $\VH_G$. 
	
	Since $\Hig^\circ(\cu,G)$ is a torsor for $J(\VH_G)$ over $\Bbo$ and Lagrangian sections of $\Hit$ exist, e.g. Hitchin sections, $\Hig^\circ(\cu,G)$ is isomorphic to $J(\VH_G)$ over $\Bbo$. 
	We fix such a section from now on. 
	It induces the symplectomorphism 
	\begin{equation*}
	(\Hig^\circ(\cu,G),\omega_H)\cong (T^*\Bbo/\Gamma, \hat{\eta}).
	\end{equation*}
	Here $\Gamma\subset T^*\Bbo$ is a fiberwise lattice determined by the Hamiltonian flows along the fibers (see \cite[Section 2]{Beck-thesis} for details) and $\hat{\eta}$ is the tautological symplectic structure on $T^*\Bbo$ descended to $T^*\Bbo/\Gamma$. 
	
	On the other hand, $\omega_{\bm{\lambda}}$ is determined by a symplectomorphism
	\begin{equation*}
	(T^*\Bbo/\Gamma, \hat{\eta})\cong (J(\VH_G), \omega_{\bm{\lambda}}). 
	\end{equation*}
	Combinining these two symplectomorphisms, we arrive at (\ref{eq:isoabswhit}).
\end{proof}

For the next corollary, define the isogenous $\Z$-VHS $\VH_G^\circ$ and $\overline{\VH}_G$ of weight $1$ with underlying $\Z$-local systems 
\begin{equation}\label{eq:VoG}
\VH_{G,\Z}^\circ=(R^1\bm{p}_{1,*}\bm{p}_{2,*}^W\bLambda_G)_{\tf} , \quad \overline{\VH}_{G,\Z}=(R^1\bm{p}_{1,*}\bm{p}_{2,*}^W\bLambda_G^\vee)^\vee. 
\end{equation}
By the previous proposition, $J(\VH^\circ_G)$ and $J(\overline{\VH}_G)$ are algebraic integrable systems over $\Bbo$.

\begin{cor}\label{cor:langlands}
	Let $G=G_{ad}$ or $G_{sc}$. 
	Then there is an isomorphism 
	\begin{equation*}
	\bm{\ell}:\Hig^\circ(\cu,G)^\vee\cong \Hig^\circ(\cu,^L G)
	\end{equation*}
	of algebraic integrable systems over $\Bbo$. 
\end{cor}
\begin{proof}
It suffices to consider the case $G=G_{ad}$. 
Then $\VH^\circ_G=\VH_G$ by Remark \ref{rem:hodgep}. 
Corollary \ref{cor:langlandsadsc} globalizes to give $\VH_G^\vee\cong \VH_{^L G}=\overline{\VH}_{^L G}$.
Now the statement follows from Proposition \ref{SWHitchin}.
\end{proof}
	This corollary is a special case of \cite[Theorem B]{DP}. 
	Our proof is Hodge-theoretic whereas the one in loc. cit. is moduli-theoretic. 
	With some extra work our method of proof generalizes to other $G$ but we only need $G=G_{ad}$ or $G=G_{sc}$ in the following. 

\section{Non-compact Calabi--Yau integrable systems}\label{SectNcCY3s}
In this section, we construct \emph{algebraic} integrable systems from certain families $\X\to \Bb$ of quasi-projective Gorenstein Calabi--Yau threefolds over the Hitchin base $\Bb=\Bb(\cu, G)$ of a simple complex Lie group $G$ of adjoint type $(G=G_{ad}$).
We call them \emph{non-compact Calabi--Yau integrable systems} which (implicitly) first appeared in \cite{DDP} (also see \cite[Section 7]{KS1}). 
%\\
%\todo{graph automorphisms}
%We confine ourselves to give the construction and state the main properties of the resulting quasi-projective Calabi--Yau threefolds. 
%For the proofs and detailed comparison between our construction and the one in \cite{DDP}, we refer to \cite{Beck2, Beck-thesis}.

\subsection{Slodowy slices}\label{SectSlodowySlices}
%Based on ideas of Brieskorn (\cite{Bri}) and Grothendieck, Slodowy gave a completely Lie-theoretic description of the deformation theory of $\Delta$-singularities and their simultaneous resolutions (over certain fields that are not necessarily of characteristic $0$). 
%We briefly recall some of his constructions (over $\C$ only) here to fix notation. 
Let $\Delta=\Delta_h^\Cc$ be an irreducible Dynkin diagram where $\Delta_h$ is the corresponding irreducible $\ADE$-Dynkin diagram with associated symmetry group $\Cc=\Cc(\Delta)\subset \Aut(\Delta_h)$, cf. Appendix \ref{SectFolding}. 
A $\Delta$-singularity $(Y,H)$, see \cite[Section 6.2]{Slo}, consists of a (germ of a) surface singularity $Y=(Y,0)$ of type $\Delta_h$, i.e. an $\ADE$-surface singularity, and a subgroup $H\subset \Aut(Y)$ with the following properties: 
\begin{enumerate}[label=\roman*)]
\item 
$H\cong \Cc$; 
\item
the action of $H$ on $Y-\{0\}$ is free; 
\item\label{eq:iii}
the induced action on the dual resolution graph of the minimal resolution $\hat{Y}\to Y$ coincides with the $\Cc$-action on $\Delta_h$. 
\end{enumerate}
If $\bLambda_{G_h}$ is the cocharacter lattice of the simple adjoint complex Lie group with Dynkin diagram $\Delta_h$, then the only non-trivial cohomology groups of $\hat{Y}$ satisfy
\begin{equation}\label{eq:cohofDeltah}
H^2(\hat{Y},\Z)\cong \bLambda_{G_h}, \quad H^2_c(\hat{Y},\Z)\cong \bLambda_{G_h}^\vee.
\end{equation}
These isomorphisms are compatible with the natural duality pairings. 
By \ref{eq:iii} and folding of Lie groups, see Appendix \ref{SectFolding}, it follows that 
\begin{equation}\label{eq:cohofDelta}
H^2(\hat{Y},\Z)^\Cc\cong \bLambda_{G^\vee} ,\quad H^2_c(\hat{Y},\Z)_{\Cc}\cong \bLambda_{G^\vee}^\vee. 
\end{equation}
Here $G^\vee=G_{ad}(\Delta^\vee)$ is the simple adjoint complex Lie group \emph{with Dynkin diagram $\Delta^\vee$}. 
\begin{rem}\label{rem:folding}
We emphasize that to any $\Delta$-singularity, we associate the simple adjoint complex Lie group $G$ with Dynkin diagram $\Delta^\vee$ via (\ref{eq:cohofDelta}). 
The apparent difference comes from the following: 
In his definition of $\Delta$-singularities, Slodowy uses the folding convention for the cocharacter lattice of $G_h$ which is given by taking \emph{invariants}, see (\ref{eq:cohofDelta}). 
This results in the Dynkin diagram $\Delta=\Delta_h^\Cc$. 
Moreover, we will see below that $\Delta$-singularities are realized inside the simple complex Lie algebra $\gfr(\Delta)$.

On the other hand, the folding procedure of simple complex Lie groups takes \emph{coinvariants} in the character lattices $\Delta_{G_h}^\vee$. 
Since the Dynkin diagram of the folded group $G^\vee$ is by definition the one of its character lattice $\bLambda_G^\vee=\bLambda_{G_h,\Cc}^\vee$, we arrive at the dual Dynkin diagram $\Delta^\vee$. 
\end{rem}

Every $\Delta$-singularity $(Y,H)$ is quasi-homogeneous, in particular, $Y$ carries a $\C^*$-action that commutes with the $H$-action. 
A $\C^*$-deformation of $(Y,H)$ is therefore a $\C^*\times H$-deformation $\mathcal{Y}\to B$ such that $H$ acts trivially on the base. 
As shown in \cite[Section 2]{Slo}, each $(Y,H)$ has a semi-universal $\C^*$-deformation. 
It is realized in the simple complex Lie algebra $\gfr=\gfr(\Delta)$ of type $\Delta$ as follows: 
Let $x\in \gfr$ be a subregular nilpotent element and $(x,y,h)$ an $\mathfrak{sl}_2$-triple in $\gfr$ with semisimple $h$. 
Then the Slodowy slice associated to the triple $(x,y,h)$ is defined by 
\begin{equation*}
S:=x+\ker ad(y)\subset \gfr.
\end{equation*}
It carries a non-trivial action by the group $\C^*\times \Cc$ (cf. \cite[Section 6]{Slo}). 
Here the $\Cc$-action is defined by the action (via adjunction) of the group
\begin{equation*}
C(x,h)/C(x,h)^\circ \cong \Cc
\end{equation*}
which is the group of connected components of $C(x,h)=\{g\in G_{ad}~|~g\cdot x=x, g\cdot h= h\}$. 
For a fixed Cartan subalgebra $\tfr\subset \gfr$, denote by $\chi:\gfr\to \tfr/W$ the adjoint quotient and 
\begin{equation*}
\sigma:=\chi_{|S}: S\to \tfr/W
\end{equation*}
its restriction to the Slodowy slice $S$. 
If we let $\C^*$ act on $\tfr/W$ with \emph{twice} the usual weight and $\Cc$ on $\tfr/W$ trivially, then $\sigma$ is $\C^*\times \Cc$-equivariant. 
Slodowy has shown that the fiber $(\sigma^{-1}(0),\Cc)$ is a $\Delta$-singularity and $\sigma: S\to \tfr/W$ with the $\C^*\times \Cc$-action is a $\C^*$-semi-universal deformation of $(\sigma^{-1}(0),\Cc)$ (\cite{Slo}, Section 8.7).

%\begin{rem}\label{remOnAS2}
%Folding is usually a process to go from $\ADE$-Dynkin diagrams to $\BCFG$-Dynkin diagrams. 
%Therefore it might seem counter-intuitive that we set $AS(\Delta)=1$ if $\Delta$ is an $\ADE$-Dynkin diagram but non-trivial if $\Delta$ is a $\BCFG$-Dynkin diagram. 
%This corresponds to the \emph{intrinsic approach} to $\BCFG$-singularities by Slodowy, i.e. working with a Slodowy slice $S\subset \gfr(\Delta)$. 
%Slodowy also gave an \emph{extrinsic approach} where one works with a Slodowy slice $S_h\subset \gfr(\Delta_h)$ for $\Delta_h$ as in table \ref{FoldingDynkin} (\cite{Slo}).
%All of our constructions below can be performed via the extrinsic approach as well and yield the same results, see \cite{Beck-thesis} for more details. 
%\end{rem}

The stratification of $\tfr^1/W=\tfr^\circ\cup D^s\cup D^l$ introduced in Section \ref{SectStratifications} interacts with the restriction
\begin{equation}\label{SlodowyS1}
S^1:=\sigma^{-1}(\tfr^1/W) \to \tfr^1/W
\end{equation}
of $\sigma:S\to \tfr/W$. 
It coincides with the singularity stratification of $\tfr^1/W$ induced by $\sigma$ meaning that $\sigma$ is smooth over $\tfr^\circ/W$ and 
\begin{equation}\label{ExSingConfg}
\sigma^{-1}(\bar{t})\mbox{ has }
\begin{cases}
\mbox{an }\mathrm{A}_1\mbox{-singularity if }\bar{t}\in D^s,\\
\mbox{an }\mathrm{A}_1\times\mathrm{A}_1\mbox{-singularity if }\bar{t}\in D^l\mbox{ and }\Delta\neq \mathrm{G}_2,\\
\mbox{an }\mathrm{A}_1\times\mathrm{A}_1\times \mathrm{A}_1\mbox{-singularity if }\bar{t}\in D^l\mbox{ and }\Delta=\mathrm{G}_2.
\end{cases}
\end{equation}
%To give the simultaneous resolution of $\sigma:S\to \tfr/W$, recall Grothendieck's simultaneous resolution\footnote{Technically, this is a simultaneous alteration (in the sense of de Jong) because we have to pass to a (branched) covering. However, we keep the common term simultaneous resolution in the following.} 
%\begin{equation*}
%\begin{tikzcd}
%\tilde{\gfr} \ar[r, "\psi"] \ar[d, "\tilde{\chi}"] & \gfr \ar[d, "\chi"] \\
%\tfr \ar[r, "q"] & \tfr/W
%\end{tikzcd}
%\end{equation*}
%of the adjoint quotient $\chi:\gfr\to \tfr/W$. 
Grothendieck's simultaneous resolution\footnote{Technically, this is a simultaneous alteration (in the sense of de Jong) because we have to pass to a (branched) covering. However, we keep the common term simultaneous resolution in the following.} 
restricts to give a simultaneous resolution
\begin{equation*}
\begin{tikzcd}
\tilde{S} \ar[r, "\psi"] \ar[d, "\tilde{\sigma}"] & S \ar[d, "\sigma"] \\
\tfr \ar[r, "q"] & \tfr/W
\end{tikzcd}
\end{equation*}
of $\sigma: S\to \tfr/W$. 
There is a natural $\Cc$-action on $\tilde{S}$ such that all morphisms in this diagram become $\Cc$-equivariant. 
To get $\C^*$-equivariance one has to choose $\tfr$ appropriately (\cite[Remark 1.53]{Beck-thesis}). 

In \cite[Section 2.2]{Beck2}, it is shown that $\psi:\tilde{S}\to S$ is even a simultaneous \emph{symplectic} resolution. 
More precisely, \cite[Proposition 1]{Beck2} shows that the Kostant--Kirillov form on $\gfr$ induces a nowhere-vanishing and $\Cc$-invariant section 
\begin{equation}\label{eq:nu}
\hat{\nu}\in H^0(S,K_\sigma)^\Cc
\end{equation}
for the relative canonical class $K_\sigma$ of $\sigma: S\to \tfr/W$.
In particular, $\psi^*\hat{\nu}_{t}$, $t\in \tfr$, is a holomorphic symplectic form on $\tilde{S}_{t}$ and $\psi_t:\tilde{S}_{t}\to S_{\bar{t}}$ is a symplectic resolution.

\subsection{Construction of threefolds}\label{ss:threefolds}
Fix an irreducible Dynkin diagram $\Delta=\Delta_h^\Cc$ together with a Slodowy slice $S\subset \gfr(\Delta)$ with its $\C^*\times \Cc$-action. 
Then we construct families of surfaces over $\Ub=K_\cu\times_{\C^*} \tfr/W$ as follows: 
Let $L\in \mathrm{Pic}(\cu)$ be a spin bundle, i.e. $L^2=K_\cu$. 
By the $\C^*$-equivariance\footnote{We emphasize again that here $\C^*$ acts with \emph{twice} the standard weights on $\tfr/W$. With these weights on $\tfr/W$, we have $\Ub\cong L\times_{\C^*} \tfr/W$.} of $\sigma:S\to \tfr/W$, we obtain a family 
\begin{equation*}
\bsigma_L: \Sc_L:=L\times_{\C^*} S \to \Ub
\end{equation*}
of surfaces which are $\Cc$-deformations of the $\Delta_h$-singularity. 
Further let $\Bb=\Bb(\cu, G)$ be the Hitchin base for the simple adjoint complex Lie group $G=G_{ad}(\Delta)$. 

Similarly to \cite{Sz1}, \cite{DDP}, we construct a family $\pib_L:\X_L\to \Bb$ of threefolds via the diagram
\begin{equation}\label{familyCY3s}
\begin{tikzcd}
\X_L\arrow[r] \arrow[d, "\pib_{1}"] \arrow[dd, bend right=60, "\pib_L"'] & \Sc_{L} \arrow[d, "\bsigma_{L}"] \\
\Sigma\times \Bb \arrow[d, "\pib_{2}=pr"] \arrow[r, "ev"] & \Ub \\
\Bb.
\end{tikzcd}
\end{equation}
Here the square is cartesian. 
Letting $\Cc$ act trivially on $\Ub$ and $\cu\times \Bb$, all morphisms in the cartesian square of (\ref{familyCY3s}) are $\Cc$-equivariant. 
We denote by $\pi_b:X_b\to \cu$ the restriction of $\pib_1$ to $\cu\times \{b\}$.
\\
Analogously, the simultaneous resolution $\tilde{S}\to \tfr$ can be used to construct a \emph{smooth} family $\tilde{\pib}_L:\Xt_L\to\Bb$ of threefolds that depends on the spin bundle $L$ as well. 
It factorizes over the universal cameral curve by construction, 
\begin{equation*}
\begin{tikzcd}
\Xt_L \ar[rr, bend left,  "\tilde{\pib}_L"] \ar[r, "\tilde{\pib}_1"] & \btSigma \ar[r, "\bm{p}"] & \Bb,
\end{tikzcd}
\end{equation*} 
and each restriction $\tilde{\pi}_b:\tilde{X}_b\to \tSigma_b$ is a simultaneous resolution of $\pi_b:X_b\to \cu$ for $b\in \Bbo$.

\begin{thm}\cite[Theorem 2]{Beck2}\label{ThmCY3s}
Let $\Delta$ be an irreducible Dynkin diagram, $S\subset \gfr(\Delta)$ a Slodowy slice with simultaneous resolution $\tilde{S}$ and $L$ a spin bundle of $\cu$. 
Then $\pib_L:\X_L\to \Bb$ as well as $\tilde{\pib}_L:\Xt_L\to \Bb$ are algebraic families of quasi-projective Gorenstein threefolds with $\Cc$-trivial canonical class. 
The former is smooth over $\Bbo\subset \Bb$ whereas the latter is smooth over all of $\Bb$. 
\end{thm}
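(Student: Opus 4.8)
The plan is to verify in turn that $\pib_L$ and $\tilde\pib_L$ are finite-type morphisms with quasi-projective total space, that their fibres are Gorenstein threefolds, that the relative dualising sheaf is $\Cc$-equivariantly trivial, and finally to establish the two smoothness assertions. For algebraicity and quasi-projectivity: the Slodowy slice $S=x+\ker\ad(y)$ is an affine-linear subspace of $\gfr$, hence smooth and affine, and Grothendieck's simultaneous resolution realises $\tilde{S}$ as the base change $\tilde{\gfr}\times_{\gfr}S$ with $\tilde{\gfr}$ quasi-projective (it is the total space of a vector bundle over $G/B$), so $\tilde{S}$ is quasi-projective. Forming the associated bundles $\Sc_L=L\times_{\C^*}S$ and $\tilde{\Sc}_L=L\times_{\C^*}\tilde{S}$ over the projective curve $\cu$ preserves quasi-projectivity, and since $\Bb=H^0(\cu,\Ub)$ is a finite-dimensional affine space the fibre products defining $\X_L$ and $\Xt_L$ in (\ref{familyCY3s}) are again quasi-projective; thus $\pib_L,\tilde\pib_L$ are morphisms of finite type over $\C$. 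The section $\{b\}\times\cu\hookrightarrow\Ub$ is a regular embedding of codimension $r=\dim\tfr/W$, so each fibre $X_b$ (resp.\ $\tilde X_b$) is cut out in the smooth space $\Sc_L$ (resp.\ $\tilde{\Sc}_L$) by a regular sequence of length $r$; hence it is a local complete intersection, in particular Gorenstein, of dimension $\dim\cu+\dim(\text{fibre of }\sigma)=1+2=3$.

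For the canonical class I would proceed as follows. The morphism $\sigma\colon S\to\tfr/W$ is flat by miracle flatness ($S$ is Cohen--Macaulay, $\tfr/W$ is regular, $\sigma$ is equidimensional) with Gorenstein fibres, so it carries an invertible relative dualising sheaf $\omega_\sigma=\omega_S\otimes\sigma^*\omega_{\tfr/W}^{\vee}$, which is trivial because $S\cong\C^{r+2}$ and $\tfr/W\cong\C^r$ have trivial canonical bundles. The point is to exhibit a nowhere-vanishing section of $\omega_\sigma$ that is simultaneously $\Cc$-invariant and homogeneous of the correct $\C^*$-weight: one builds the relative volume form from the $\Ad$-invariant volume form on $\gfr$ by a residue/contraction along the adjoint quotient $\chi$, its $\Cc$-invariance coming from $\Cc\subset C(x,h)\subset G_{ad}$ acting by $\Ad$, and a weight count shows that in the normalisation making $\Ub\cong L\times_{\C^*}\tfr/W$ this form has $\C^*$-weight $-2$. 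Consequently $\omega_{\bsigma_L}\cong\mathrm{pr}_\cu^{*}L^{-2}=\mathrm{pr}_\cu^{*}K_\cu^{-1}$, and since base change of the flat morphism $\bsigma_L$ along $ev$ pulls its relative dualising sheaf back to that of $\pib_{1,L}$, one obtains $\omega_{\pib_L}\cong\pib_{1,L}^{*}\,\mathrm{pr}_\cu^{*}(L^{-2}\otimes K_\cu)\cong\Oo_{\X_L}$, $\Cc$-equivariantly by construction (using $\omega_\Bb\cong\Oo_\Bb$). For $\tilde\pib_L$ the same computation applies with $\tilde{\sigma}\colon\tilde{S}\to\tfr$: this is a smooth morphism with $\omega_{\tilde\sigma}$ trivial, the simultaneous resolution $\tilde{S}\to S$ being crepant so that the relative volume form and its $\Cc$-invariance pull back. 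This step --- the construction of the $\Cc$-invariant relative volume form of the correct $\C^*$-weight --- is the main obstacle, since neither the $\Cc$-equivariance of Slodowy's construction nor the precise weight bookkeeping (which is exactly what forces the role of the spin structure $L$) is automatic.

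Finally, for smoothness: by Slodowy's theorem $\tilde{\sigma}\colon\tilde{S}\to\tfr$ is a smooth morphism, and since $ev\colon\cu\times\Bb\to\Ub$ is a submersion ($\Ub$ being a sum of sufficiently positive line bundles on $\cu$ for $g(\cu)\ge2$) the construction produces a Cohen--Macaulay total space $\Xt_L$; checking fibrewise with the local normal forms of the simultaneous resolution over the strata $\tfr^\circ/W$, $D^s$, $D^l$ of (\ref{ExSingConfg}) shows every fibre $\tilde X_b$ is smooth, so $\tilde\pib_L$ is flat with smooth fibres, i.e.\ smooth over all of $\Bb$. For $\pib_L$ over $\Bbo$ one uses that $\sigma\colon S\to\tfr/W$ is a semi-universal $\C^*$-deformation, hence over the smooth discriminant strata it is, locally, the miniversal deformation of an $A_1$- (resp.\ $A_1{\times}A_1$-, $A_1^{3}$-) singularity; since a section $b\in\Bbo$ meets $\discr(\bm{q})^{sm}=\Db$ transversally (cf.\ (\ref{DefBbo}) and Lemma \ref{LemSmoothDivisor}), its pullback is, in suitable local coordinates, $\{xy=z^2+t\}$ (or a product of such) with $t$ a local coordinate on $\cu$, whose total space is smooth; hence $X_b$ is smooth for $b\in\Bbo$.
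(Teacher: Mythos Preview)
Your proposal follows essentially the same route as the paper's own sketch (which defers the full argument to \cite{Beck2}): the heart of both is the construction of a nowhere-vanishing $\Cc$-invariant section of $K_{\pib_1}\otimes(\mathrm{pr}_1\circ\pib_1)^*K_\cu$, obtained from a $\Cc$-invariant relative volume form on $\sigma\colon S\to\tfr/W$ of the correct $\C^*$-weight, followed by base change and adjunction to trivialise $K_{X_b}$. You correctly flag this construction as the main obstacle and give the right mechanism (the $\Ad$-invariant volume form on $\gfr$, contracted against $\chi$, with the spin bundle $L$ absorbing the weight). Your treatment of quasi-projectivity and the Gorenstein property via local complete intersections is more explicit than the paper's sketch and is fine.

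One point to tighten: your argument that $\tilde\pib_L$ is smooth over \emph{all} of $\Bb$ appeals to ``checking fibrewise with the local normal forms \ldots\ over the strata $\tfr^\circ/W$, $D^s$, $D^l$''. But those strata only exhaust $\tfr^1/W$, so this check only covers $b\in\Bbo$. For general $b$ the section may hit deeper strata of the discriminant, where $\tSigma_b$ is singular, and the factorisation $\tilde X_b\to\tSigma_b$ (smooth) alone does not give $\tilde X_b$ smooth. The cleaner route is to argue with total spaces: since $ev$ is a submersion, $\btSigma$ is smooth and $\btSigma\to\Ubt$ is smooth; combined with smoothness of $\tilde\sigma\colon\tilde S\to\tfr$ (hence of $\tilde{\Sc}_L\to\Ubt$), the fibre product $\Xt_L$ is smooth, and one then checks directly that the differential of $\Xt_L\to\Bb$ is surjective. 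In any case this step lies outside the paper's sketch and is handled in \cite{Beck2}.
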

\begin{proof}[Sketch of proof]
	For later purposes we briefly indicate the construction of nowhere-vanishing and $\Cc$-invariant sections $s_b\in H^0(X_b,K_{X_b})^\Cc$ (equivalently a $\Cc$-trivialization of $K_{X_b}$) and $\tilde{s}_b\in H^0(\tilde{X}_b,K_{\tilde{X}_b})^\Cc$ for $b\in \Bb$. 
	In \cite[Section 3.3.]{Beck2} we construct nowhere-vanishing and $\Cc$-invariant sections
	\begin{gather}
	\bm{s}\in H^0(\X, K_{\pib_1} \otimes (pr_1\circ \pib_1)^*K_\Sigma), \label{SectionS} \\
	\tilde{\bm{s}}\in H^0(\tilde{\X}, K_{\tilde{\pib}_1}\otimes (pr_1\circ \bm{p}_1)^*K_\cu ), \label{SectionTildeS}
	\end{gather}
	where $pr_1:\cu\times \Bb$ is the projection to the first factor. 
	These are glued from the section $\hat{\nu}\in H^0(S,K_\sigma)^\Cc$, see (\ref{eq:nu}), and its pullback to $\tilde{S}$ respectively.
	Base change and the adjunction formula imply 
	\begin{equation*}
		(K_{\pib_1}\otimes (pr_1\circ \pib_1)^*K_\cu)_{|X_b}\cong K_{\pi_b}\otimes \pi_b^*K_\cu \cong K_{X_b}
	\end{equation*}
	and analogously for $\pib_1$ replaced by $\tilde{\pib}_1$. 
	Therefore the restrictions 
	\begin{equation*}
		s_b:=\bm{s}_{|X_b}\in H^0(X_b, K_{X_b}), \quad \tilde{s}_b:=\tilde{\bm{s}}_{|X_b}\in H^0(X_b,K_{\tilde{X}_b})
	\end{equation*}
	$\Cc$-equivariantly trivialize $K_{X_b}$ and $K_{\tilde{X}_b}$ respectively. 
\end{proof}
From now on we fix a spin bundle $L\in \mathrm{Pic}(\cu)$ and drop it from the notation if unnecessary.
Note that the restriction $\pib^\circ:\X^\circ\to \Bbo$, which is smooth, factors as 
\begin{equation}\label{factorizationpib}
\begin{tikzcd}
\X^\circ\arrow[r] \arrow[d, "\pib_{1}^1"] \arrow[dd, bend right=60, "\pib^\circ"'] & \Sc^1 \arrow[d, "\bsigma^1"] \\
\Sigma\times \Bbo \arrow[d, "\pib_{2}^\circ"] \arrow[r, "ev"] & \Ub^1 \\
\Bbo. 
\end{tikzcd}
\end{equation}
Here $\Sc^1=K_\cu\times_{\C^*} S^1$ for $S^1=\sigma^{-1}(\tfr^1/W)$ as in (\ref{SlodowyS1}). 

\subsection{Non-compact Calabi--Yau integrable systems}\label{ncCYintsys}
%In Example \ref{ExCCY} we briefly recalled \emph{compact} Calabi--Yau integrable systems due to Donagi-Markman (\cite{DM1}). 
%These require a base change for their construction and are only in trivial cases algebraic integrable systems. 
%In this section we show that each family $\pib:\X\to \Bb$ of quasi-projective Calabi--Yau threefolds constructed in \ref{ThmCY3s} gives rise to an integrable sytem via their intermediate Jacobians (over the Zariski-open and dense $\Bbo\subset \Bb$). 
%To do so, we show that the period map constructed from the section $\bm{s}\in H^0(\X, K_{\pib_1} \otimes (\mathrm{pr}_1\circ \pib_1)^*K_\Sigma)$ of (\ref{SectionS}) is an abstract Seiberg--Witten differential and apply Proposition \ref{CorAbstractSWdifferential}. 
%But first we need information about $R^3\pib^\circ_*\Z$. 
% NEED THESE COMMENTS?
%Some arguments of this section reoccur in Section \ref{SectProof} in the context of mixed Hodge modules. 
%However, it is desirable to have more elementary arguments for constructing these non-compact Calabi--Yau integrable systems. 
%We begin by studying the variation of mixed Hodge structures (VMHS) defined by the middle cohomology groups of $\Xo\to \Bbo$ and then see that the associated intermediate Jacobian fibration over $\Bbo$ is an integrable system via the methods from Section \ref{SectIntSys}.

%\subsection{VMHS of the family of non-compact CY3s}\label{ncCYIS}
Let us fix a Slodowy slice $S=x+\ker \ad(y)\subset \gfr$, a spin bundle $L$ over $\cu$ and a family $\X=\X_{L}\to \Bb$ as in Theorem \ref{ThmCY3s}.
Then the cohomology sheaf 
\begin{equation*}
\VH_\Z^{CY}:=R^3\pib^\circ_*\Z
\end{equation*}
underlies a graded-polarizable $\Z$-VMHS 
\begin{equation*}
\VH^{CY}:=(\VH_\Z^{CY},\mathbb{W}_\bullet^{CY},\F^\bullet_{CY})
\end{equation*}
by \cite[Corollary 1.18]{BrosnanElZein} since $\pib$ is quasi-projective and $R^3\pib^\circ_*\Z$ is locally constant.

\begin{lem}\label{LemMHSofCY3s}
Let $b\in \Bbo$. 
The graded-polarizable $\Z$-mixed Hodge structure on $H^3(X_b,\Z)$ is up to a Tate twist pure of weight $1$ with only possibly non-zero $H^{12}$ and $H^{21}$. 
\end{lem}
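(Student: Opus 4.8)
The plan is to compute $H^3(X_b,\Z)$ explicitly in terms of the fibration $\pi_b\colon X_b\to\cu$ coming from the cartesian square (\ref{factorizationpib}), using the Leray spectral sequence, and then to identify the resulting Hodge structure with the one attached to $P_b$ via Zucker's theorem (Theorem \ref{ZuckerThm}). First I would recall that $X_b$ is the total space of the family of surfaces $\Sc^1_b=\pi_b^{-1}(\cdot)$ over $\cu$, each of which is a $\Delta$-deformation of a surface singularity: over $\cu-Br_b$ the fibers are smooth affine $\ADE$-resolutions-deformations (diffeomorphic to a bouquet of $2$-spheres indexed by the roots, i.e.\ with cohomology concentrated in degrees $0$ and $2$), and over $Br_b$ the fibers acquire mild $A_1$-type degenerations as listed in (\ref{ExSingConfg}). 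The key local input is that each fiber $(\Sc^1_b)_x$ has $H^0=\Z$, $H^1=0$ and $H^2$ a lattice of rank $=\#R$ (the Milnor lattice of the corresponding $\ADE$-singularity), with no higher cohomology since the fibers are homotopy equivalent to a real surface / wedge of $2$-spheres.

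\textbf{Key steps.} (1) Write down the Leray spectral sequence $E_2^{p,q}=H^p(\cu,R^q\pi_{b,*}\Z)\Rightarrow H^{p+q}(X_b,\Z)$. By the local analysis $R^q\pi_{b,*}\Z=0$ for $q\neq 0,2$, $R^0\pi_{b,*}\Z=\Z_\cu$, and $R^2\pi_{b,*}\Z$ is a constructible sheaf which on $\cu-Br_b$ is the local system $\Lambda_h\otimes\Z$ (the $\ADE$ root lattice of $\Delta_h$ with its monodromy), extended across $Br_b$. Since $\cu$ is a curve, only the columns $p=0,1,2$ are nonzero, and the spectral sequence degenerates at $E_2$ for dimension reasons on the relevant pieces. (2) The contribution to $H^3$ then comes only from $E_2^{1,2}=H^1(\cu,R^2\pi_{b,*}\Z)$, and I would argue — exactly as in the computation underlying Corollary \ref{PrymsComplexStr} and Proposition \ref{AlternativeViaZucker} — that $R^2\pi_{b,*}\Z$ is (up to a finite-group correction that vanishes after passing to the relevant summand, and that was already handled in Lemma \ref{LemHitchinPrymsAVs}) the pushforward $j_*\VH$ of the weight-$0$ polarized $\Z$-VHS $\VH$ on $\cu-Br_b$ built from the cameral cover $p_b\colon\tcu_b\to\cu$. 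In fact after the $\Cc$-invariant / $W$-equivariant reduction the relevant piece is identified with $p^W_{b,*}\bLambda$, which is exactly the sheaf appearing in Proposition \ref{AlternativeViaZucker}. (3) Apply Theorem \ref{ZuckerThm} with $m=0$: $H^1(\cu,j_*\VH)_{\tf}$ carries a polarized $\Z$-Hodge structure of weight $1$. By Lemma \ref{ApplicationZuckersVHS} this Hodge structure has only Hodge types $(1,0)$ and $(0,1)$, hence is pure of weight $1$ and effective. Finally Zucker's compatibility with the Leray spectral sequence for the projective morphism (a projective compactification of) $\pi_b$ guarantees that this is the Hodge structure induced on the corresponding graded piece of $H^3(X_b,\Z)$, and the degeneration argument from (1)–(2) shows that $H^3(X_b,\Z)$ \emph{equals} that graded piece, i.e.\ the weight filtration is trivial in the relevant spot. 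One also needs to check the other potential contributions to $W_\bullet H^3$ vanish: $E_2^{3,0}=H^3(\cu,\Z)=0$ and $E_2^{2,1}=H^2(\cu,R^1\pi_{b,*}\Z)=0$ since $R^1\pi_{b,*}\Z=0$, so there is nothing else.

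\textbf{Main obstacle.} The delicate point is the behavior of $R^2\pi_{b,*}\Z$ across the branch locus $Br_b$ and the verification that the weight filtration on $H^3(X_b,\Z)$ is concentrated in weight $1$ rather than picking up, say, a weight-$2$ piece from vanishing cycles at the singular fibers. This is exactly where one must use that the fibers of $\pi_b$ over $Br_b$ have only $A_1$ (resp.\ $A_1\times A_1$, $A_1\times A_1\times A_1$) singularities and that the total space $X_b$ is smooth — Lemma \ref{LemSmoothDivisor} ensures the branch divisor is smooth so the degenerations do not collide — so that the relevant local invariant-cycle / weight-monodromy contributions are the mildest possible and the canonical extension $j_*\VH$ is the correct sheaf. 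Equivalently, one sees that $X_b$ retracts onto a space fibered in real surfaces so that $H^3$ is purely of "$H^1$ of a curve with coefficients" type; concretely this is the content of identifying $H^3(X_b,\Z)$ with $H^1(\tcu_b,\bLambda)^W$ up to isogeny via Proposition \ref{AlternativeViaZucker}. Once that identification is in place, effectivity and purity of weight $1$ are immediate from the last sentence of Lemma \ref{ApplicationZuckersVHS}.
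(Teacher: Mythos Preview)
Your approach is essentially the same as the paper's: Leray spectral sequence for $\pi_b\colon X_b\to\cu$, vanishing of $R^q\pi_{b,*}\Z$ for $q\neq 0,2$ from the local fiber analysis, identification $H^3(X_b,\Z)\cong H^1(\cu,R^2\pi_{b,*}\Z)$, recognition that $R^2\pi_{b,*}\Z\cong j_*(R^2\pi^\circ_{b,*}\Z)$ with $R^2\pi^\circ_{b,*}\Z$ a polarizable $\Z$-VHS of Tate type, and then Zucker's theorem plus Lemma~\ref{ApplicationZuckersVHS} for purity and the restriction to two Hodge types. One small point worth tightening: the paper treats $R^2\pi^\circ_{b,*}\Z$ as weight $2$ (it is $H^2$ of surfaces), so Zucker produces a pure Hodge structure of weight $3$ with only types $(2,1)$ and $(1,2)$, which becomes effective of weight $1$ after a Tate twist; your ``$m=0$'' formulation differs from this only by that twist. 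The one place where you should be more careful is the sentence ``Zucker's compatibility with the Leray spectral sequence for the projective morphism (a projective compactification of) $\pi_b$'': since $\pi_b$ is not proper, the paper explicitly does \emph{not} invoke Zucker for this step but instead cites \cite{Arapura} and \cite{PS-MHS} (and later Saito's mixed Hodge modules) for the compatibility of the Leray spectral sequence with mixed Hodge structures; your parenthetical compactification gesture is not quite a proof of this.
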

Also compare with \cite{DDP} for the case $\Cc\neq 1$. 
\begin{proof}
Consider the Leray spectral sequence for $\pi=\pi_b:X=X_b\to \cu$, 
\begin{equation}\label{LerayH3H1}
H^p(\cu,R^q\pi_*\Z)\Rightarrow H^{p+q}(X,\Z). 
\end{equation}
Let $\cu^\circ\subsetneq \cu$ be the locus of smooth fibers of $\pi$. 
Then $\pi^\circ:X^\circ\to \cu^\circ$ is topologically locally trivial with fiber (diffeomorphic to) the minimal resolution $\tilde{Y}$ of the $\Delta$-singularity $Y$. 
But the latter is homotopic to a bouquet of spheres (\cite[Section 4.3]{Slo2}) so that $(R^q\pi_*\Z)_t=0$ for $q\neq 0,2$.

If $t\in  F:=\cu-\cu^\circ$, choose a small disc $D$ around $t$ such that $D\cap F=\{t\}$.
If $D$ is small enough, we contract $\pi^{-1}(D)$ to the central fiber $Q_t:=\pi^{-1}(t)$ so that $(R^q\pi^\circ_{*}\Z)_t=H^q(Q_t,\Z)$\footnote{Note that $\pi:X\to \cu$ is not proper so that we cannot directly apply base change to conclude the same statement.}.
But $Q_t$ is homeomorphic to $\hat{Y}$ with up to three exceptional curves contracted, see(\ref{ExSingConfg}).
Hence we again conclude $(R^q\pi_{*}\Z)_t=0$ if $q\neq 0,2$. 

Since $\cu$ has cohomological dimension two, it follows that the Leray spectral sequence yields an isomorphism
\begin{equation}\label{eq:leray}
H^3(X,\Z)\cong H^1(\cu, R^2\pi_*\Z).
\end{equation} 
The local system $R^2\pi^\circ_*\Z$ carries a polarizable $\Z$-VHS of weight $2$ and Tate type by \cite[Lemma 1.81]{Beck-thesis}.
If $j:\cu^\circ\to \cu$ is the open inclusion, then $R^2\pi_*\Z\cong j_*R^2\pi^\circ_*\Z$.
Therefore $H^1(\cu,R^2\pi_*\Z)$ is endowed with Zucker's $\Z$-Hodge structure of weight $1+2=3$, see Section \ref{ss:relzucker}.
The Leray spectral sequence for $\pi$ is compatible with the mixed Hodge structures on both sides of \ref{eq:leray} (\cite[Corollary 14.14]{PS-MHS})\footnote{This result uses the theory of mixed Hodge modules. Note that we cannot apply \cite{Zucker} directly because it treats the case of projective morphisms but $\pi$ is quasi-projective.}.
Hence the mixed Hodge structure on $H^3(X,\Z)$ is pure of weight $3$. 
It can be seen as in the proof of Lemma \ref{ApplicationZuckersVHS} that it is effective, i.e. its only (possibly) non-zero  are $H^{12}$ and $H^{21}$. 
\end{proof}
\begin{cor}\label{CorVMHS}
The graded-polarizable $\Z$-VMHS $\VH^{CY}$ is pure of weight $3$, i.e. $\mathbb{W}_{\bullet}^{CY}=0$, and has a second-step Hodge filtration. 
In particular, it is an \emph{admissible} VMHS (\cite{SteenbrinkZucker}). 
\end{cor}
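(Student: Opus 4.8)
The plan is to deduce Corollary \ref{CorVMHS} as an almost immediate consequence of Lemma \ref{LemMHSofCY3s} together with the definitions recalled in the preceding paragraphs. The key point is that $\VH^{CY}=(\VH_\Z^{CY},\mathbb{W}_\bullet^{CY},\F^\bullet_{CY})$ is a $\Z$-VMHS over $\Bbo$ whose fiber at each $b\in \Bbo$ is the graded-polarizable $\Z$-mixed Hodge structure on $H^3(X_b,\Z)$. By Lemma \ref{LemMHSofCY3s} each such fiber is in fact pure of weight $3$ (after the shift noted there, the effective weight-$1$ statement for $H^3(X_b,\Z)\cong H^1(\cu,R^2\pi_*\Z)$ corresponds to weight $3$ once the weight-$2$ local system is accounted for, exactly as in the proof) and effective with Hodge types only $(2,1)$ and $(1,2)$. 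I would first spell out that purity of every fiber forces the weight filtration of the variation to be trivial: $\mathbb{W}_\bullet^{CY}$ is a filtration by sub-VMHS that fiberwise must be the trivial filtration jumping only at $3$, hence $\mathbb{W}_2^{CY}=0$ and $\mathbb{W}_3^{CY}=\VH^{CY}$, which is what is meant by ``$\mathbb{W}_\bullet^{CY}=0$'' (the filtration carries no nontrivial graded pieces besides the top one).

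Next I would record the statement about the Hodge filtration. Since each fiber has Hodge types $(p,q)$ with $p+q=3$ and $0\le p,q$, the only nonzero pieces of $F^\bullet H^3(X_b,\C)$ are $F^2=H^{2,1}$ and $F^1=H^3(X_b,\C)$, with $F^0=H^3$ and $F^3=0$. Globalizing, $\F^\bullet_{CY}$ is a filtration of the holomorphic bundle $\VH^{CY}_{\Oo}=\VH_\Z^{CY}\otimes_\Z\Oo_{\Bbo}$ with $\F^0_{CY}=\VH^{CY}_{\Oo}$, $\F^1_{CY}=\VH^{CY}_{\Oo}$, $\F^2_{CY}$ a proper holomorphic subbundle, and $\F^3_{CY}=0$; in particular it is a ``second-step'' (i.e.\ length-one nontrivial) filtration, situated in the range relevant for an abstract Seiberg-Witten differential after the appropriate Tate twist. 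I would just note that these assertions follow fiberwise from Lemma \ref{LemMHSofCY3s} and hold for the variation because the $\F^p_{CY}$ are by construction holomorphic subbundles.

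Finally, for admissibility: since $\pib^\circ:\Xo\to\Bbo$ is quasi-projective (indeed comes from an algebraic family, Theorem \ref{ThmCY3s}), the associated VMHS is admissible by Saito's theory of mixed Hodge modules, or alternatively by the criterion of \cite{BrosnanElZein}; this is precisely the input already invoked in the proof that $\VH^{CY}$ is a graded-polarizable $\Z$-VMHS, so I would simply point back to that and to the discussion in Section \ref{SectProof}. One could also argue more directly: a graded-polarizable $\Z$-VMHS which is pure is automatically a polarizable $\Z$-VHS, and polarizable VHS over a quasi-projective base are admissible (local monodromy is quasi-unipotent and the relative monodromy weight filtration exists trivially since there is only one weight). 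I do not expect any real obstacle here; the only point requiring a little care is the bookkeeping of the weight shift between the ``effective, weight $1$'' phrasing of Lemma \ref{LemMHSofCY3s} (which is about $H^1$ of a weight-$2$ coefficient system, hence weight $3$) and the ``pure of weight $3$'' conclusion, so I would make that normalization explicit at the start of the proof.
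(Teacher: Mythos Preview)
Your proposal is correct and follows the same approach as the paper: the corollary is an immediate consequence of Lemma~\ref{LemMHSofCY3s} (fiberwise purity with Hodge types $(2,1)$ and $(1,2)$), and admissibility then follows because a polarizable VHS is automatically admissible by Schmid's results (\cite{SchmidVHS}), exactly as the paper notes in the paragraph following the corollary. Your explicit bookkeeping of the weight-$1$/weight-$3$ normalization is a welcome clarification of what is a slightly confusing statement in Lemma~\ref{LemMHSofCY3s}.
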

%The property of \emph{admissibility} is rather technical but important (\cite{SteenbrinkZucker}, \cite{KashiwaraVMHS}), not only for VMHS, but also in the theory of mixed Hodge modules.  
%It means that the VMHS degenerates in a controlled way (at infinity). 
%For VHS, this is automatically satisfied (\cite{SchmidVHS}), which explains the second statement of  Corollary \ref{CorVMHS}. 
The upshot is that each intermediate Jacobian
\begin{equation*}
J^2(X_b)=H^3(X_b,\C)/(F^2H^3(X_b,\C)+H^3(X_b,\Z)),\quad b\in \Bbo,
\end{equation*}
is an abelian variety in contrast to the case of non-rigid compact Calabi-Yau threefolds. 
Moreover, the intermediate Jacobian fibration  
\begin{equation*}
\begin{tikzcd}
 J^2(\X^\circ /\Bbo):=J(\VH^{CY})\ar[r] & \Bbo
\end{tikzcd} 
\end{equation*}
over $\Bbo$ is a family of abelian varieties.
Here and in the following we often suppress the necessary Tate twist to make $\VH^{CY}$ into a VHS of weight $1$. 
\\
In order to make the relation to $\BCFG$-Hitchin systems, i.e. where the Dynkin diagram of the structure group is of type $\mathrm{B}_k$, $\mathrm{C}_k$, $\mathrm{F}_4$, $\mathrm{G}_2$ ($\BCFG$ for short), we need to consider the $\Cc$-invariants $(\VH^{CY})^\Cc\subset \VH^{CY}$.
This is a polarizable sub-$\Z$-VHS of weight $3$ which again only has a two-step Hodge filtration. 
Hence 
\begin{equation}\label{eq:JC}
\begin{tikzcd}
J_{\Cc}^2(\Xo):=J((\VH^{CY})^{\Cc})\ar[r] & \Bbo
\end{tikzcd}
\end{equation}
is a family of abelian varieties. 
%Note that its fibers $J^2_{\Cc}(X_b)$ do not coincide with the $\Cc$-invariants $J^2(X_b)^\Cc$ in general because the latter might have several connected components whereas the former is always connected. 

\subsection{Period map and abstract Seiberg--Witten differential}\label{eq:periodmap}
The section $\bm{s}\in H^0(\X, K_{\pib_1} \otimes (\mathrm{pr}_1\circ \pib_1)^*K_\Sigma)$ of (\ref{SectionS}) yields the period map
\begin{equation*}
	\rho_{\bm{s}}:\Bbo \to \VH_{\Oo}^{CY},\quad b\mapsto [\bm{s}_{|X_b} ]
\end{equation*}
on $\Bbo$.
By the $\Cc$-invariance of $\bm{s}$, it maps to the $\Cc$-invariant part of $\VH_{\Oo}^{CY}$.
\begin{prop}\label{p:ncintsys}
	The period map $\rho_{\bm{s}}\in H^0(\Bbo, (\VH_{\Oo}^{CY})^{\Cc})$ is an abstract Seiberg--Witten differential.
	In particular, $J_{\Cc}^2(\X^\circ)\to \Bbo$ carries the structure of an integrable system called \emph{non-compact Calabi--Yau integrable system}.
\end{prop}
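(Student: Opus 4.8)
The plan is to verify the single condition in the definition of an abstract Seiberg-Witten differential, namely that the map
\begin{equation*}
\phi_{\rho_{\bm{s}}}\colon T\Bbo \to \F^1_{CY}\VH^{CY}_{\Oo},\quad X\mapsto \nabla_X\rho_{\bm{s}},
\end{equation*}
is an isomorphism, where $\nabla$ is the Gau\ss-Manin connection on $\VH^{CY}_{\Oo}$; everything else (the Lagrangian structure, hence the integrable-system statement) then follows from Proposition \ref{CorAbstractSWdifferential} together with Corollary \ref{CorVMHS}, which guarantees that $\VH^{CY}$ (and its $\Cc$-invariant part) is pure of weight $3$ with a two-step Hodge filtration, so that after the appropriate Tate twist it is a weight-$1$ VHS to which Proposition \ref{CorAbstractSWdifferential} applies. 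Since $\rho_{\bm{s}}$ is $\Cc$-invariant, $\phi_{\rho_{\bm{s}}}$ lands in $(\F^1_{CY}\VH^{CY}_{\Oo})^{\Cc}=\F^2_{CY}(\VH^{CY})^{\Cc}\otimes\Oo$ (using purity of weight $3$, $\F^1=\F^2$ on the graded pieces that survive), and the claim is that it is an isomorphism of this bundle with $T\Bbo$. A dimension count is the first sanity check: $\dim\Bbo=\dim\Bb(\cu,G)=\dim H^0(\cu,\Ub)$, and by Lemma \ref{LemMHSofCY3s} and Proposition \ref{AlternativeViaZucker}-type identifications the fiber of $\F^2(\VH^{CY})^{\Cc}$ has the same dimension as the fiber $\F^1 H^1(\tcu_b,\tfr)^W$ of the Hitchin VHS, which matches $\dim\Bbo$ by Corollary \ref{SWHitchin}; so the two bundles have equal rank and it suffices to prove injectivity (or surjectivity) fiberwise.

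The key technical step is to reduce the computation of $\nabla_X\rho_{\bm{s}}$ to the local Calabi-Yau model. Concretely, I would argue as in the proof of Theorem 3 of \cite{DM1}: by Griffiths transversality, $\nabla_X\rho_{\bm{s}}$ lands in $F^2/F^3$ at each point, and the Kodaira-Spencer/cup-product description gives $\nabla_X\rho_{\bm{s}} = \kappa(X)\cup [s_b]$, where $\kappa$ is the Kodaira-Spencer map of the family $\Xo\to\Bbo$ and $[s_b]\in F^3 H^3(X_b)=H^0(X_b,K_{X_b})$ is the class of the $\Cc$-invariant volume form from Theorem \ref{ThmCY3s}. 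Because $[s_b]$ is a nowhere-vanishing section of $K_{X_b}$, contraction with it is an isomorphism $H^1(X_b,T_{X_b})\xrightarrow{\ \cong\ } H^2(X_b,\Oo_{X_b})$ onto the relevant Hodge piece; so $\phi_{\rho_{\bm{s}}}$ being an isomorphism is equivalent to the Kodaira-Spencer map $\kappa\colon T_b\Bbo\to H^1(X_b,T_{X_b})$ being an isomorphism onto the $\Cc$-invariant part that pairs nontrivially, i.e. to a completeness statement for the family $\Xo\to\Bbo$. This completeness is where the geometry of the construction in Section \ref{SectNcCY3s} enters: using the factorization (\ref{factorizationpib}), the fiber $X_b$ is built by pulling back the semi-universal $\C^*$-deformation $\sigma\colon S\to\tfr/W$ of the $\Delta$-singularity along the section $b\colon\cu\to\Ub^1$, and the deformations of $X_b$ that matter are governed fiberwise by the base $\tfr/W$ of the semi-universal deformation, which is exactly the datum parametrized (after twisting by $K_\cu$) by $H^0(\cu,\Ub)=\Bbo$. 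So the Kodaira-Spencer map is modeled on the tautological identification of $T_b\Bbo$ with (an invariant part of) $H^0(\cu,\text{Kodaira-Spencer bundle of }\sigma)$, and semi-universality of $\sigma$ makes this an isomorphism.

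The main obstacle I expect is precisely this last identification in the presence of the $\Cc$-action and of the mildly singular fibers of $\pib_1^1$. On the open part $\cu^\circ$ everything is smooth and the local model is the resolved Slodowy slice, where the computation is the classical one; but the fibers $X_b$ acquire $A_1$, $A_1\times A_1$ or $A_1^{\times 3}$ singularities along $Br_b$ (see (\ref{ExSingConfg})), so one must check that neither the volume form $s_b$ nor the Kodaira-Spencer computation is affected — i.e. that the period integral and its derivative can be computed away from the singular locus, using that $Br_b$ has real codimension $2$ and that $s_b$ is a genuine (nowhere-vanishing) section of the honest canonical sheaf $K_{X_b}$ of the Gorenstein threefold, not merely a form on the smooth locus. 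A clean way around this, which I would adopt, is to instead work with the smooth resolved family $\Xt\to\Bb$ of Theorem \ref{ThmCY3s}: there the Kodaira-Spencer/cup-product argument is unobstructed, one has the $\Cc$-invariant volume form $\tilde s_b$, and the comparison $H^3(X_b)\cong H^3(\tilde X_b)$ (which will be re-proved via mixed Hodge modules in Section \ref{SectProof}, but can be seen here directly because the resolution contracts only rational curves, hence induces an isomorphism on $H^3$) transports the statement back to $\Xo\to\Bbo$. Either way, once $\phi_{\rho_{\bm{s}}}$ is shown to be an isomorphism, Proposition \ref{CorAbstractSWdifferential} (in the index-$k$ form of Remark \ref{RemIndex}, with $k=1$ as recorded in Example \ref{ExCCY}) produces the Lagrangian structure on $\mathcal{J}^2_{\Cc}(\Xo/\Bbo)\to\Bbo$ and the proof is complete.
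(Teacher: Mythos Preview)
Your approach is genuinely different from the paper's, and it contains a concrete error.

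The paper does \emph{not} prove the isomorphism $T\Bbo\to(\F^2_{CY})^{\Cc}$, $v\mapsto\nabla_v\rho_{\bm{s}}$, directly. Instead the proof is deferred: it is established \emph{a posteriori} in the proof of Corollary~\ref{BCFGglobalIso}, after Theorem~\ref{ThmVHS} has supplied an isomorphism $(\VH^{CY})^{\Cc}\cong\VH^H_{ad}(-1)$ of $\Z$-VHS. One then checks that under this isomorphism $\rho_{\bm{s}}$ (via the monomorphism $\Psi^*:\VH^{CY}_{\Oo}\hookrightarrow\tilde\VH^{CY}_{\Oo}$ and the identification~(\ref{tildeVHC})) is carried to the Hitchin Seiberg--Witten differential $\bm{\lambda}$, which is already known to be an abstract Seiberg--Witten differential by Corollary~\ref{SWHitchin}. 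So the logic runs through the comparison theorem, not through any direct Kodaira--Spencer or completeness argument for the Calabi--Yau family.

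Your direct route has a specific gap. You write that ``the comparison $H^3(X_b)\cong H^3(\tilde X_b)$ \dots\ can be seen here directly because the resolution contracts only rational curves''. This is false: the map $\tilde X_b\to X_b$ is \emph{not} birational. By construction $\tilde X_b$ fibres over the cameral curve $\tSigma_b$ while $X_b$ fibres over $\cu$, and the induced map $\tilde X_b\to X_b$ is generically $|W|$-to-$1$ (over an unbranched point $x\in\cu$ the $|W|$ fibres $\tilde S_{\tilde x}$ each map isomorphically onto $S_{b(x)}$). Correspondingly $H^3(\tilde X_b,\C)\cong H^1(\tSigma_b,\tfr_h)$ (see~(\ref{tildeVHC})) is strictly larger than $H^3(X_b,\C)$, which injects into its $W$-invariant part. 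Hence you cannot transport the Kodaira--Spencer computation from $\tilde X_b$ back to $X_b$ in the way you propose; on $\tilde X_b$ the target bundle has the wrong rank, and the map $T\Bbo\to\F^2\tilde\VH^{CY}_{\Oo}$ is only injective, not surjective. The paper handles this mismatch precisely by identifying the image with $\VH^H_{\Oo}$ and invoking the Hitchin side.

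Two smaller points. First, even on the unresolved (smooth, non-compact) threefold $X_b$ the identification $F^2H^3(X_b)\cong H^1(X_b,\Omega^2_{X_b})$ you implicitly use is not automatic; the paper explicitly says the Donagi--Markman argument for compact Calabi--Yau threefolds does not immediately apply here, and a completeness statement for $\Xo\to\Bbo$ in the non-compact setting would itself require justification. Second, the index here is $0$, not $1$: by Lemma~\ref{LemMHSofCY3s} the intermediate Jacobians $J^2(X_b)$ are abelian varieties, unlike the compact case of Example~\ref{ExCCY}.
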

%This system already appeared in \cite{DDP} in the $\ADE$-case ($\Cc=1$).
\begin{proof}
	We will see a posteriori, see Theorem \ref{thm:main}, that $\rho_{\bm{s}}$ is an abstract Seiberg--Witten differential, i.e. 
	\begin{equation*}
	T\Bbo \to (F^2_{CY})^\Cc,\quad v\mapsto \nabla_v \rho_{\bm{s}},
	\end{equation*}
	is an isomorphism. 
	Hence $J^2_{\Cc}(\X^\circ/\Bbo)\to \Bbo$ carries the structure of an integrable system by Proposition \ref{CorAbstractSWdifferential}.
\end{proof}
%\begin{rem}\label{RemncCY}
%	In \cite{KS1} Kontsevich and Soibelman gave a class of quasi-projective non-singular Calabi--Yau threefolds which yield integrable systems. 
%	Their approach is deformation-theoretic and therefore closely related to \cite{DM1}.
%	However, it is in general difficult to describe the appearing Calabi--Yau threefolds more concretely in the deformation-theoretic approach.  
%	We leave it for future work to understand how the above families $\X\to \Bb$ fit into their framework. 
%\end{rem}
An analogous result holds for the family $\tilde{\pib}:\Xt\to \Bb$.
More precisely, the methods of the proof of Lemma \ref{LemMHSofCY3s} show that $R^3\tilde{\pib}_*\Z$ carries the structure of a polarizable $\Z$-VHS which is pure of weight $1$ up to a Tate twist. 
We denote it by 
\begin{equation}\label{VMHSXtilde}
\tilde{\VH}^{CY}=(\tilde{\VH}_{\Z}^{CY},\tilde{F}^\bullet_{CY}).
\end{equation}
Note that we do not have to restrict to $\Bbo$ because $\tilde{\pib}$ is smooth. 

\section{Isomorphism with the Hitchin system}\label{SectIsoHit}
As before let $\Delta=\Delta_{h,\Cc}$ be any irreducible Dynkin diagram\footnote{We exclude the somewhat exceptional case $\Delta=\Delta_h=\mathrm{A}_1$ because it has been extensively treated in \cite{DDD}.}.
We denote by $G=G_{ad}(\Delta)$ the simple adjoint complex Lie group with Dynkin diagram $\Delta$ and by $\gfr$ its Lie algebra.
Let $^L S\subset ^L \gfr$ be a Slodowy slice in the Langlands Lie algebra $^L \gfr$ dual to $\gfr$, i.e. it is a semi-universal deformation of the $\Delta^\vee$-singularity. 
In Section \ref{ss:threefolds} we have constructed a family $\mathcal{X}\to \Bb(\cu, G)$ of quasi-projective Gorenstein threefolds with $\Cc$-trivial canonical class from $^L S$. 
As in Section \ref{ss:hitchinadjsc} we identify $\Bb(\cu,^L G)=\Bb(\cu,G)$ and the corresponding universal cameral curves. 

In the previous section, we have seen that the $\Cc$-invariant intermediate Jacobian fibration 
$
J^2_{\Cc}(\Xo)\to \Bbo
$
carries the structure of an algebraic integrable system. 
It is determined by the $\Z$-VHS $(\VH^{CY})^\Cc$ with the abstract Seiberg--Witten differential $\rho_{\bm{s}}$.
On the other hand, the Hitchin system $\Hit:\Hig^\circ(\cu, G_{ad})\to \Bbo$ over $\Bbo$ is determined by the $\Z$-VHS $\VH_G$ with the abstract Seiberg--Witten differential $\bm{\lambda}$, see Proposition \ref{SWHitchin}.

In this section, we show that 
\begin{equation}\label{eq:isovhsabsw}
((\VH^{CY})^\Cc,\rho_{\bm{s}}) \cong (\VH_G,\bm{\lambda})
\end{equation}
as $\Z$-VHS of weight $1$ with abstract Seiberg--Witten differentials. 
In particular, this implies that $J^2_{\Cc}(\Xo)$ is isomorphic to $\Hig^\circ(\cu,G)$ as smooth algebraic integrable systems over $\Bbo$ by Proposition \ref{CorAbstractSWdifferential}.
The proof of (\ref{eq:isovhsabsw}) is divided into two steps:

\begin{enumerate}[label=\Roman*)]
	\item\label{step1}
	There is an isomorphism $\Psi: \VH_{G,\Z} \to (\VH_{\Z}^{CY})^\Cc$ of $\Z$-local systems. 
	\item\label{step2}
	The isomorphism $\Psi$ respects the Hodge filtrations and $\Psi(\bm{\lambda})=\rho_{\bm{s}}$.
\end{enumerate}

\begin{rem}
\begin{enumerate}[label=\alph*)]
\item
Note that we use $\Delta^\vee$-singularities to realize $G$-Hitchin systems as non-compact Calabi--Yau integrable systems where $G=G_{ad}(\Delta)$. 
This is not mysterious: In Remark \ref{rem:folding} we have seen that the $\Delta^\vee$-singularity yields the cocharacter lattice $\bLambda_{G}$ of $G$ via its minimal resolution. 
In turn $\bLambda_G$ is a crucial input to obtain the generic Hitchin fibers for $G$, see Corollary \ref{cor:Hodge}. 
\item 
The step \ref{step1} and the first half of \ref{step2} can be completed in one go by working with mixed Hodge modules. 
These lift the Leray spectral sequence, essential for step \ref{step1}, to a spectral sequence of variations of mixed Hodge structures. 
Since it is more technical, we do not further pursue it here but refer to \cite[Section 5.2]{Beck-thesis}.
\end{enumerate}
\end{rem}

\subsection{Isomorphism of local systems}
We show step \ref{step1} by using the Leray spectral sequence. 

\begin{lem}\label{LemLeray}
	Let $\pib^\circ=\pib_2^\circ \circ \pib_1^1:\mathcal{X}^\circ\to \Bbo$ be as in (\ref{factorizationpib}). 
	Then the Leray spectral sequence degenerates and gives isomorphisms of abelian sheaves
	\begin{equation}\label{LemLerayEq}
	R^3\pib_*^\circ \Z\cong R^1\pib^\circ_{2*}R^2\pib^1_{1*}\Z, 
	\end{equation}
	Moreover, it yields the natural morphism 
	\begin{equation}\label{Morphism:R1R2C}
	\iota: R^1\pib^\circ_{2*}((R^2\pib^1_{1*}\Z)^\Cc)\rightarrow (R^3\pib_*^\circ \Z)^\Cc.
	\end{equation}
\end{lem}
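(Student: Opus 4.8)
The plan is to verify the degeneration of the Leray spectral sequence for $\pib^\circ=\pib_2^\circ\circ\pib_1^1$ by a fiberwise-dimension argument, exactly mimicking the local computation in the proof of Lemma \ref{LemMHSofCY3s} but now carried out over the base $\Bbo$. First I would analyze the sheaves $R^q\pib^1_{1*}\Z$. Since $\pib_1^1$ is obtained by base change from $\bsigma^1:\Sc^1\to\Ub^1$, whose fibers are deformations of the $\Delta_h$-singularity with at worst $\mathrm{A}_1$, $\mathrm{A}_1\times\mathrm{A}_1$ or $\mathrm{A}_1\times\mathrm{A}_1\times\mathrm{A}_1$ singularities (see (\ref{ExSingConfg})), the fibers of $\pib_1^1$ are homotopy equivalent to bouquets of $2$-spheres (possibly with some spheres contracted over the branch divisor $Br\cap(\cu\times\Bbo)$). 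In particular $R^q\pib^1_{1*}\Z=0$ for $q\neq 0,2$, with $R^0\pib^1_{1*}\Z=\Z$ the constant sheaf. Here I would use that $\pib_1^1$, although not proper, is locally over $\cu\times\Bbo$ a contraction onto the central fiber, so the stalks of $R^q\pib^1_{1*}\Z$ are the cohomology groups of the (possibly singular) fibers, just as in Lemma \ref{LemMHSofCY3s}.

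Next, the Leray spectral sequence $E_2^{p,q}=R^p\pib^\circ_{2*}R^q\pib^1_{1*}\Z\Rightarrow R^{p+q}\pib^\circ_*\Z$ has only the rows $q=0$ and $q=2$ nonzero, and since $\pib_2^\circ=\mathrm{pr}$ is the projection $\cu\times\Bbo\to\Bbo$ with one-dimensional fibers $\cu$ of cohomological dimension $2$, the row $q=0$ contributes only in degrees $p=0,1,2$ while the row $q=2$ contributes in degrees $p=2,3,4$. The only differential that could act on $E_2^{1,2}$ is $d_3\colon E_3^{1,2}\to E_3^{4,0}$, but $E_2^{4,0}=R^4\mathrm{pr}_*\Z=0$ since $\cu$ has cohomological dimension $2$; symmetrically $E_2^{p,q}$ with source mapping into $E^{1,2}$ vanishes. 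Hence all higher differentials out of and into $E_r^{1,2}$ vanish, so $E_2^{1,2}=E_\infty^{1,2}$, and in total degree $3$ the only surviving term is $E_\infty^{1,2}$ (the term $E_\infty^{3,0}$ vanishes for the same cohomological-dimension reason). This gives the isomorphism (\ref{LemLerayEq}). For the morphism (\ref{Morphism:R1R2C}), I would observe that the $\Cc$-action is compatible with the spectral sequence (all maps in (\ref{factorizationpib}) are $\Cc$-equivariant, with $\Cc$ acting trivially on $\cu\times\Bbo$ and on $\Bbo$), so taking $\Cc$-invariants of the edge map $R^1\pib^\circ_{2*}R^2\pib^1_{1*}\Z\to R^3\pib^\circ_*\Z$ and precomposing with the natural map $R^1\pib^\circ_{2*}\big((R^2\pib^1_{1*}\Z)^\Cc\big)\to \big(R^1\pib^\circ_{2*}R^2\pib^1_{1*}\Z\big)^\Cc$ yields (\ref{Morphism:R1R2C}).

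I expect the main obstacle to be the non-properness of $\pib_1^1$: one cannot simply invoke proper base change to identify the stalks of $R^q\pib^1_{1*}\Z$ with fiber cohomology. The fix is the same local contraction argument used in Lemma \ref{LemMHSofCY3s} — choosing, around each point of $\cu\times\Bbo$, a neighborhood on which $\pib_1^1$ deformation-retracts onto the fiber over that point (using the local structure of $\bsigma$ near the $\Delta_h$-singularity and its $\mathrm{A}_1$-type degenerations, which is controlled because we work over $\Bbo$ where branch points do not collide, cf. Lemma \ref{LemSmoothDivisor}) — but it needs to be stated carefully at the level of sheaves rather than individual cohomology groups. A secondary, purely bookkeeping point is that the map in (\ref{Morphism:R1R2C}) need not be an isomorphism: taking $\Cc$-invariants does not commute with $R^1\pib^\circ_{2*}$ in general, which is why the lemma only claims a morphism; this is consistent with the subtleties about integral equivariant cohomology mentioned in the introduction, and the fact that this morphism becomes an isomorphism after $\otimes\Q$ will be handled later in Section \ref{SectProof}.
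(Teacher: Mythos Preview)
Your proposal is correct and follows essentially the same route as the paper: vanishing of $R^q\pib^1_{1*}\Z$ for $q\neq 0,2$ via the fiber topology (bouquets of $2$-spheres, possibly with some contracted), combined with the cohomological-dimension-$2$ bound on $\cu$ for $\pib_2^\circ$, forces the spectral sequence to degenerate and isolates $E_2^{1,2}$ in total degree $3$; the $\Cc$-morphism then comes from the inclusion $(R^2\pib^1_{1*}\Z)^\Cc\hookrightarrow R^2\pib^1_{1*}\Z$. The paper phrases the stalk computation as a base-change step along $i_b:\cu\hookrightarrow\cu\times\Bbo$ (reducing to the single-$b$ situation of Lemma~\ref{LemMHSofCY3s}) rather than directly invoking a local retraction, and it states full $E_2$-degeneration rather than just the degree-$3$ piece, but these are cosmetic differences.
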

\begin{proof} 
	We first consider the case without taking $\Cc$-invariants. 
	The Leray spectral sequence for $\pib^\circ=\pib^\circ_2\circ \pib_1^1$ reads as
	\begin{equation*}
	E_2^{p,q}=R^p\pib^\circ_{2*}(R^q\pib^1_{1*}\Z)\Rightarrow R^{p+q}\pib^\circ_*\Z.
	\end{equation*}
	It follows similarly as in the proof of Lemma \ref{LemMHSofCY3s} that $R^q\pib^1_{1*}\Z=0$ for $q\notin\{0,2\}$. 
	%To do so, we consider for each $b\in \Bbo$ the commutative diagram
	%\begin{equation*}
	%\begin{tikzcd}
	%Q_{t,b}\arrow[r] \arrow[d] & X_b \arrow[r] \arrow[d, "\pi_b"] &\mathcal{X}^\circ \arrow[d, "\pib^1_1"] \\
	%\{t\}\times \{b\} \arrow[rr, bend right, "i_{b,t}"] \arrow[r, "i"] & \Sigma \arrow[r, "i_b"] & \Sigma\times \Bbo 
	%\end{tikzcd}
	%\end{equation*}
	%where each of the squares is a fiber product. 
	%Using base change (for locally trivial maps), we obtain
	%\begin{equation*}
	%i_{b,t}^*(R^q\pib^1_{1*}\Z) \cong i^*i_b^*(R^q\pib^1_{1*}\Z) \cong i^*(R^q\pi_{b*} \Z)=(R^q\pi_{b*}\Z)_{t}.
	%\end{equation*}
	%From the local theory (cf. proof of Lemma \ref{LemMHSofCY3s}) we know that $(R^q\pi_{b*}\Z)_t=0$ if $q\notin \{0,2\}$ for all $t\in \Sigma$.
	%Hence the claim follows. 
	This implies that $d_2=0$ on the $E_2$-page. 
	To see that the higher differentials $d_r$, $r\geq 3$, also vanish, observe that the projection $\pib^\circ_2=pr:\Sigma\times \Bbo\to \Bbo$ is proper. Hence for any sheaf $\mathcal{F}$ on $\Sigma\times \Bbo$, the stalks of $R^p\pib^\circ_{2!}\mathcal{F}=R^p\pib^\circ_{2*}\mathcal{F}$ are given by
	\begin{equation*}
	R^p\pib^\circ_{2*}\mathcal{F}_b\cong H^p(\Sigma,i_b^*\mathcal{F}).
	\end{equation*}
	But the cohomological dimension of $\Sigma$ is $2$, so that $R^p\pib^\circ_{2*}\mathcal{F}=0$ for $p>2$. 
	This not only implies $d_r=0$ for $r\geq 3$ but also $R^3\pib^\circ_{2*}(R^0\pib^1_{1*}\Z)=0$. 
	Hence the Leray spectral sequence degenerates and 
	\begin{equation*}
	R^3\pib^\circ_*\Z\cong R^1\pib^\circ_{2*}R^2\pib^1_{1*}\Z.
	\end{equation*}
	The morphism (\ref{Morphism:R1R2C}) is induced by the inclusion $(R^2\pib^\circ_{2*} \Z)^\Cc\hookrightarrow R^2\pib^\circ_{2*}\Z$ together with the compatibility of the Leray spectral sequence with the $\Cc$-action. 
\end{proof}
The next proposition shows that the natural morphism (\ref{Morphism:R1R2C}) is an isomorphism in the case $\Cc\neq 1$ as well. 
It is non-trivial because cohomology does not commute with taking invariants in general. 

\begin{prop}\label{p:iotaiso}
	The natural morphism
	\begin{equation*}
	\iota: R^1\pib^\circ_{2*}((R^2\pib^1_{1*}\Z)^\Cc)\rightarrow (R^3\pib_*^\circ \Z)^\Cc
	\end{equation*}
	of Lemma \ref{LemLeray} is an isomorphism. 
\end{prop}

The proof is postponed to Section \ref{ss:proofprop6}.
Together with the next lemma it establishes the link to the VHS $\VH_G$ determined by the Hitchin system.

\begin{lem}\label{bKeyProp}
	Let $\Ubt^1\subset \Ubt$ and $\Ub^1\subset \Ub$ be as in (\ref{Ubt1}), (\ref{Ub1}) and $\Sc^1:=\bsigma^{-1}(\Ub^1)\subset \Sc$.
	Further denote by $\bm{q}^1:\Ubt^1\to \Ub^1$ and $\bsigma^1:\Sc^1\to \Ub^1$ the restrictions of $\bm{q}$ and $\bsigma$ respectively. 
	Then there are isomorphisms of constructible sheaves
	\begin{equation}\label{bKeyPropEq}
	R^2\bsigma^1_*\Z \cong (\bm{q}^1_*\bLambda_h)^W ,\quad (R^2\bsigma^1_*\Z)^\Cc \cong  (\bm{q}^1_*\bLambda)^W
	\end{equation} 
	Here $\bLambda_{h}=\bLambda_{G_h}$ and $\bLambda=\bLambda_G=\bLambda_{G_h}^\Cc$ for the simple adjoint complex Lie group $G_h=G_{ad}(\Delta_h)$ and $G=G_{ad}(\Delta)$ with Dynkin diagram $\Delta_h$ and $\Delta=\Delta_{h,\Cc}$ respectively. 
\end{lem}

\begin{proof}
	First of all we consider the fiberwise case, i.e. $\sigma^1:S^1\to \tfr^1/W$ and $q^1:\tfr^1\to \tfr^1/W$.
	The monodromy group of the $\Delta$-singularity $((\sigma_1^{-1}(\bar{0},x),\Cc)$ coincides with the  Weyl group $W(\Delta)$, see \cite[Section 4.3]{Slo2}.
	This group is the monodromy group of the local system $R^2\sigma^\circ_*\Z$.
	The stalk of this local system is $\bLambda_h$ so that 
	\begin{equation*}
	R^2\sigma^\circ_*\Z\cong (q_* ^\circ\bLambda_h)^W.
	\end{equation*}
	Since the $W$-action commutes with the $\Cc$-action and $\bLambda_h^\Cc=\bLambda$, see (\ref{eq:cohofDelta}), we obtain the second isomorphism in (\ref{bKeyPropEq}) over $\tfr^\circ/W$.
	
	To obtain the isomorphism over $\tfr^1/W$, let $j:\tfr^\circ/W\hookrightarrow \tfr^1/W$ be the open inclusion. 
	Using the fact that the complement of $\tfr^\circ/W$ in $\tfr^1/W$ is smooth, it is not difficult to check that the adjunction morphism
	\begin{equation*}
	a:\mathcal{F}\to j_*j^*\mathcal{F}
	\end{equation*}
	is an isomorphism for $\mathcal{F}=R^2\sigma^1_*\Z$ or $(q_*\bLambda_h)^W$ (see \cite{Beck-thesis}, Chapter 1.6). 
	\\
	For the global case, note that $\cu$ is covered by open subsets $D\subset \cu$ such that 
	\begin{equation*}
	\begin{tikzcd}
	\Sc_{|D} \ar[d, "\bsigma"'] \ar[r, "\cong"] & D\times S \ar[d, "id\times \sigma"] \\
	\Ub_{|D} \ar[r, "\cong"] \ar[r, "\cong"] & D\times \tfr/W \\
	\Ubt_{|D} \ar[u, "\bm{q}"] \ar[r, "\cong"] & D\times \tfr \ar[u, "id\times q"'].
	\end{tikzcd}
	\end{equation*} 
	The fiberwise considerations imply 
	\begin{equation}\label{LocalIsoSheaves}
	R^2(id\times \sigma^\circ)_*\Z \cong ((id\times q^\circ)_*\bLambda_G)^W. 
	\end{equation}
	These are local models for $R^2\bsigma^\circ_*\Z$ and $(\bm{q}^\circ_*\bLambda)^W$ respectively (over $\Ub_{|D}$). 
	Since $\bsigma: \Sc\to \Ub$ and $\bm{q}:\Ubt\to \Ub$ are glued with the same cocyle (which is uniquely determined by the spin bundle $L$), the isomorphism (\ref{LocalIsoSheaves}) glues to an isomorphism 
	\begin{equation*}
	R^2\bsigma^\circ_*\Z \cong (\bm{q}^\circ_*\bLambda_h)^W.
	\end{equation*}
	By pushing forward with $\bm{j}:\Ub^0\hookrightarrow \Ub^1$ and arguing as in the local case gives (\ref{bKeyPropEq}). 
\end{proof}
\begin{thm}\label{thm:isolocsys}
	The isomorphisms of Lemma (\ref{LemLeray}), Proposition and Lemma \ref{bKeyProp} combine to give an isomorphism
	\begin{equation}\label{eq:psi}
	 \Psi: (\VH_\Z^{CY})^\Cc \rightarrow \VH_{G,\Z}
	\end{equation}
	of $\Z$-local systems over $\Bbo$.
	%In particular, each cohomology group $H^3(X_b,\Z)$, $b\in \Bbo$, is torsion-free. 
\end{thm}
\begin{proof}
	Since $\pib_1^1:\Xo\to \cu\times\Bbo$ and $\bm{p}_1^1:\bm{\tSigma}^\circ\to \cu\times \Bbo$ is obtained as the pullback of $\bsigma^1:\Sc^1\to \Ub^1$ and $\bm{q}^1:\Ubt^1\to \Ub^1$ respectively, Lemma \ref{bKeyProp} implies 
	\begin{equation*}
	R^2\pib^1_*\Z\cong (	\bm{p}_*^1\bLambda_h)^W,\quad  (R^2\pib^1_*\Z)^\Cc\cong (\bm{p}_{1*}^1\bLambda)^W
	\end{equation*}
	as constructible sheaves. 
	Hence Lemma \ref{LemLeray} yields the morphism
	\begin{equation}
	\begin{tikzcd}
	\Psi^{-1}: \VH_{G,\Z}=R^1\bm{p}_{2*}^\circ(\bm{p}_*^1\bLambda)^W \ar[r,"\cong"]  & R^1\pib^\circ_{2*}(R^2\pib^1_{1*}\Z)^\Cc \ar[r, "\iota"] & (R^3\pib_{*}^\circ\Z)^\Cc=(\VH_\Z^{CY})^\Cc
	\end{tikzcd}
	\end{equation}
	Here $\iota$ is the morphism (\ref{Morphism:R1R2C}) which is an isomorphism by Proposition \ref{p:iotaiso}. 
\end{proof}
The isomorphism $\Psi$ induces the isomorphism
\begin{equation*}
\mathsf{cochar}(J^2_{\Cc}(X_b))=H^3(X_b,\Z)^\Cc\cong H^1(\cu, p_{b,*}^W\bLambda)=\mathsf{cochar}(P_{G}(b))
\end{equation*}
of cocharacter lattices for every $b\in \Bbo$, in particular an isomorphism of real tori. 
We next show that it is even an isomorphism of abelian varieties. 

\subsection{Isomorphism of algebraic integrable systems}\label{ss:isoais}
We next show that the isomorphism $\Psi:(\VH_{\Z}^{CY})^\Cc\to \VH_{G,\Z}$ is an isomorphism of $\Z$-VHS of weight $1$ which intertwines the abstract Seiberg--Witten differentials. 
By Proposition \ref{CorAbstractSWdifferential} it induces an isomorphism of algebraic integrable systems. 

\begin{thm}\label{thm:main}
	Let $G$ be a simple adjoint complex Lie group. 
	The isomorphism $\Psi: (\VH_\Z^{CY})^\Cc \rightarrow \VH_{G,\Z}$ of Theorem \ref{thm:isolocsys} is an isomorphism of polarizable $\Z$-VHS of weight $1$ (up to a Tate twist) such that 
	\begin{equation}\label{eq:psiabsw}
	\Psi(\rho_{\bm{s}})=\bm{\lambda}.
	\end{equation}
	In particular, we obtain an isomorphism 
	\begin{equation*}
	\begin{tikzcd}
	J^2_{\Cc}(\Xo)\ar[r, "\cong"] \ar[r]  & \Hig^\circ(\cu,G) 
	\end{tikzcd}
	\end{equation*}
	of algebraic integrable systems over $\Bbo(\cu,G)$.
\end{thm}
\begin{rem}\label{rem:orbifolds}
	In \cite{Beck2} we consider the family of orbifold stacks $[\X/\Cc]\to \Bb$ associated with $\X\to \Bb$ and its $\Cc$-action. 
	Moreover, we construct the family 
	\begin{equation*}
	J^2([\Xo/\Cc])\to \Bbo 
	\end{equation*}
	of orbifold intermediate Jacobian in terms of the group cohomologies $H^3_\Cc(X_b,\Z)$, $b\in \Bbo$ and show that $J^2([\Xo/\Cc])\cong J^2_{\Cc}(\Xo)$ over $\Bbo$ (\cite[Theorem 6]{Beck2}). 
	In this way $\Cc$-invariants are already `built in'. 
\end{rem}
\begin{proof}	
For each $b\in \Bbo$, the isomorphism $\Psi_b$ is the composition
\begin{equation}
\begin{tikzcd}
H^3(X_b,\Z)^\Cc \ar[r] & H^1(\cu,R^2\pi_{b,*}\Z)^\Cc \ar[r] & H^1(\cu, p_{b,*}^W \bLambda). 
\end{tikzcd}
\end{equation}
These isomorphisms are isomorphisms of Hodge structures if the middle and right-hand term are endowed with Zucker's Hodge structure, cf. the proof of Lemma \ref{LemMHSofCY3s}.
But this Hodge structure on $H^1(\cu, p_{b,*}^W\bLambda)$ coincides with $\VH_{G,b}$ by Proposition \ref{eq:relationToZucker}.
Hence $\Psi_b$ is an isomorphism of Hodge structures. 

Since $\Psi$ is an isomorphism of local systems, $\Psi_b$ is in particular compatible with the monodromy action of $\pi_1(\Bbo,b)$. 
Therefore the Rigidity Theorem for VHS (\cite[Theorem 7.24]{SchmidVHS}) implies that $\Psi$ is an isomorphism of $\Z$-VHS. 

We next prove (\ref{eq:psiabsw}).
Observe that $\tilde{\VH}_{\C}^{CY}$ of (\ref{VMHSXtilde}) is given by
\begin{equation}\label{tildeVHC}
R^3\tilde{\pib}_*\C\cong R^1\bm{p}_*R^2\tilde{\pib}_{1,*} \C\cong R^1\bm{p}_*\tfr_h.
\end{equation}
The first isomorphism follows from the Leray spectral sequence as in Lemma \ref{LemMHSofCY3s}.
The second one is a consequence of the fact that $\tilde{S}\to \tfr$ is $C^\infty$-trivial (\cite[Section 4.2]{Slo2}) and $H^2(\tilde{S}_t,\C)\cong \tfr_h$.
The section $\tilde{\bm{s}}\in H^0(\tilde{\X}, K_{\tilde{\pib}_1}\otimes (\mathrm{pr}_1\circ \bm{p}_1)^*K_\cu )$ from (\ref{SectionTildeS}) is $\Cc$-invariant and induces the period map
\begin{equation*}
\rho_{\tilde{\bm{s}}}:\Bbo\to (\tilde{\VH}^{CY}_{\Oo})^\Cc.
\end{equation*}
It is related to $\rho_{\bm{s}}$ as follows:
The natural map $\tilde{\X}\to \X$ induces a $\Cc$-equivariant morphism 
\begin{equation}\label{eq:Phi*}
\Phi^*:\VH^{CY}_{\C}\to \tilde{\VH}^{CY}_{\C}\cong R^1\bm{p}^\circ_*\tfr.
\end{equation}
It coincides with the natural inclusion $(R^1\bm{p}^\circ_*\tfr)^W\hookrightarrow R^1\bm{p}^\circ_*\tfr$ under the isomorphism $\VH^{CY}_\C\cong (R^1\bm{p}^\circ_*\tfr)^W$.
The last isomorphism in (\ref{eq:Phi*}) is a direct consequence of the fiberwise natural isomorphism $H^1(\tcu_b,\tfr_h)^\Cc\cong H^1(\tcu_b,\tfr)$, $b\in \Bbo$. 

From the construction of $\bm{s}$ and $\tilde{\bm{s}}$, it follows that $\Phi^*\circ \rho_{\bm{s}}=\rho_{\tilde{\bm{s}}}$ (after tensoring with $\Oo_{\Bbo}$).  
Then the equality 
\begin{equation*}
\rho_{\tilde{\bm{s}}}=\bm{\lambda}\in H^0(\Bbo, \VH_G)
\end{equation*}
holds which makes sense because $\VH_{G,\Oo}\subset R^1\bm{p}_*\tfr\otimes \Oo_{\Bbo}$. 
This equality follows from the construction of the Leray spectral sequence for the composition $\tilde{\pib}=\bm{p}\circ \tilde{\pib}_1$ of \emph{submersions} (cf. \cite[Chapter 3.5]{GriffithsHarris}) and the fact that both $\rho_{\tilde{\bm{s}}}$ and $\bm{\lambda}$ are obtained by a base change from the tautological section $\bm{\tau}\in H^0(\Ubt,\tilde{u}^*\Ubt)$.
Translating back to $\VH^{CY}$ gives $\Psi(\rho_{\bm{s}})=\bm{\lambda}$. 
\end{proof}

\subsection{Proof of Proposition \ref{p:iotaiso}}\label{ss:proofprop6}
In this section we prove that the natural morphism
\begin{equation*}
\iota: R^1\pib^\circ_{2*}((R^2\pib^1_{1*}\Z)^\Cc)\rightarrow (R^3\pib_*^\circ \Z)^\Cc, 
\end{equation*}
see (\ref{Morphism:R1R2C}), is an isomorphism. 
Let $b\in \Bbo$ and denote $\pi:=\pi_b:X_b\to \cu$ as well as 
\begin{equation*}
\mathcal{F}:=R^2\pi_*\Z \cong (p_{b,*}^W\bLambda_h), 
\end{equation*}
see Lemma \ref{bKeyProp}. 
Then the map $\iota_b$ induced by $\iota$ between the stalks at $b\in \Bbo$ is the natural morphism 
\begin{equation*}
\begin{tikzcd}
\iota_b:H^1(\cu,\mathcal{F}^\Cc) \ar[r] & H^1(\cu,\mathcal{F})^\Cc \cong H^3(X_b,\Z)^\Cc.
\end{tikzcd}
\end{equation*}
By abuse of notation, we denote the natural morphism $H^1(\cu,\mathcal{F}^\Cc) \to H^1(\cu,\mathcal{F})^\Cc$ by $\iota_b$ as well. 
Hence it remains to show that $\iota_b$ is an isomorphism for each $b\in \Bbo$. 

In order to do so, we explicitly compute $H^1(\cu,\mathcal{F})$: 
Let $j:\cuo\hookrightarrow \cu$ be the open inclusion of the smooth locus of $\pi:X_b\to B$. 
Then $\mathcal{F}=j_*\mathcal{F}^\circ$ for the local system 
\begin{equation*}
\mathcal{F}^\circ=R^2\pi^\circ_*\Z\cong (p_{b*}^\circ\bLambda_h)^W 
\end{equation*}
with finite monodromy. 
This explains the relevance of the following two results. 

\begin{lem}\label{LemjL}
	Let $j:\cuo\hookrightarrow \cu$ be the complement of finitely many points in a compact Riemann surface $\cu$ and $\Lc$ a local system on $\cuo$.
	Then the following holds
	\begin{align*}
	H^1(\cu, j_*\Lc)&\cong \ker[H^1(\cuo, \Lc)\to H^0(\cu, R^1j_*\Lc)] 
	%&\cong \mathrm{im}[H^1_c(\cuo, \Lc)\to H^1(\cuo, \Lc)]. 
	\end{align*}
\end{lem}
\begin{proof}
	The first isomorphism is a consequence of the five-term exact sequence coming from the Leray spectral for the open inclusion $j:\cuo\hookrightarrow \cu$. 
	Its first three (non-trivial) terms are given by 
	\begin{equation}\label{OpenLeray}
	\begin{tikzcd}
	0 \arrow[r] & H^1(\cu, j_*\Lc)  \arrow[r] & H^1(\cuo, \Lc) \arrow[r, "\beta"] & H^0(\cu, R^1j_*\Lc)
	\end{tikzcd}
	\end{equation}
	yielding the first description. 
	%For the second isomorphism see \cite{Looij2}. 
\end{proof}
\begin{rem}\label{betaCinvariant}
	Clearly, $R^1j_*\Lc$ is a skyscraper sheaf supported on $Br=\cu-\cu^\circ$. 
	If $D_j\subset \cu$ is a small disc around $b_j\in Br$, then a local computation shows that $H^1(D_j,\Lc)=L_{\rho_j}$ are the coinvariants in $L$.
	Taking the limit over all such discs yields
	\begin{equation*}
	R^1j_*\Lc= \bigoplus_{k=1}^m (R^1j_*\Lc)_{y_k}\cong  \bigoplus_{k=1}^m L_{\rho_k}. 
	\end{equation*}
	The morphism $\beta:H^1(\cuo, \Lc)\to \bigoplus_k L_{\rho_k}$ in (\ref{OpenLeray}) associates to a class its values at the stalks. 
	In particular, $\beta$ is $\Cc$-equivariant so that the $\Cc$-action on $H^1(\cu, j_*\Lc)=\ker \beta$ is induced by the one on $H^1(\cuo, \Lc)$. 
\end{rem}
Hence as a first step to compute $H^1(\cu,j_*\mathcal{L})$ for $\mathcal{L}=\mathcal{F}^\circ$ we have to determine $H^1(\cuo, \mathcal{L})$. Let 
\begin{equation*}
\begin{tikzcd}[column sep=scriptsize] \cu-\cuo=\{y_1,\dots, y_n\}\arrow[r, hookrightarrow, "i"] & \cu\end{tikzcd}
\end{equation*} 
be the closed embedding. 
As in \cite{DP} it is convenient to add an extra point $y_0$ to $Br$. 
To give a presentation of $\pi_1(\cuo-\{y_0\}, \mathfrak{o})$ for a base point $\mathfrak{o}$, choose an arc system $\delta_1,\dots, \delta_{2g}, \gamma_0, \gamma_1,\dots, \gamma_m$ where $\delta_1, \dots, \delta_{2g}$ give generators of $\pi_1(\cu, \mathfrak{o})$ and $\gamma_j$ are homotopic to loops around $\mathfrak{o}$. 
Then we have the well-known presentation
\begin{align*}
\pi_1(\cuo-\{y_0\},\mathfrak{o})&=\left\langle \delta_1,\dots, \delta_{2g}, \gamma_0,\dots, \gamma_m~\Bigg|~\gamma_0=\prod_{i=1}^{g} [\delta_i,\delta_{i+g}]\prod_{j=0}^m \gamma_j \right\rangle\\
&=\left\langle \delta_1,\dots, \delta_{2g}, \gamma_1,\dots, \gamma_m\right \rangle,
\end{align*}
We now fix an isomorphism $\Lc_\mathfrak{o}\cong L$ once and for all and denote by $\rho_i=mon(\gamma_i), w_j=mon(\delta_j)\in \Aut(L)$ the monodromy transformation corresponding to $\gamma_i$ and $\delta_j$ respectively. 
Note that $\rho_0=mon(\gamma_0)=id_L$.

The next proposition is a generalization of Proposition 6.5 in \cite{DP} where the case $\Lc=(p^\circ_*\bLambda)^W$ is discussed. 
Its proof is omitted because it works similarly as the on in loc. cit. 
We need a more general statement because we work with $R^2\pi^\circ_*\Z\cong (p^\circ_*\bLambda_h)^W$, i.e. $W$ acts on the larger cocharacter lattice $\bLambda_h=\bLambda_{G_h}$ for $G_h=G_{ad}(\Delta_h)$ not just $\bLambda_G=\bLambda_h^\Cc$.  
\begin{prop}\label{PropDescribingH1}
	Let $p:\tcu\to \cu$ be a smooth cameral curve.
	Further let $\Lc$ be a local system over $\cuo$ with stalk $L\cong \Lc_{\mathfrak{o}}$ and assume $(p^\circ)^*\Lc\cong L_{\tcu^\circ}$.
%	\begin{enumerate}[label=\roman*)]
%		\item There is a natural isomorphism 
%		\begin{equation*}
%		H^1(\cuo-\{y_0\},\Lc)\cong \frac{L^{2g+m}}{(1-w_1,\dots, 1-w_{2g},1-\rho_1,\dots, 1-\rho_m)L}.
%		\end{equation*}
		Then there is a non-canonical isomorphism 
		\begin{equation*}
		H^1(\cuo-\{y_0\},\Lc)\cong H^1(\cu, L)\oplus \frac{L^{m}}{(1-\rho_1,\dots, 1-\rho_m)L}.
		\end{equation*}
\end{prop}
With these results at hand, we finish the proof of Proposition \ref{p:iotaiso}:
\begin{proof}[End of proof of Proposition \ref{p:iotaiso}]
	Lemma \ref{LemjL} and Remark \ref{betaCinvariant} imply that it is sufficient to show that the map 
	\begin{equation*}
	H^1(\cuo-\{y_0\}, \Lc^\Cc)\to H^1(\cuo-\{y_0\},\Lc)^\Cc\subset H^1(\cuo-\{y_0\}, \Lc),
	\end{equation*}
	induced by the inclusion $\Lc^\Cc=(j^*\F)^\Cc\hookrightarrow \Lc=j^*\F$, is an isomorphism for $\F=R^2\pi_*\Z\cong (p_*\bLambda_h)^W$. 
	By Proposition \ref{PropDescribingH1} this amounts to showing that the natural map 
	\begin{equation}\label{iotaFactors}
	\tau:
	H^1(\cu, \bLambda)\oplus \frac{\bLambda^m}{(1-\rho_1,\dots, 1-\rho_m)\bLambda} \longrightarrow \left( H^1(\cu, \bLambda_h)\oplus \frac{\bLambda^m_h}{(1-\rho_1,\dots, 1-\rho_m)\bLambda_h}\right)^\Cc
	\end{equation}
	is an isomorphism. 
	Here we have fixed isomorphisms $\Lc_{\mathfrak{o}}\cong \bLambda_h$ and $\Lc^\Cc_{\mathfrak{o}}\cong \bLambda$ as before. 
	Since $p_b:\tcu_b\to \cu$ has only simple ramification, $\rho_j=s_{\alpha_j}\in W$ for roots $\alpha_j$. \\
	Of course, $\tau$ preserves the respective first factors in (\ref{iotaFactors}) giving an isomorphism 
	\begin{equation*}
	H^1(\cu,\bLambda)\cong H^1(\cu, \bLambda_h)^\Cc.
	\end{equation*}
	So it remains to check the second factors. 
	For injectivity, assume $\tau([\lambda_1,\dots, \lambda_m])=0$. 
	This happens iff there exists $\mu\in \bLambda_h$ such that 
	\begin{equation*}
	\lambda_i=(1-\rho_i)\mu=\langle \alpha_i,\mu\rangle  \alpha_i^\vee \in \bLambda\subset \bLambda_h,\quad \forall i=1, \dots, m. 
	\end{equation*}
	We have to exclude the case that $\langle \alpha_i, \mu \rangle \notin \langle \alpha_i, \bLambda\rangle\subset \Z$. 
	However, this is impossible because $1\in \langle \alpha_i, \bLambda\rangle$, since $G$ is adjoint.
	Hence $\tau([\lambda_1,\dots, \lambda_m])=0$ implies $[\lambda_1,\dots, \lambda_m]$.
	
	For the surjectivity of $\tau$, assume $[\lambda_1,\dots, \lambda_m]_h\in \bLambda_h^m/(1-\rho_1,\dots, 1-\rho_m)\bLambda_h$ such that 
	\begin{align*}
	&c \cdot [\lambda_1,\dots, \lambda_m]_h=[\lambda_1,\dots, \lambda_m]_h \\ 
	\Leftrightarrow~ &c\cdot \lambda_i-\lambda_i=\langle \alpha_i,\mu\rangle \alpha_i^\vee, \quad \forall i=1, \dots, m
	\end{align*}
	for some $\mu\in \bLambda_h$ and all $c\in \Cc$. 
	For the moment assume $\Cc=\Z/2\Z$. 
	Then using $\alpha_i^\vee\in \bLambda=\bLambda_h^\Cc$ we have 
	\begin{equation*}
	\sigma\cdot (\sigma\cdot \lambda_i-\lambda_i)=\lambda_i-\sigma\cdot \lambda_i=\sigma\cdot \lambda_i-\lambda_i 
	~\Leftrightarrow~ 2(\sigma\cdot \lambda_i-\lambda_i)=0. 
	\end{equation*}
	Hence $\lambda_i=\sigma\cdot\lambda_i$ for all $i=1,\dots, m$ so that $\lambda_i\in \bLambda_h^\Cc=\bLambda$. 
	In other words, $[\lambda_1,\dots, \lambda_m]_h$ is in the image of $\tau$.
	The case $\Cc=S_3$ works similarly by taking generators of order $2$ and $3$. 
	Therefore $\tau$ is an isomorphism in all cases so that we obtain an isomorphism $H^1(\cu,R^2\pi_*\Z)^\Cc)\cong H^1(\cu,(p_*\bLambda)^W)$ and hence $J^2_\Cc(X_b)\cong P_b$ as \emph{real} tori. 
	
	The functoriality of Zucker's Hodge structure applied to the inclusion $\Lc^\Cc\hookrightarrow \Lc$ (of polarized $\Z$-VHS of weight $2$ over $\cuo$) implies that the induced monomorphism from above, 
	\begin{equation*}
	\begin{tikzcd}
	H^1(\cu, (j_*\Lc)^\Cc)\arrow[r] &  H^1(\cu, j_*\Lc)^\Cc
	\end{tikzcd}
	\end{equation*}
	is a monomorphism of polarized $\Z$-Hodge structures of weight $2+1=3$. 
	%Note that $H^1(\cu, j_*\F^\circ)^\Cc$ is a Hodge substructure because $\Cc$ acts on $\F^\circ$ by Hodge morphisms. 
	Therefore
	\begin{equation*}
	H^1(\cu,(p_*\bLambda)^W)\cong H^1(\cu, (R^2\pi_*\Z)^\Cc)\cong H^1(\cu, R^2\pi_*\Z)^\Cc
	\end{equation*}
	as polarizable $\Z$-Hodge structures of weight $1$ showing that $J^2_{\Cc}(X_b) \cong P_b$ as abelian varieties. 
\end{proof}

\section{Poincar\'{e}-Verdier and Langlands duality}\label{bLanglandsDual}
Langlands duality of Hitchin systems (\cite{DP}) states, among other things, that every $G$-Hitchin system is the dual torus fibration of the corresponding $^L G$-Hitchin system over the smooth locus, cf. Corollary \ref{cor:langlandsadsc}.
%We observed in Corollary \ref{cor:cyduality} the duality statement 
%\begin{equation}\label{eq:cyduality}
%P_{^L G}^\circ \cong \overline{P}_G^\vee.
%\end{equation}
%It coincides with Langlands duality if the Dynkin diagram of $G$ is neither of type $\mathrm{B}_k$ nor $\mathrm{C}_k$. 
In this section, we show that this statement is equivalent to Poincar\'{e}-Verdier duality for the families of quasi-projective Calabi--Yau threefolds of Theorem \ref{ThmCY3s} if the simple adjoint complex Lie group $G=G_{ad}(\Delta)$ does not have Dynkin diagram $\Delta=\mathrm{B}_k$.
Since many of the arguments work analogously as in Section \ref{SectIsoHit}, we are brief and omit details.

\subsection{Homology intermediate Jacobian}
As before let $\Delta=\Delta_{h,\Cc}$ be any irreducible Dynkin diagram and $G=G_{ad}(\Delta)$ the corresponding simple adjoint complex Lie group. 
Further let $\pib:\X\to \Bb=\Bb(\cu,G)$ be a family of Gorenstein threefolds with $\Cc$-trivial canonical class constructed from a $\Delta^\vee$-singularity, see Theorem \ref{ThmCY3s}.
If $b\in \Bbo$, then Poincar\'{e} duality implies 
\begin{equation}\label{eq:poincareXb}
(H^3(X_b,\Q))^\vee:=\mathrm{Hom}_{MHS}(H^3(X_b,\Q),\Q(-3)) \cong H^3_c(X_b,\Q)
\end{equation}
as mixed Hodge structures. 
By Lemma \ref{LemMHSofCY3s}, the mixed Hodge structure on $H^3_c(X_b,\Z)$ is a Tate twist of a pure Hodge structure of weight $1$. 
Hence the $\Cc$-coinvariant homology intermediate Jacobian $J_2^\Cc(X_b)$ 
\begin{gather}\label{eq:J_cc2}
J^\Cc_2(X_b):=H^3_c(X_b,\C)_\Cc/(F^2H^3_c(X_b,\C)_\Cc+H^3_c(X_b,\Z)_{\Cc,\tf})
\end{gather}
is an abelian variety. 
The natural map 
\begin{equation}\label{eq:isogenyJ2C}
J_2^\Cc(X_b)\to J^2_{\Cc}(X_b)^\vee
\end{equation}
induced by (\ref{eq:poincareXb}) is an isogeny. 
To obtain a relative statement, observe that $\pib^\circ$ induces a variation of mixed Hodge structures $\VH_{CY}^\Cc$ with underlying local system
\begin{equation*}
\VH_{CY,\Z}^\Cc=(R^3\pib^\circ_! \Z)_{\Cc,\tf}.
\end{equation*}
It is pure of weight $1$ (up to a Tate twist).
Poincar\'{e}-Verdier duality applied to $\pib^\circ$ gives the isogeny
\begin{equation}\label{eq:isogenyj2}
\jmath:J(\VH_{CY}^\Cc)\to J(\VH_\Cc^{CY})^\vee
\end{equation} 
of families of abelian varieties over $\Bbo$, globalizing (\ref{eq:isogenyJ2C}).
In particular, $\VH_{CY}^\Cc$ and $(\VH_\Cc^{CY})^\vee$ are isogenous to each other. 
Hence
\begin{equation*}
J_2^\Cc(\Xo):=J(\VH_{CY}^\Cc)\to \Bbo
\end{equation*}
carries the structure of an algebraic integrable system which is governed by the abstract Seiberg--Witten differential $\rho_{\bm{s}}$, see Section \ref{eq:periodmap}. 

\subsection{Relation to Langlands duality}
For the next statement, recall that we introduced the $\Z$-VHS $\VH_G^\circ$ for any simple complex Lie group in (\ref{eq:VoG}).
If $G=G_{ad}$, then $J(\VH_G^\circ)\cong \Hig^\circ(\cu,G)$ as algebraic integrable systems. 
If $G=G_{sc}$, then $J(\VH_G^\circ)$ is not in general the $G$-Hitchin system. 
However, we next show the relation to non-compact homology Calabi--Yau integrable systems and Langlands duality. 
\begin{thm}\label{thm:langlands}
	Let $G=G_{ad}$ be a simple adjoint complex Lie group. 
	The Leray spectral sequence induces the isomorphism 
	\begin{equation}\label{eq:homj2p}
	J_2^\Cc(\Xo)\cong J(\VH_{^LG}^\circ)
	\end{equation}
	of algebraic integrable systems over $\Bbo(\cu,G)$. 
	In particular, the morphism $\jmath:J_2^\Cc(\Xo)\to J_{\Cc}^2(\Xo)^\vee$ is an isomorphism of algebraic integrable systems over $\Bbo$ if the Dynkin diagram $\Delta$ of $G$ is not of type $\mathrm{B}_k$.  
	In these cases we further obtain the isomorphism
	\begin{equation}
	\begin{tikzcd}
	J_2^\Cc(\Xo)\ar[r, "\cong"] & \Hig^\circ(\cu,^LG)
	\end{tikzcd}
	\end{equation}
	of algebraic integrable systems over $\Bbo$. 
\end{thm}

\begin{proof}
We focus on the fiberwise isomorphism. 
The global isomorphism works as in Section 5. 
The Leray spectral sequence for compactly supported cohomology, the analogue of Lemma \ref{bKeyProp} together with (\ref{eq:cohofDeltah}) give the isomorphism
\begin{equation}
H^3_c(X_b,\Z)\cong H^1(\cu, p_{b,*}^W \bLambda^\vee_{G_h}).
\end{equation}
Recall that the $\Cc$-coinvariants satisfy $\bLambda^\vee_{G_h,\Cc}\cong \bLambda^\vee_{G}=\bLambda_{^L G}$.
As in the case of $\Cc$-invariants, see Section \ref{ss:proofprop6}, one proves that 
\begin{equation*}
H^1(\cu, p_{b,*}^W\bLambda^\vee_{G_h})_{\Cc}\cong H^1(\cu,p_{b,*}^W\bLambda^\vee_{G_h,\Cc} )=H^1(\cu, p_{b,*}^W\bLambda_{^L G}).
\end{equation*}
In particular, $\mathrm{cochar}(J_2^\Cc(X_b))\cong \mathrm{cochar}(P_{^L G}^\circ(b))$ and as in the proof of Theorem \ref{thm:main} it follows that $J_2^\Cc(X_b)\cong P_{^L G}^\circ$ as abelian varieties. 

For the last two statements, recall that $J^2_\Cc(X_b)\cong P_{G}(b)$ and $P_{^L G}(b)=P^\circ_{^L G}(b)$ if $\Delta\neq \mathrm{B}_k$, see (\ref{eq:allIso}). 
In these cases, the morphism $\jmath_b:J_{\Cc}^2(X_b)\to J_2^\Cc(X_b)^\vee$ coincides with the canonical isomorphism $P_G(b)\cong P_{^L G}^\vee$ of Corollary \ref{cor:langlandsadsc}.
\end{proof}
\begin{rem}\label{rem:exceptional}
	In particular, (the smooth locus of) every $G$-Hitchin system, where $G$ is a simple adjoint or simply-connected complex Lie group, is isomorphic to a non-compact cohomology or homology Calabi--Yau integrable system except for $G=Sp(r,\C)$.
	In this case the Hitchin fiber $P_G(b)$, $b\in \Bbo(\cu, G)$, is $\overline{P}_G(b)\neq P^\circ_G(b)$, see Proposition \ref{p:cochar}.
	However, the algebraic integrable system $J_2^\Cc(\Xo)\to \Bbo(\cu, G)$ gives a geometric meaning to the fibers $P^\circ_{G}(b)$ for $G=Sp(n,\C)$.
\end{rem}

If $G=G_{ad}(\Delta)$ is the simple adjoint complex Lie group $G$ with Dynkin diagram $\Delta$, the previous theorem and Theorem \ref{thm:main} yield the commutative diagram
\begin{equation*}
\begin{tikzcd}
	J_2^\Cc(\Xo) \ar[r, "\sim "] \ar[d, "\jmath"', "\sim" labln ] & \Hig^\circ(\cu, ^L G)  \ar[d, "\bm{\ell}", "\cong"' labls] 
	\\
	J^2_{\Cc}(\Xo)^\vee \ar[r, "\cong"] & \Hig^\circ(\cu, G)^\vee 
\end{tikzcd}
\end{equation*}
of algebraic integrable systems over $\Bbo(\cu,G)$.
Here $\sim$ stands for isogenous algebraic integrable systems. 
If $\Delta\neq \mathrm{B}_k$, then these arrows are isomorphisms. 
In that sense, Poincar\'{e}-Verdier duality applied to the family $\Xo\to \Bbo$ recovers the Langlands duality statement of Corollary \ref{cor:langlands}.

\appendix
\section{Folding}\label{SectFolding}
Let $\Delta$ be an irreducible Dynkin diagram. 
We define the associated symmetry group of $\Delta$ by%\footnote{It is a useful convention to set $AS(\Delta)=1$ for $\ADE$-Dynkin diagrams 
%The case where $\Delta$ is an $\ADE$-Dynkin diagram does not appear in \cite{Slo}. But this is a useful convention, cf. Remark \ref{remOnAS} and Remark \ref{remOnAS2}.}
\begin{equation}\label{ASDelta}
\Cc:=\Cc(\Delta):=
\begin{cases}
1, & \Delta\mbox{ is of type }\ADE, \\
\Z/2\Z, & \Delta\mbox{ is of type }\mathrm{B}_{k}, \mathrm{C}_{k}, \mathrm{F}_4, \\
S_3, & \Delta\mbox{ is of type }\mathrm{G}_2,
\end{cases}
\end{equation}
for $k\geq 2$. 
There is a unique irreducible $\ADE$-Dynkin diagram $\Delta_h$ with $\Cc\subset \Aut(\Delta_h)$ and $\Delta=\Delta_h^\Cc$.
Here $\Delta_h^\Cc$ stands for the Dynkin diagram which is obtained by taking $\Cc$-invariants in the root space associated with $\Delta_h$. 
Dually, taking $\Cc$-coinvariants gives the Dynkin diagram $\Delta^\vee=\Delta_{h,\Cc}$. 
Both procedures are called \emph{folding}, cf. Figure \ref{Figure1}.

On the level of simple complex Lie groups, folding goes as follows (\cite[Chapter 10.3.]{Springer}): 
Let $G_h$ be a simple complex Lie group with character and cocharacter lattice $\bLambda_{G_h}^\vee\subset \mathsf{weight}_{\gfr_h}$ and $\bLambda_{G_h}\subset \mathsf{coweight}_{\gfr_h}$ respectively.
The subgroup $\Cc\subset \mathrm{Aut}(\Delta_h)$ yields the outer automorphism group $\Cc\subset \mathrm{Aut}(G_h)$. 
Then the folded group $G:=(G_h^\Cc)^\circ$, i.e. the connected component of the fixed point group $G_h^\Cc$, is uniquely determined by the character and cocharacter lattice
\begin{equation*}
\bLambda_G^\vee=\bLambda_{G_h,\Cc},\quad \bLambda_G=\bLambda_{G_h}^\Cc.
\end{equation*}
Note that the Dynkin diagram of $\bLambda_G^\vee$, hence the one of $G$, is $\Delta^\vee=\Delta_{h,\Cc}$ whereas $\bLambda_G$ has Dynkin diagram $\Delta=\Delta_{h,\Cc}$. 

For convenience, we summarize how all irreducible Dynkin diagram of type $\mathrm{B}_k$, $\mathrm{C}_k$, $\mathrm{F}_4$, $\mathrm{G}_2$ (\emph{$\BCFG$-Dynkin diagrams for short}) are obtained via folding:

\begin{equation}\label{FoldingDynkin}
\begin{array}{c|c|c|c} 
\Delta_h & \Cc & \Delta^\vee=\Delta_{h,\Cc} & \Delta=\Delta_h^\Cc   \\ \hline
 \mathrm{A}_{2k-1} & \Z/2\Z & \mathrm{C}_{k} & \mathrm{B_{k}}   \\ 
 \mathrm{D}_{k+1} &   \Z/2\Z & \mathrm{B}_{k} & \mathrm{C}_{k}    \\ 
\mathrm{E}_6  & \Z/2\Z & \mathrm{F}_4 & \mathrm{F}_4   \\ 
 \mathrm{D}_4 & S_3 & \mathrm{G}_2 & \mathrm{G}_2\\ 
\end{array}
\end{equation}

\bibliographystyle{plain}

\bibliography{bibtex1}

\end{document}